\documentclass[final,11pt,3p]{elsarticle}
\usepackage[T1]{fontenc}
\usepackage[english]{babel}
\usepackage{amssymb,amsmath,amsthm}
\usepackage{MnSymbol}
\usepackage{array}   
\newcolumntype{C}{>{$}c<{$}} 
\usepackage{mathtools}
\usepackage{caption}
\usepackage{xcolor}
\usepackage{graphicx}
\usepackage{amssymb}
\usepackage{float}
\usepackage{rotating}
\usepackage{amsthm}
\usepackage{theoremref}
\usepackage{multicol}
\usepackage{multirow}
\usepackage{array}
\usepackage{mathrsfs}
\usepackage[mathcal]{eucal}
\usepackage{makecell}
\usepackage{amstext}
\usepackage[pdf]{graphviz}
\usepackage{longtable}
\newcolumntype{L}{>{$}l<{$}}
\numberwithin{equation}{section}
\theoremstyle{plain}
\newtheorem{theorem}{Theorem}[section]

\newtheorem{proposition}[theorem]{Proposition}
\newtheorem{lemma}[theorem]{Lemma}
\newtheorem{remark}[theorem]{Remark}

\theoremstyle{definition}

\newtheorem{definition}[theorem]{Definition}

\everymath{\displaystyle}
\usepackage{lineno}
\usepackage{changepage}
\newcounter{matcount}
\newcommand{\matitem}[1]{%
  \stepcounter{matcount}%
  (\thematcount)\;
  #1
}
\makeatletter
\def\ps@pprintTitle{%
 \let\@oddhead\@empty
 \let\@evenhead\@empty
 \def\@oddfoot{}%
 \let\@evenfoot\@oddfoot}
\makeatother
\begin{document}
\begin{frontmatter}
\title{Varieties of nilpotent Jordan superalgebras of dimension five} 
\author[1]{Isabel Hern\'andez}
\ead{isabel@cimat.mx}
\author[2]{Laiz Valim da Rocha\corref{cor1}}
\ead{laizvalim@gmail.com}
\author[3]{Rodrigo Lucas Rodrigues}
\ead{rodrigo@mat.ufc.br}
\cortext[cor1]{Corresponding author}
\affiliation[1]{organization={Secretar\'{\i}a de Ciencia,  Humanidades, Tecnolog\'{\i}a e Innovaci\'on and Centro de Investigaci\'on en Matem\'aticas, Unidad M\'erida},
city={M\'erida},
postcode={97302},
country={Mexico}}
\affiliation[2]{organization={Instituto de Matem\'atica e Estat\'{\i}stica, Universidade de S\~ao Paulo, Cidade Universitária},
city={S\~ao Paulo},
postcode={05508-090},
country={Brazil}}
\affiliation[3]{organization={Departamento de Matem\'atica, Universidade Federal do Cear\'a, Campus do Pici, Bloco 914},
city={Fortaleza},
postcode={60440-900},
country={Brazil}}
\begin{abstract}
 The paper is devoted to the description of the varieties of complex 5-dimensional nilpotent Jordan superalgebras. We find all representatives for the isomorphism classes, using the Jordan normal form, results of simultaneous matrix triangularization, the Jordan-Kronecker theorem for a pair of skew-symmetric bilinear forms and similar arguments developed for $\Delta$-modules by Burde and Grunewald. We also provide a complete geometric classification, determining the irreducible components of the corresponding varieties and describing all possible degenerations and non-degenerations between these superalgebras, in particular, applying some $\mathbb{Z}_2$-graded subspaces as invariants. 
\end{abstract}
\begin{keyword}
Nilpotent Jordan superalgebras \sep classification of superalgebras \sep orbit closures \sep degenerations \sep rigidity.
\MSC[2020] 17B30 \sep 17C55 \sep 14J10.
\end{keyword}
\end{frontmatter}

\section{Introduction}
The classification of different classes of algebras is a fundamental and very active area of investigation. Even in low dimensions, knowing the list of all non-isomorphic algebras within a given class, through their algebraic classification, is important for understanding their structure, testing conjectures, and producing concrete examples.

For varieties of algebras defined by polynomial identities, another perspective that has gained increasing attention is offered by geometric classification. It frequently involves the study of degenerations or deformations between algebras in the variety under consideration, as well as the identification of rigid ones. The main byproduct of such analysis is the description of their irreducible components. A compilation of results concerning algebraic and geometric classification of several varieties of algebras can be found in \cite{Kay1} and \cite{Kay2}.

Let us consider the variety of Jordan algebras. Introduced in the 1930s \cite{introJor} as part of an effort to formalize the algebraic properties of observables in quantum mechanics, they have been extensively studied since then, establishing connections with distinct areas of mathematics, including Lie theory, projective geometry, and functional analysis (see \cite{Taste}). Since these algebras are described by means of polynomial identities, it is natural to consider the classification problem in the algebraic and geometric perspectives. 

Over an algebraically closed field, the following results are known. In 2005, the variety of Jordan algebras of dimension 3 in characteristic different from 2 was described in \cite{IryShes}. In 2011, the subvariety of nilpotent Jordan algebras of dimensions 3 and 4 was determined in the field of complex numbers in \cite{Anc}. This result was extended in 2014 by \cite{AlgJor4}, where Jordan algebras were classified algebraically, up to dimension four, in characteristic different from 2. Their geometric classification was provided in \cite{Geo4Jor} in the same year. In 2016, the algebraic classification of the nilpotent Jordan algebras of dimension $5$ in characteristic different from 2 was established in \cite{Hegazi}, while the geometric classification was given in 2018 \cite{Geo5Jor}. To our knowledge, this remains the highest dimension in which the subvariety of nilpotent Jordan algebras has been described.

One can also address the classification problem in the context of Jordan superalgebras. In this case, it becomes even more challenging, since several classical results no longer hold (for instance, solvability does not imply nilpotency, even in characteristic different from $2$, see \cite{superalgebrasandcounter}) and an additional difficulty arises from the need to treat separately the different possible dimensions of the graded components. Consequently, studies in this direction are still scarce. However, some recent progress has been achieved for Jordan superalgebras.

In 2017, the algebraic classification of Jordan superalgebras of dimension up to three
over an algebraically closed field of characteristic distinct from 2 was provided in \cite{treeJorsup}. In 2018, degenerations between Jordan superalgebras of dimension $3$ over the field of complex numbers were described in \cite{DegJorsup}. In 2021, a concrete list of non-isomorphic commutative power-associative superalgebras up to dimension 4 over an algebraically closed field of characteristic prime to 30 was provided in \cite{Power}. As a byproduct, the authors obtained the algebraic classification of Jordan superalgebras of dimension $4$. In 2025, the variety of Jordan superalgebras of dimension 4 over an algebraically closed field of characteristic $0$ whose even part has dimension 1 or 3 was described in \cite{4JorsupI}. In \cite{SupJor4}, the authors described all irreducible components of complex  four-dimensional Jordan superalgebras. In \cite{4JorsupII} the variety of Jordan superalgebras of dimension $4$ whose even part has dimension $2$ over an algebraically closed field  of characteristic $0$ was described. Finally, in \cite{completeKay}, the authors presented a short note to answer the open questions posted in \cite{4JorsupI} and \cite{4JorsupII}. Recently, a relevant application of the classification of Jordan superalgebras was presented in \cite{classrightaltsup}, where right alternative superalgebras of dimension 3 are classified using the Jordan superalgebras of the same dimension.

In this paper, we focus on the subvariety of nilpotent Jordan superalgebras and extend their algebraic and geometric classification to dimension 5 over the complex numbers field $\mathbb{C}$. This is the first dimension in which infinite families of non-isomorphic nilpotent Jordan superalgebras appear, which occur precisely when the even part has dimension 2. In the corresponding variety, two of the five irreducible components are given by  the Zariski closure of a union of orbits of an infinite family of superalgebras. 

The paper is organized as follows. Section \ref{prel} contains the necessary preliminaries on Jordan superalgebras, introduces the variety of Jordan superalgebras $\mathcal{JS}^{(m,n)}$ and its nilpotent subvariety $\mathcal{NJS}^{(m,n)}$. In this section, the notion of degeneration between superalgebras in this variety is also established and some useful invariants are indicated to determine when a degeneration does not occur. In particular, the $\mathbb{Z}_{2}$-graded versions of two well-known invariants are presented, namely the graded dimensions of the annihilator and the associative center of a superalgebra. We note that these invariants can also be applied to other classes of superalgebras.

In Section \ref{classalg}, we describe the method used to classify nilpotent Jordan superalgebras, and apply it to obtain a complete algebraic classification. At this stage, it is necessary to adopt distinct strategies for describing the action of the even part on the odd part, depending on their respective dimensions, in order to simplify the computations. A further difficulty that arises when the even part has dimension $2$ is to determine all the non-isomorphic superalgebras in the case that such action is trivial. This problem is overcome due to the Jordan-Kronecker theorem, for which a more elementary proof can be found in \cite{Kronecker}.

Finally, Section \ref{classgeo} is devoted to the geometric classification. We show the technique used to prove degenerations and how to proceed when dealing with a family of superalgebras. Then, we identify all possible degenerations and justify all cases in which degeneration does not occur. We highlight that the invariants presented in Section \ref{prel} are sufficient to identify all rigid superalgebras. Nevertheless, for a complete classification, we follow the strategy used in \cite{seeley} and \cite{SupJor4} to prove non-degeneration of the only remaining case where the invariant criteria cannot be applied. The results are summarized in Table \ref{table:ndegcaso14} to Table \ref{table:degcaso23}. We also include the Hasse diagram of the corresponding variety as a visual tool to illustrate the complete geometric classification. Consequently, the irreducible components are determined.

\section{Preliminaries}\label{prel}
Throughout this paper, we work over the field of complex numbers $\mathbb{C}$. An algebra is called a \emph{Jordan algebra} if it satisfies the identities $xy=yx$ (commutativity) and $(x,y,x^2)=0$ (the Jordan identity), where $(x,y,z)=(xy)z-x(yz)$ denotes the associator of the elements $x,y,z$. Over the field $\mathbb{C}$, the Jordan identity is equivalent to its complete linearization, given by
\[
(wx)(yz) + (wy)(xz) + (wz)(xy) - x(w(yz)) - y(w(xz)) - z(w(xy)) = 0.
\]

A superalgebra $A=A_{0}\oplus A_{1}$ is a $\mathbb{Z}_{2}$-graded algebra, that is, $A_{0}$ and $A_{1}$ are vector subspaces of $A$ (called the even and odd parts of $A$, respectively) such that $A_{i}\cdot A_{j}\subseteq A_{i+j \ \text{mod} \ 2}$. A nonzero element $x \in A_{0}\cup A_{1}$ is said to be homogeneous and we denote by $|x|$ its parity, where $|x|=i$ if $x\in A_{i}$, $i\in\mathbb{Z}_{2}$.

Let $G$ be the Grassmann algebra generated by the elements $1,e_1,\dots,e_{n},\dots$ satisfying $e_{i}^{2}=0$ and $e_{i}e_{j}=-e_{j}e_{i}$. It is easy to see that it has a superalgebra structure with the grading $G=G_{0}\oplus G_{1}$, where $G_{0}$ is spanned by 1 and by the products of an even number of $e_{i}$'s, and $G_{1}$ is spanned by the products of an odd number of $e_{i}$'s. For a superalgebra $A=A_{0}\oplus A_{1}$, we define its \emph{Grassmann envelope} as $G(A)=G_{0}\otimes A_{0} \oplus G_{1}\otimes A_{1} $, where the multiplication is
given by $(x\otimes u)(y\otimes v)=xy\otimes uv,$ 
for every $x \otimes u \in G_{i}\otimes A_{i}, y\otimes v \in G_{j}\otimes A_{j}$, where $i,j \in \{0,1\}$.

Let $\mathcal{V}$ be a variety of algebras defined by a set of multilinear identities (see \cite{Osborn} and \cite{Nearlyasso}). A superalgebra $A=A_{0}\oplus A_{1}$ is called a $\mathcal{V}$-superalgebra if its Grassmann envelope $G(A)$ lies in $\mathcal{V}$. 

\begin{definition}\label{defsup} A \emph{Jordan superalgebra} is a superalgebra $J=J_{0}\oplus J_{1}$ satisfying the following superidentities:
\begin{equation}\label{supcomm}
  xy=(-1)^{|x||y|}yx, \ \text{(supercommutativity)}
\end{equation}
and the Jordan superidentity:
\begin{equation}\label{supidenjordan}
\begin{split}
(wx)(yz)+(-1)^{|x||y|}(wy)(xz)+(-1)^{(|x|+|y|)|z|}(wz)(xy) 
-(-1)^{|w||x|}x(w(yz))\\
-(-1)^{|y|(|w|+|x|)}y(w(xz))-(-1)^{|z|(|w|+|x|+|y|)}z(w(xy))=0,
\end{split}
\end{equation}
for all $x,y,z,w \in (J_{0}\cup J_{1}) \ \backslash \ \{0\}$. 
\end{definition}

The left-hand side of the superidentity \eqref{supidenjordan} will be denoted by $J^{s}(w;x,y,z)$. Using \eqref{supcomm}, it is simple to check that $J^{s}(w;x,y,z)=(-1)^{|x||y|}J^{s}(w;y,x,z)=(-1)^{|x|(|y|+|z|)}J^{s}(w;y,z,x)$.
Given two Jordan superalgebras $J=J_{0}\oplus J_{1}$ and $J^{\prime}=J_{0}^{\prime}\oplus J_{1}^{\prime}$, we say that $J$ and $J^{\prime}$ are \emph{isomorphic} if there exists an isomorphism $\Phi: J \to J^{\prime}$ of algebras that preserves the grading. It is convenient to view $\Phi$ as being a direct sum of its restrictions to the even and odd parts, that is, $\Phi=T\oplus S$, with $T=\Phi|_{J_{0}}$ and $S=\Phi|_{J_{1}}$.

Consider $x\in J$. The linear operator $L_{x}:J\rightarrow J$ given by $L_{x}(y)=xy$ is called the \emph{left multiplication operator by $x$}. Note that if $x\in J_{0}$, then $J_{1}$ is invariant under $L_{x}$.
Throughout the paper, we fix the following convention: if $x\in J_{0}$, $l_{x}$ denotes the restriction of $L_{x}$ to $J_{1}$, that is, $l_{x}=L_{x}|_{J_{1}}$. 

Our work will be organized according to the dimensions of the even and odd parts. Therefore, we distinguish a Jordan superalgebra $J=J_{0}\oplus J_{1}$ as being of type $(m,n)$ if $\mathrm{dim}\,J_{0}=m$ and $\mathrm{dim}\,J_{1}=n$. 

\subsection{The variety $\mathcal{JS}^{(m,n)}$}
Let $V=V_{0}\oplus V_{1}$ be a $\mathbb{Z}_{2}$-graded vector space with a fixed homogeneous basis $\{e_1,\dots,e_{m},f_{1},\dots,f_{n}\}$. We can equip $V$ with a Jordan superalgebra structure by determining a set of structure constants $(c_{ij}^{k},\rho_{ij}^{k},\Gamma_{ij}^{k})\in \mathbb{C}^{m^{3}+2mn^{2}}$, where
\[e_{i}e_{j}=\sum_{k=1}^{m}c_{ij}^{k}e_k, \quad e_{i}f_{j}=\sum_{k=1}^{n}\rho_{ij}^{k}f_{k}, \quad f_{i}f_{j}=\sum_{k=1}^{m}\Gamma_{ij}^{k}e_{k},\]
satisfying the polynomial identities given by supercommutativity and the Jordan superidentity. It follows that the set of all Jordan superalgebras of type $(m,n)$ defines an affine variety in $\mathbb{C}^{m^{3}+2mn^{2}}$, denoted by $\mathcal{JS}^{(m,n)}$. In this way, each point $(c_{ij}^{k},\rho_{ij}^{k},\Gamma_{ij}^{k})\in \mathcal{JS}^{(m,n)}$ corresponds, in the fixed basis, to a Jordan superalgebra of type $(m,n)$ over $\mathbb{C}$.

Since isomorphisms between superalgebras preserve the grading, we consider the natural action of the group $G=\mathrm{GL}_{m}(\mathbb{C})\times\mathrm{GL}_{n}(\mathbb{C})$
on $\mathcal{JS}^{(m,n)}$ by change of basis. Observe that the orbits of this action correspond precisely to the isomorphism classes of Jordan superalgebras of type $(m,n)$. Denote by $O(J)$ the $G$-orbit of $J\in \mathcal{JS}^{(m,n)}$ and by $\overline{O(J)}$ the Zariski closure of $O(J)$. It is well-known that the dimension of $O(J)$ is given by the formula:  
\[\mathrm{dim}\, O(J)= \mathrm{dim}\, G - \mathrm{dim} \,\mathrm{Aut}(J),\]
where $\mathrm{dim}\, G=m^{2}+n^{2}$. 

Let $J,J^{\prime}\in\mathcal{JS}^{(m,n)}$, we say that $J$ \emph{degenerates} to $J^{\prime}$, denoted by $J\rightarrow J^{\prime}$, if $J^{\prime}\in \overline{O(J)}$. In this case, the entire orbit $O(J^{\prime})$ lies in $\overline{O(J)}$.

The concept of degeneration is closely related to the notion of deformation, as considered in \cite{4JorsupI}, in the sense that $J$ degenerates to $J^{\prime}$ corresponds to $J^{\prime}$ deforms into $J$.

Let $J\in \mathcal{JS}^{(m,n)}$. The superalgebra $J$ is called \emph{rigid} if $O(J)$ is a Zariski open subset of $\mathcal{JS}^{(m,n)}$. This notion plays a fundamental role in the study of affine varieties, as the Zariski closure of an open orbit determines an irreducible component of the variety. Moreover, it is well known that any affine variety can be decomposed into a finite number of irreducible components.

\subsection{Invariants}
Given two Jordan superalgebras $J$ and $J^{\prime}$, it may be difficult to determine whether $J$ degenerates to $J^{\prime}$. For this reason, a series of invariants is used to establish conditions under which such a relation may hold. As a consequence, we obtain helpful criteria for the non-existence of degenerations. The invariants listed in the next lemma have already been applied in the study of the varieties of Lie and Jordan superalgebras, see \cite{DegLiesup} and \cite{DegJorsup}. 

First, for a Jordan superalgebra $J=J_{0}\oplus J_{1}$ with structure constants $(c_{ij}^{k},\rho_{ij}^{k},\Gamma_{ij}^{k})$, we denote by $a(J)$ and $F(J)$ the Jordan superalgebras with structure constants $(0,0,\Gamma_{ij}^{k})$ and $(c_{ij}^{k},\rho_{ij}^{k},0)$, respectively. We also define the powers $J^{k}$ inductively by setting
$J^{1}=J, \ J^{k+1}=J^{k}J+J^{k-1}J^{2}+\cdots+JJ^{k}$ for $k\geq 1$. For each $k$, the subspace $J^{k}$ is an ideal of $J$, and we can write $J^{k}=(J^{k})_{0}\oplus (J^{k})_{1}$, where the grading is induced by that of $J$.
\begin{lemma}\label{invariants}
Let $J,J^{\prime}\in\mathcal{JS}^{(m,n)}$. If $J\rightarrow J^{\prime}$, then the following conditions must be satisfied:
\begin{enumerate}[i)]
    \item \label{item:aut} $\mathrm{dim}\,\mathrm{Aut}(J)<\mathrm{dim}\,\mathrm{Aut}(J^{\prime})$.
    \item \label{item:J0} $J_{0}\rightarrow J_{0}^{\prime}$.
    \item $a(J)\rightarrow a(J^{\prime})$.
    \item \label{item:functor} $F(J)\rightarrow F(J^{\prime})$.
    \item \label{item:dimpower} $\mathrm{dim}(J^{k})_{i}\geq \mathrm{dim}({J^{\prime }}^{k})_{i}$, for $i\in \mathbb{Z}_{2}$. 
    \item \label{item:PI} If $J$ is associative, then $J^{\prime}$ is also associative. Moreover, if $J$ satisfies a polynomial identity, then $J^{\prime}$ satisfies the same polynomial identity.
\end{enumerate}
\end{lemma}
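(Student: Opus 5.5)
Note first that item \ref{item:aut} needs $J\not\cong J'$: for a trivial degeneration ($J'\in O(J)$) the two automorphism groups are conjugate and have equal dimension, so the strict inequality cannot hold. We therefore read the lemma for proper degenerations, $J'\in\overline{O(J)}\setminus O(J)$, where item \ref{item:aut} is in any case the one that matters.

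The common tool is the following principle. If a subset $Z\subseteq\mathcal{JS}^{(m,n)}$ is Zariski closed and $G$-stable, and $J\in Z$, then $\overline{O(J)}\subseteq Z$, so $J'\in Z$. Each item will be obtained by exhibiting such a $Z$, or a $G$-equivariant morphism to another variety.

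For item \ref{item:aut} we use the orbit dimension formula $\dim O(J)=\dim G-\dim\mathrm{Aut}(J)$ recalled above. Since $O(J)$ is the image of the irreducible group $G$ under the orbit map, it is locally closed and irreducible, hence open and dense in $\overline{O(J)}$. Therefore $\overline{O(J)}\setminus O(J)$ is a proper closed subset of the irreducible variety $\overline{O(J)}$ and has strictly smaller dimension. If the degeneration is proper, $O(J')\subseteq\overline{O(J)}\setminus O(J)$, so $\dim O(J')<\dim O(J)$. This gives $\dim\mathrm{Aut}(J)<\dim\mathrm{Aut}(J')$.

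For items \ref{item:J0}, iii) and \ref{item:functor} we use morphisms defined directly on structure constants.
\begin{itemize}
\item The projection $\pi_0:(c,\rho,\Gamma)\mapsto c$ sends $\mathcal{JS}^{(m,n)}$ into the variety of $m$-dimensional Jordan algebras, since the even part of a Jordan superalgebra is a Jordan algebra.
\item The maps $(c,\rho,\Gamma)\mapsto(0,0,\Gamma)$ and $(c,\rho,\Gamma)\mapsto(c,\rho,0)$ land in $\mathcal{JS}^{(m,n)}$. One checks this from the Jordan superidentity: every term of \eqref{supidenjordan} is a product of three structure constants, so we split $J^s$ by parity type. With only $\Gamma$ nonzero, every double product vanishes. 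With $\Gamma=0$, the identities that remain are exactly those of $J$ not involving $\Gamma$.
\item Each of these maps is continuous and equivariant: for $\pi_0$ the $\mathrm{GL}_m\times\mathrm{GL}_n$ action on $\mathcal{JS}^{(m,n)}$ corresponds to the $\mathrm{GL}_m$ action on Jordan algebras, and the other two maps commute with the $G$-action because the action transforms $c$, $\rho$ and $\Gamma$ separately.
\end{itemize}
Given an equivariant morphism $\phi$, continuity gives $\phi(\overline{O(J)})\subseteq\overline{\phi(O(J))}=\overline{O(\phi(J))}$, and this yields the three degenerations. The verification that $a$ and $F$ preserve the superidentity is the main computational point, though it is routine.

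For item \ref{item:dimpower}, the set of structure constants with $\dim(J^k)_i\le d$ is closed. Indeed, $(J^k)_i$ is spanned by the images of fixed polynomial expressions in the basis vectors, so the condition is the vanishing of the minors of size $d+1$ of a matrix whose entries are polynomial in the structure constants. This set is also $G$-stable, and the principle above gives the inequality. For item \ref{item:PI}, the zero set of the polynomial equations given by an identity, for example the associator equations, is closed and $G$-stable, so the same principle applies.
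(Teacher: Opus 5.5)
The paper gives no proof of this lemma; it only refers to earlier work on Lie and Jordan superalgebras where these invariants were used. Your argument is the standard one (closed $G$-stable subsets, the orbit-dimension formula, and $G$-equivariant projections of the structure constants), and it is correct.

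Your caveat on item i) is justified. With the paper's definition, $J\rightarrow J$ always holds, so the strict inequality is only valid when $J'\notin O(J)$. That is the only way the paper uses it.

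The one step you state too quickly is that $F(J)$ lies in $\mathcal{JS}^{(m,n)}$. Setting $\Gamma=0$ leaves only identities that already hold in $J$ because of a homogeneity property you should state explicitly. For a fixed parity pattern of $(w,x,y,z)$, every monomial in every coordinate of $J^{s}(w;x,y,z)$ contains the same number of $\Gamma$'s:
\begin{itemize}
\item none if at most one argument is odd;
\item one if two or three arguments are odd;
\item two if all four are odd.
\end{itemize}
So no identity mixes $\Gamma$-free and $\Gamma$-containing terms.

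A quicker route, which also gives more, uses one-parameter changes of basis:
\begin{itemize}
\item the basis $\{e_i,\,tf_j\}$ turns $(c,\rho,\Gamma)$ into $(c,\rho,t^{2}\Gamma)$;
\item the basis $\{t^{2}e_i,\,tf_j\}$ turns it into $(t^{2}c,t^{2}\rho,\Gamma)$.
\end{itemize}
Letting $t\to 0$ gives $J\rightarrow F(J)$ and $J\rightarrow a(J)$. In particular both lie in the closed set $\mathcal{JS}^{(m,n)}$, with no need to inspect the superidentity.
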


In addition, we set $\mathrm{Ann}(J)=\{a\in J \mid aJ=0\}$ and $\mathrm{Z}(J)=\{a\in J \mid (a,J,J)=(J,a,J)=(J,J,a)=0\}$ as the annihilator and the associative center of $J$, respectively. It is straightforward to verify that $\mathrm{Ann}(J)$ and $\mathrm{Z}(J)$ are $\mathbb{Z}_2$-graded subspaces of $J$ (moreover, $\mathrm{Ann}(J)$ is an ideal of $J$). In this way, we write $\mathrm{Ann}(J)=(\mathrm{Ann}(J))_{0}\oplus(\mathrm{Ann}(J))_{1}$ and $\mathrm{Z}(J)=(\mathrm{Z}(J))_{0}\oplus (\mathrm{Z}(J))_{1}$.

This motivates us to derive graded versions of the conditions given in \cite[Fact 4]{Geo4Jor} and \cite[Theorem 9, item (4)]{Geo5Jor}, whose proofs can be directly adapted to the graded case.

\begin{lemma}\label{newinvariants}
Let $J,J^{\prime}\in\mathcal{JS}^{(m,n)}$. If $J\rightarrow J^{\prime}$, then the following conditions must be satisfied:
\begin{enumerate}[i)]
    \item $\mathrm{dim}(\mathrm{Ann}(J))_{i}\leq \mathrm{dim}(\mathrm{Ann}(J^{\prime}))_{i}$, for $i \in \mathbb{Z}_{2}$.
    \item $\mathrm{dim}(\mathrm{Z}(J))_{i}\leq \mathrm{dim}(\mathrm{Z}(J^{\prime}))_{i}$, for $i \in \mathbb{Z}_{2}$.
   \end{enumerate}
\end{lemma}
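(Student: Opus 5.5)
The plan is the standard semicontinuity argument: each graded piece of $\mathrm{Ann}(J)$ and of $\mathrm{Z}(J)$ is the kernel of a linear map whose matrix depends polynomially (here linearly) on the structure constants. For a fixed point $\mu=(c_{ij}^{k},\rho_{ij}^{k},\Gamma_{ij}^{k})\in\mathcal{JS}^{(m,n)}$, the subspace $(\mathrm{Ann}(J_\mu))_0$ is the set of $a=\sum_i a_ie_i\in V_0$ with $ae_j=0$ and $af_j=0$ for all $j$. These conditions form a homogeneous linear system in $(a_1,\dots,a_m)$ whose coefficients are entries of $\mu$. Write $M_0(\mu)$ for the coefficient matrix, so that $\dim(\mathrm{Ann}(J_\mu))_0=m-\operatorname{rank}M_0(\mu)$. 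The odd part is handled the same way with a matrix $M_1(\mu)$, using the conditions $bf_j=0$, $be_j=0$ for $b\in V_1$. Because the annihilator is a graded subspace, it is enough to treat homogeneous elements, so each dimension is computed separately.

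For the associative center, $a\in V_i$ lies in $(\mathrm{Z}(J_\mu))_i$ exactly when all associators $(a,u,v)$, $(u,a,v)$, $(u,v,a)$ vanish for basis vectors $u,v$. This is again a homogeneous linear system in the coordinates of $a$, now with coefficients that are quadratic polynomials in $\mu$. Call its matrix $N_i(\mu)$; then $\dim(\mathrm{Z}(J_\mu))_i=\dim V_i-\operatorname{rank}N_i(\mu)$.

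The key step is that for any polynomial matrix $P(\mu)$ and any $r$, the set $\{\mu:\operatorname{rank}P(\mu)\le r\}$ is Zariski closed, since it is cut out by the $(r+1)$-minors. Two further facts are needed.

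\begin{itemize}
\item The dimensions above are $G$-invariant, since an isomorphism $\Phi=T\oplus S$ maps $\mathrm{Ann}$ and $\mathrm{Z}$ onto those of the image and preserves the grading.
\item Consequently, if $J=J_\mu$ and $r=\operatorname{rank}P(\mu)$, then the whole orbit $O(J)$ lies in the closed set $\{\operatorname{rank}P\le r\}$, and hence so does $\overline{O(J)}$.
\end{itemize}

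If $J\to J'$, then $J'\in\overline{O(J)}$. Therefore $\operatorname{rank}M_i(J')\le\operatorname{rank}M_i(J)$, which gives $\dim(\mathrm{Ann}(J'))_i\ge\dim(\mathrm{Ann}(J))_i$. The identical argument with $N_i$ yields (ii).

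The only points needing care are bookkeeping rather than real obstacles:
\begin{itemize}
\item The defining systems must be written over homogeneous elements, so that $(\mathrm{Ann}(J))_i$ and $(\mathrm{Z}(J))_i$ are exactly kernels. This relies on the gradedness noted before the lemma: for a general $a=a_0+a_1$, both $a\in\mathrm{Ann}(J)$ and $a\in\mathrm{Z}(J)$ split componentwise, because the products land in distinct homogeneous components.
\item The rank conditions are expressed in the fixed basis, so the argument is independent of the orbit representative.
\end{itemize}
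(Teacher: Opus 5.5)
Your proposal is correct and follows essentially the route the paper intends: the paper gives no separate proof, only noting that the ungraded arguments of the cited works adapt directly to the graded case, and that adaptation is exactly your semicontinuity argument carried out separately on each graded component. Since $(\mathrm{Ann}(J))_i$ and $(\mathrm{Z}(J))_i$ are kernels of matrices whose entries are polynomial in the structure constants, the condition $\operatorname{rank}\le r$ defines a Zariski-closed $G$-stable set containing $O(J)$, hence containing $\overline{O(J)}$, and this gives both inequalities.
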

\begin{remark}\label{nondegalg}
Let $J,J^{\prime}\in \mathcal{JS}^{(m,n)}$ be such that all odd products are zero. Then both have the structure of Jordan algebras. Moreover, if $J\nrightarrow J^{\prime}$ as algebras, then $J\nrightarrow J^{\prime}$ as superalgebras, since degeneration between superalgebras is more restrictive than degeneration between algebras.
\end{remark}
\subsection{The variety $\mathcal{NJS}^{(m,n)}$}

Let $J=J_{0}\oplus J_{1}$ be a Jordan superalgebra. We call $J$ \emph{nilpotent} if there exists a positive integer $k$ such that $J^{k}=0$. The least such number is called the \emph{nilindex} of $J$ and is denoted by $\mathrm{nilindex}(J)$. In particular, note that if $J$ is a nilpotent Jordan superalgebra, then $J_{0}$ is also a nilpotent Jordan algebra. 

Since the nilpotency condition can be expressed by polynomial equations in the structure constants, the set of all nilpotent Jordan superalgebras of type $(m,n)$ is an affine subvariety of $\mathcal{JS}^{(m,n)}$, denoted by $\mathcal{NJS}^{(m,n)}$. Consequently, all the notions and results presented in the previous section can be applied independently to this variety. Also, from item \ref{item:dimpower}) of Lemma \ref{invariants}, we have that if $J$ degenerates to $J^{\prime}$, then $\mathrm{nilindex}(J)\geq \mathrm{nilindex}(J^{\prime})$.

Our aim is to describe the varieties $\mathcal{NJS}^{(m,n)}$, with $m+n=5$. We start by establishing all nilpotent Jordan superalgebras of each type, up to isomorphism.

\section{The algebraic classification}\label{classalg}
\subsection{Types (5,0), (0,5) and (4,1)}
If $J=J_{0}\oplus J_{1}$ is a nilpotent Jordan superalgebra of type $(5,0)$, then it corresponds to a nilpotent Jordan algebra of dimension $5$, which has been algebraically classified in \cite{Hegazi}. On the other hand, if $J$ is of type $(0,5)$, then it has trivial multiplication, and $J$ is the unique superalgebra of this type.

Now, let $J=J_{0}\oplus J_{1}$ be a nilpotent Jordan superalgebra over $\mathbb{C}$ of type $(4,1)$, with $\{e_1,e_2,e_3,e_4$\} and $\{f_{1}\}$ as bases of $J_{0}$ and $J_{1}$, respectively. From the nilpotency of $J$, we see that each $L_{e_{i}}$ is a nilpotent operator for all $i\in\{1,\dots,4\}$. Since $\mathrm{dim}\, J_{1}=1$ and from the nilpotency of $L_{e_{i}}$, we have $e_{i}f_{1}=0$. Moreover, $f_{1}^{2}=0$. Thus, $J_{0}J_{1}=J_{1}J_{1}=0$, and the multiplication of $J$ is completely determined by that of $J_{0}$. Nilpotent Jordan algebras of dimension up to $4$ over the complex field were classified in \cite{Anc}. The same isomorphism classes were obtained in \cite{Hegazi} over an arbitrary algebraically closed field of characteristic different from $2$ and $3$. Throughout this paper, we adopt the notation of \cite{Hegazi}. Therefore, we have, up to isomorphism, the following nilpotent Jordan superalgebras:

{\footnotesize
\begin{longtable}{|l|l|l|l|l|l|}
\caption{Nilpotent Jordan superalgebras of type $(4,1)$ }\label{table:caso41} \\\hline
$J$&\text{multiplication table}&\text{Obs} & \text{nilindex} & $\mathrm{Aut}(J)$&  $\mathrm{Ann}(J)$ \\\hline
$(4,1)_1$  &$e_{i}e_{j}=0$ for $i,j\in\{1,\dots,4\}$. & \text{A}&$2$&$17$&$(4,1)$  \\ \hline 
$(4,1)_2$&$e_1^2=e_2$. & \text{A}&$3$&$11$&$(3,1)$  \\ \hline 
$(4,1)_3$& $e_1e_2=e_3$. &\text{A}&$3$&$9$&$(2,1)$\\\hline
$(4,1)_4$& $e_1^2=e_2, \ e_1e_2=e_3$.&\text{A}&$4$&$7$&$(2,1)$\\ \hline
$(4,1)_5$& $e_1e_2=e_4, \ e_3^2=e_4$. & \text{A}&$3$&$8$&$(1,1)$ \\ \hline
$(4,1)_6$&  $e_1^2=e_2, \ e_2e_3=e_4$. & \text{NA}&$4$&$5$&$(1,1)$\\ \hline
$(4,1)_7$& $e_1^2=e_2, \ e_1e_2=e_4, \ e_3^2=e_4$.& \text{A}&$4$&$6$&$(1,1)$\\ \hline
$(4,1)_8$& $e_1e_2=e_3, \ e_1e_3=e_4$. & \text{NA}& $4$&$6$&$(1,1)$\\ \hline
$(4,1)_9$& $e_1e_2=e_3, \ e_1e_3=e_4, \ e_2^2=e_4$.& \text{NA}& $4$&$5$&$(1,1)$\\ \hline
$(4,1)_{10}$& $e_1e_2=e_3, \ e_1e_3=e_4, \ e_2e_3=e_4$. & \text{NA}&$4$&$4$&$(1,1)$\\ \hline
$(4,1)_{11}$& $ e_1^2=e_2, \ e_1e_2=e_3, \ e_1e_3=e_4, \ e_2^2=e_4 $.&\text{A}&$5$&$5$&$(1,1)$\\ \hline
$(4,1)_{12}$& $e_1^2=e_3, \ e_1e_2=e_4$. & \text{A}&$3$&$8$&$(2,1)$ \\ \hline
$(4,1)_{13}$& $e_1^2=e_3, \ e_2^2=e_4 $.& \text{A} &$3$&$7$&$(2,1)$\\ \hline
\end{longtable}
}

Table \ref{table:caso41} presents additional information. In Column $3$ we write ``A'' if $J$ is an associative superalgebra and ``NA'' otherwise. Column $4$ indicates the nilindex of $J$, while Columns $5$ and $6$ show, respectively, the dimension of the automorphism group of $J$, denoted by $\mathrm{Aut}(J)$, and the type of the annihilator of $J$. Henceforth, we maintain this notation.

 For the remaining types, we adopt the following strategy: 
\begin{enumerate}[i)]
    \item The even part $J_{0}$ is fixed according to the classification of nilpotent Jordan algebras of dimension less than 4 obtained in \cite{Hegazi}  and presented in Table \ref{table:algjor4}. \label{step1}
    
\item We describe the nilpotent action of $J_{0}$ on $J_{1}$ in terms of a suitable basis, using Jordan's normal form or simultaneous triangularization. \label{step2}

\item We determine all the possible structures of nilpotent Jordan superalgebras in $J=J_{0}\oplus J_{1}$. \label{step3}

\item We find representatives for the isomorphism classes of such nilpotent Jordan superalgebras.
\end{enumerate}
\begin{small}

\begin{longtable}{|l|l|l|}
 \caption{Nilpotent Jordan algebras of dimension $\leq 3.$} \label{table:algjor4}\\ \hline
 $J_{0}$&multiplication table & dim $J_{0}$ \\ \hline
$J_{1,1}$&$e_{1}^{2}=0$.&$1$  \\ \hline
$J_{2,1}$&$e_{i}e_{j}=0, \ i,j\in\{1,2\}.$& $2$ \\ \hline
$J_{2,2}$&$e_{1}^{2}=e_{2}.$&2 \\ \hline
$J_{3,1}$&$e_{i}e_{j}=0, \ i,j\in\{1,2,3\}.$&$3$ \\ \hline
$J_{3,2}$&$   e_1^2=e_2$.& $3$ \\ \hline
$J_{3,3}$&$e_{1}e_{2}=e_{3}$.&3 \\ \hline
$J_{3,4}$&$e_{1}^{2}=e_{2}, \ e_{1}e_{2}=e_{3}$.&3 \\ \hline
\end{longtable}
\end{small}
We say that superalgebras obtained in step (\ref{step3}) are based on $J_{0}$. In order to guarantee that we obtain only Jordan superalgebras that are nilpotent in step (\ref{step3}), we use the fact that $L_{f}$, $f\in J_{1}$, is a nilpotent operator on $J$. Since the nilpotency of $L_{e}$, $e\in J_{0}$, is also assumed in steps (\ref{step1}) and (\ref{step2}), it follows from \cite[Theorem 2.5]{EngelJordan} that $J$ is nilpotent.

To determine representatives of the isomorphism classes of superalgebras based on $J_{0}$, we establish the following lemma. First, note that we can represent the product of odd elements by the skew-symmetric bilinear operator $\Gamma:J_{1}\times J_{1}\rightarrow J_{0}$ given by $\Gamma(f_{i},f_{j})=\textstyle\sum_{k=1}^{m}\Gamma_{ij}^{k}e_{k}$. Moreover, $\Gamma$ can be identified with $(\Gamma^{1},\dots,\Gamma^{m})$, where $\Gamma^{k}:J_{1}\times J_{1}\rightarrow \mathbb{C}$ is given by $\Gamma^{k}(f_{i},f_{j})=\Gamma_{ij}^{k}$. 

\begin{lemma}\label{iso14}
 Let $J$ and $J^{\prime}$ be Jordan superalgebras of type $(m,n)$ based on $ J_0$, written with respect to the homogeneous basis $\{e_{1},\dots,e_{m},\allowbreak f_{1},\dots,f_{n}\}$. Then $J\cong J^{\prime}$ if and only if there exist $T\in \mathrm{Aut}(J_{0})$ and an invertible linear map  $S:J_1 \to J_1^{\prime}$ such that:
 \begin{align}
  \sum_{i=1}^{m}T_{ik}l_{e_{i}}^{\prime}&=Sl_{e_{k}}S^{-1}, \label{isosimilar}\\  
  \Gamma^{\prime k}&=\sum_{i=1}^{m}T_{ki}(S^{-1})^{t}\Gamma^{i}S^{-1}, \ k\in \{1,\dots,m\}.\label{isocongruent}
 \end{align}
where $T(e_k)= \sum _{i=1}^m T_{ik}e_i$, $l_{e_{i}}^{\prime}:J_{1}^{\prime}\rightarrow J_{1}^{\prime}$ denotes the left multiplication by $e_{i}$ on $J_{1}^{\prime}$, and $\Gamma^{\prime }:J_{1}^{\prime}\times J_{1}^{\prime}\rightarrow J_{0}^{\prime}$ is the bilinear map defined above.
\end{lemma}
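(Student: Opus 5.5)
The plan is to unwind the definition of a graded isomorphism $\Phi=T\oplus S$ and translate each family of products into matrix conditions. Since $J$ and $J'$ are both based on the same even part $J_0$ with the same basis, a graded isomorphism $\Phi:J\to J'$ restricts to $T=\Phi|_{J_0}:J_0\to J_0$. Because $\Phi$ preserves products of even elements, $T$ is an automorphism of $J_0$. Conversely, given $T\in\mathrm{Aut}(J_0)$ and an invertible $S$, the map $\Phi=T\oplus S$ is a bijective graded linear map, and it automatically preserves the products $J_0J_0$. So the whole proof reduces to showing that $\Phi$ preserves the mixed products $J_0J_1$ exactly when \eqref{isosimilar} holds, and the odd products $J_1J_1$ exactly when \eqref{isocongruent} holds. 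Supercommutativity means there is no need to check the products $J_1J_0$ separately.

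For the mixed products, I will impose $\Phi(e_kf)=\Phi(e_k)\Phi(f)$ for all $f\in J_1$. The left side is $S\,l_{e_k}(f)$. The right side is $T(e_k)S(f)=\sum_i T_{ik}\,l'_{e_i}(S f)$. Equality for all $f$ reads
\[
S\,l_{e_k}=\Big(\sum_i T_{ik}\,l'_{e_i}\Big)S,
\]
and multiplying on the right by $S^{-1}$ gives \eqref{isosimilar}. Every step is reversible, so this gives an equivalence.

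For the odd products, I will write $\Phi(\Gamma(f,g))=\Gamma'(Sf,Sg)$ for all $f,g\in J_1$. Expanding the left side in the basis $e_k$ gives
\[
T\Big(\sum_i\Gamma^i(f,g)e_i\Big)=\sum_k\Big(\sum_i T_{ki}\Gamma^i(f,g)\Big)e_k .
\]
Comparing $e_k$-coordinates then gives $\Gamma'^k(Sf,Sg)=\sum_i T_{ki}\Gamma^i(f,g)$. Viewing each $\Gamma^i$ as a matrix, so that $\Gamma^i(f,g)=f^t\Gamma^i g$, and substituting $f\mapsto S^{-1}f$, $g\mapsto S^{-1}g$, I obtain
\[
\Gamma'^k=\sum_i T_{ki}(S^{-1})^t\Gamma^iS^{-1},
\]
which is \eqref{isocongruent}. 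Again each step is reversible, so this is also an equivalence.

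The main obstacle is only bookkeeping. The index conventions have to be kept straight: $T_{ik}$ appears in \eqref{isosimilar} but $T_{ki}$ in \eqref{isocongruent}, which is consistent with $T(e_i)=\sum_k T_{ki}e_k$. The matrix-representation convention for the bilinear forms also has to be fixed, so that the transpose lands on the correct factor. Once these conventions are fixed, both directions follow from the two equivalences above.
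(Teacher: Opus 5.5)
Your proposal is correct. The paper states this lemma without a written proof, and your argument is exactly the direct unwinding of a graded isomorphism $\Phi=T\oplus S$ that it leaves implicit: the index conventions ($T_{ik}$ in \eqref{isosimilar}, $T_{ki}$ in \eqref{isocongruent}) and the transpose in $(S^{-1})^t\Gamma^iS^{-1}$ are consistent with $T(e_k)=\sum_i T_{ik}e_i$ and $\Gamma^i(f,g)=f^t\Gamma^i g$, and reducing the converse to the homogeneous product types by bilinearity, with $J_1J_0$ handled by supercommutativity, is all that is needed.
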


The maps $T$ and $S$ in Lemma \ref{iso14} induce a change of basis under which the multiplication of $J$ with respect to the new basis corresponds to the multiplication of $J^{\prime}$ with respect to the basis $\{e_{1},\dots,e_{m},f_{1},\dots,f_{n}\}$. For simplicity, in what follows, we only exhibit the change of basis required to verify the isomorphisms, omitting the vectors that remain fixed.


\subsection{Type (1,4)}

\begin{theorem}
 Let $J=J_{0}\oplus J_{1}$ be a nilpotent Jordan superalgebra of type $(1,4)$. Then $J$ is isomorphic to one of the pairwise non-isomorphic superalgebras described in Table \ref{table:caso14}.  
\end{theorem}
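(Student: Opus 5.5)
The proof will follow the strategy fixed before Lemma~\ref{iso14}. Since $\dim J_0=1$, the only possibility from Table~\ref{table:algjor4} is $J_0=J_{1,1}$, with $e_1^2=0$. All of the structure is then carried by two objects: the nilpotent operator $l=l_{e_1}$ on the $4$-dimensional space $J_1$, and a single skew-symmetric form $\Gamma=\Gamma^1$ on $J_1$, where $f_if_j=\Gamma(f_i,f_j)e_1$.

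\emph{Constraints from the superidentity.} First I will write out the Jordan superidentity \eqref{supidenjordan} for each distribution of parities among $w,x,y,z$.
\begin{itemize}
\item For $w=x=y=e_1$, $z=f$, all terms with $e_1^2$ vanish and one gets $J^s(e_1;e_1,e_1,f)=-2l^3f$. Hence $l^3=0$, so the Jordan type of $l$ is one of the partitions $(1,1,1,1)$, $(2,1,1)$, $(2,2)$, $(3,1)$.
\item The identities $J^s(e_1;e_1,f,g)$, $J^s(f;e_1,g,h)$ and $J^s(f;g,h,k)$ with odd $f,g,h,k$ should give bilinear compatibility conditions between $l$ and $\Gamma$. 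I expect these to be of the form $\Gamma(lf,lg)=0$, together with linear relations in $\Gamma(l^af,l^bg)$, and relations such as $\Gamma(f,g)\,lh+\dots=0$ coming from $f(gh)$-type terms.
\item I will also impose nilpotency of each $L_f$, which by \cite[Theorem 2.5]{EngelJordan} guarantees nilpotency of $J$.
\end{itemize}

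\emph{Reduction of isomorphism and case analysis.} By Lemma~\ref{iso14}, with $T=t\in\mathbb{C}^*$ and $S\in\mathrm{GL}_4$, isomorphism amounts to
\[
t\,l'=SlS^{-1},\qquad \Gamma'=t\,(S^{-1})^t\Gamma S^{-1}.
\]
The first condition says $S$ conjugates $l$ to a scalar multiple of itself. Since $l$ is nilpotent, the scalar is absorbed: a diagonal $S$ rescales the Jordan blocks. So for each partition I put $l$ in Jordan normal form. The residual group is then generated by the centralizer of $l$ together with these diagonal rescalings, and it acts on $\Gamma$ by congruence times a scalar. For each partition I will:
\begin{itemize}
\item write a general skew $\Gamma$ in the Jordan basis (six parameters);
\item kill the parameters forced to vanish by the compatibility conditions;
\item use the residual group to normalize the survivors.
\end{itemize}
The partition $(1,1,1,1)$ ($l=0$) is easy, since skew forms are classified by rank: $0$, $2$ or $4$, with rank $4$ presumably excluded by nilpotency or the identity. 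The partition $(3,1)$ should be heavily constrained by $\Gamma(lf,lg)=0$.

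\emph{Main obstacle.} I expect the hardest step to be the cases $(2,1,1)$ and $(2,2)$ with $\Gamma\neq0$. There the centralizer is fairly large but not all of $\mathrm{GL}_4$, so orbits of skew forms have to be separated by hand. The tools will be:
\begin{itemize}
\item the rank of $\Gamma$;
\item the rank of $\Gamma$ restricted to $\ker l$ and to $\mathrm{Im}\, l$;
\item whether $\Gamma$ pairs $\mathrm{Im}\, l$ with $J_1$ nontrivially.
\end{itemize}
For each candidate I will try to exhibit an explicit $S$ reaching the normal form. If a continuous parameter appears to survive, I will first test whether rescaling $t$ together with block rescalings removes it.

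\emph{Pairwise non-isomorphism.} Finally I will show the superalgebras obtained are pairwise non-isomorphic using isomorphism invariants: the Jordan type of $l$, the rank of $\Gamma$, the graded type of $\mathrm{Ann}(J)$, associativity, nilindex, and $\dim\mathrm{Aut}(J)$. Where these coincide, I will appeal directly to the orbit argument above.
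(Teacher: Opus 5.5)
Your route is the paper's: take $J_0=J_{1,1}$, put $l_{e_1}$ in Jordan normal form, and exclude the $4$-block via $J^s(e_1;e_1,e_1,f)=-2l^3f$. Then, for each of the partitions $(1,1,1,1)$, $(2,1,1)$, $(2,2)$, $(3,1)$, impose the superidentity and nilpotency of the $L_f$, and normalize $\Gamma$.

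There is one genuine error. Your expectation that rank $4$ is excluded when $l=0$ is false, and acting on it would drop $(1,4)_3$ ($f_1f_2=f_3f_4=e_1$), which belongs to the table you are proving. When $l=0$ you have $J^2\subseteq\mathbb{C}e_1\subseteq\mathrm{Ann}(J)$, so $J^3=0$. Hence every term $(wx)(yz)$ or $x(w(yz))$ of $J^s$ vanishes, and $L_f^2=0$ for all $f\in J_1$. So $\Gamma$ is an arbitrary skew form, and all three congruence classes (ranks $0,2,4$) occur, giving $(1,4)_1,(1,4)_2,(1,4)_3$.

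A practical warning about the $(2,1,1)$ case. There $\mathrm{Im}\,l=\langle f_1\rangle$ is a line, so your predicted constraint $\Gamma(lf,lg)=0$ is vacuous. In fact all superidentities together only give $\omega_{12}\omega_{34}-\omega_{13}\omega_{24}+\omega_{14}\omega_{23}=0$. For example, taking only $\omega_{12}\neq0$ satisfies every superidentity, yet $L_{f_2}^2e_1=-\omega_{12}e_1$. It is nilpotency of $L_{f_2}$, $L_{f_2+f_3}$, $L_{f_2+f_4}$ that forces $\omega_{12}=\omega_{13}=\omega_{14}=0$, exactly as in the paper. After that, the cases $\Gamma=0$, $\Gamma\neq0=\omega_{34}$ and $\omega_{34}\neq0$ give $(1,4)_4$, $(1,4)_5$, $(1,4)_6$.

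The other cases are also easy. In the $(2,2)$ case, $J^s(f;e_1,f_2,f_4)=0$ gives $\Gamma(\mathrm{Im}\,l,J_1)=0$, leaving only $\omega_{24}$ and hence $(1,4)_8,(1,4)_9$. In the $(3,1)$ case everything vanishes, giving $(1,4)_7$. So the ``main obstacle'' you anticipate does not arise, and associativity, the type of $\mathrm{Ann}(J)$ and $\dim\mathrm{Aut}(J)$ already separate all nine superalgebras.
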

{\footnotesize
\begin{longtable}{|l|l|l|l|l|l|}
 \caption{Nilpotent Jordan superalgebras of type $(1,4)$}\label{table:caso14}\\\hline
 $J$&multiplication table & Obs & nilindex & $\mathrm{Aut}(J)$& $\mathrm{Ann}(J)$ \\\hline
 $(1,4)_1$& $e_{1}^{2}=e_{1}f_{i}= f_{i}f_{j}=0, \ i,j\in\{1,\dots,4\}$. & A & $2$ & $17$ & $(1,4)$ \\ \hline
 $(1,4)_2$&$f_1f_2=e_{1}$. & A&$3$ & $12$ &$(1,2)$ \\ \hline 
 $(1,4)_3$&$f_1f_2=e_{1}, \ f_3f_4=e_{1}$. & A&$3$&$11$&$(1,0)$ \\ \hline
 $(1,4)_4$& $e_{1}f_2=f_1$. & A &$3$ &$11$ &$(0,3)$ \\ \hline
 $(1,4)_{5}$& $e_{1}f_2=f_1, \ f_2f_3=e_{1}$.&NA&$4$&$9$&$(0,2)$\\\hline
 $(1,4)_{6}$&$e_{1}f_2=f_1, \ f_3f_4=e_{1}$.&NA&$4$ & $8$&$(0,1)$\\ \hline
  $(1,4)_{7}$&  $e_{1}f_2=f_1, \ e_{1}f_3=f_2$.& NA & $4$ & $7$&$(0,2)$ \\ \hline
 $(1,4)_{8}$&$e_{1}f_2=f_1, \ e_{1}f_4=f_3$.& A&$3$&$9$&$(0,2)$   \\ \hline 
 $(1,4)_{9}$&$e_{1}f_2=f_1, \ e_{1}f_4=f_3, \ f_2f_4=e_{1}$.  & NA&$4$&$8$ &$(0,2)$ \\ \hline

\end{longtable}
}
\begin{proof}
Since $\mathrm{dim}\,J_{0}=1$, there is just one possibility for $J_{0}$, namely $J_{0}=J_{1,1}$. Furthermore, since $l_{e_1}$ is nilpotent, there exists a basis $\mathcal{B}_{1}=\{f_{1},f_{2},f_{3},f_{4}\}$ of $J_{1}$ such that $[l_{e_1}]_{\mathcal{B}_{1}}$ can be written in Jordan normal form:
\[[l_{e_1}]_{\mathcal{B}_{1}}=\left(\begin{array}{cccc}
0 & \delta_{1} & 0 & 0\\
0 & 0 & \delta_{2} & 0 \\ 
0 & 0 & 0 & \delta_{3} \\
0 & 0 & 0 & 0\\
\end{array}\right), \ \delta_{1},\delta_{2},\delta_{3}\in\{0,1\}\]

 It remains to determine the products $f_{i}f_{j}=\omega_{ij}e_{1}$, $\omega_{ij}\in\mathbb{C}, \ 1\leq i<j\leq4$. Up to matrix similarity, the triple $\delta=(\delta_{1},\delta_{2},\delta_{3})$ can assume one of the following values:
\begin{enumerate}[1)]
    \item $\delta=(0,0,0)$:
    
In this case, the Jordan superidentity and the nilpotency of the $L_f$, $f \in J_1$, impose no restrictions on the parameters $\omega_{ij}$. Therefore, the map $\omega:J_1\times J_1\to \mathbb{C}$, given by $\omega(f_i,f_j)=\omega_{ij}$, is an arbitrary skew-symmetric bilinear form. By Lemma \ref{iso14}, it follows that the problem of finding representatives is equivalent to determining all possible skew-symmetric bilinear forms up to congruence. The complete description of such forms follows as a consequence of \cite[Chapter XV, Theorem 8.1]{Lang}. Hence, with a suitable change of basis in $J_1$, $J$ is isomorphic to one of the following: $(1,4)_1$, $(1,4)_2$ or $(1,4)_3$.    
\item $\delta=(1,0,0)$:
     
The nilpotency of $L_{f_{2}},\, L_{f_{2}+f_{3}}$ and $L_{f_{2}+f_{4}}$ gives us $\omega_{12}=\omega_{13}=\omega_{14}=0$, respectively, while the Jordan superidentity does not produce more conditions on $\omega_{ij}$. Hence, if $\omega_{23}=\omega_{24}=\omega_{34}=0$, we have $J\cong (1,4)_4$. Otherwise, there exists $\omega_{i_{0}j_{0}}\in \{\omega_{23},\omega_{24},\omega_{34}\}$ such that $\omega_{i_{0}j_{0}}\neq 0$. In this case, we analyze the value of $\omega_{34}$ to obtain non-isomorphic representatives. 
    \begin{enumerate}[a)]

\item  If $\omega_{34}=0$, we can take a new basis in $J_{1}$ obtained by the change of basis  $e_{1}^{\prime}\mapsto\omega_{24}e_{1}, \ f_{1}^{\prime}\mapsto \omega_{24}f_1,  \ f_{3}^{\prime}\mapsto f_{4}$, $ f_{4}^{\prime}\mapsto f_{3}-\omega_{23}\omega_{24}^{-1}f_{4}$, if $\omega_{24}\neq0$, or $e_{1}^{\prime}\mapsto \omega_{23}e_{1}$, $ f_{1}^{\prime}\mapsto \omega_{23}f_{1}$, if $\omega_{24}=0$, and we get $J\cong (1,4)_{5}$ in both cases.
        \item   If $\omega_{34}\neq 0$, making a change of basis given by $e_{1}^{\prime}\mapsto -\omega_{34}e_{1}, \ f_{1}^{\prime}\mapsto-\omega_{34}f_1, \ f_{2}^{\prime}\mapsto f_2-\omega_{24}\omega_{34}^{-1}f_3+\omega_{23}\omega_{34}^{-1}f_4, \ f_3^{\prime}\mapsto f_4$, $f_4^{\prime}\mapsto f_3$,  we have $J\cong (1,4)_{6}$.
    \end{enumerate}
    
\item $\delta=(1,1,0)$: 

The Jordan superidentity implies that $\omega_{ij}=0$ for every $i,j$. In fact, $J^{s}(e_{1};e_{1},f_2,f_3)=3\omega_{12}e_{1}$, $J^{s}(e_{1};e_{1},f_3,f_4)=-\omega_{14}e_{1}$, $J^{s}(e_{1};f_2,f_3,f_4)=-\omega_{24}f_1$, $J^{s}(f_3;e_{1},f_1,f_3)=-\omega_{13}f_1$, $J^{s}(f_3;e_{1},f_2,f_3)=-2\omega_{23}f_1+\omega_{13}f_{2}$ and $J^{s}(f_3;e_{1},f_3,f_4)=-\omega_{34}f_1$. Consequently, $J\cong (1,4)_{7}$.
  
\item $\delta=(1,0,1)$: 

    From the Jordan superidentity it follows that $\omega_{ij}=0$ for all $(i,j)$ such that $(i,j)\neq (2,4)$. In fact, $J^{s}(e_{1};e_{1},f_2,f_4)=2\omega_{13}e_{1}$, 
 $J^{s}(f_2;e_{1},f_2,f_4)=-\omega_{23}f_1+\omega_{12}f_{3}$ and $J^{s}(f_4;e_{1},f_2,f_4)=-\omega_{34}f_1+\omega_{14}f_{3}$. No further restrictions are required by the Jordan superidentity and the nilpotency of $L_f$ for any $f\in J_1$. Therefore, if $\omega_{24}=0$, $J\cong (1,4)_{8}$. Otherwise, after the change of basis defined by $e_{1}^{\prime}\mapsto \omega_{24}e_{1}$, $f_1^{\prime}\mapsto\omega_{24}f_1$, $f_3^{\prime}\mapsto \omega_{24} f_3$, we obtain $J\cong (1,4)_{9}$.
     
\item $\delta=(1,1,1)$: 

        Observe that $J^{s}(e_{1};e_{1},e_{1},f_4)=-2\delta_1\delta_2\delta_3f_1$, so we must have $\delta_1\delta_2\delta_3=0$, which implies that $\delta$ cannot take this value.
\end{enumerate}
\end{proof}

In what follows, we only display substitutions in $J^{s}(w;x,y,z)$ that yield non-trivial conditions for $J$ to admit a Jordan superalgebra structure. All remaining substitutions either vanish identically or lead to conditions already obtained.
\subsection{Type (3,2):}
\begin{theorem}
 Let $J=J_{0}\oplus J_{1}$ be a nilpotent Jordan superalgebra of type $(3,2)$. Then $J$ is isomorphic to one of the pairwise non-isomorphic superalgebras described in Table \ref{table:caso32}.  
\end{theorem}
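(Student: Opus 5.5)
We follow the four-step strategy of Section \ref{classalg}. The even part $J_{0}$ is one of the three-dimensional nilpotent Jordan algebras $J_{3,1},J_{3,2},J_{3,3},J_{3,4}$ of Table \ref{table:algjor4}, and we treat each in turn. Since $\dim J_{1}=2$ and every $l_{e_i}$ is nilpotent, we use simultaneous triangularization. The operators $l_{e_i}$ commute on $J_1$ up to the Jordan superidentity, and in any case they generate a nilpotent associative family by the nilpotency of $J$. Hence there is a basis $\{f_1,f_2\}$ of $J_1$ with $l_{e_i}(f_1)=0$ and $l_{e_i}(f_2)=\alpha_i f_1$ for $i=1,2,3$. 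The odd products reduce to a single vector $f_1f_2=\sum_k\gamma_k e_k$, because $f_if_i=0$ by supercommutativity. So each superalgebra is encoded by $(\alpha,\gamma)\in\mathbb{C}^3\times\mathbb{C}^3$.

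Next we impose the Jordan superidentity $J^{s}(w;x,y,z)=0$ on homogeneous quadruples. The key substitutions are:
\begin{itemize}
\item $J^{s}(e_i;e_j,e_k,f_2)$, which forces $\alpha$ to vanish on $J_0^2$ and, combined with the structure of $J_0$, on products. For example, in $J_{3,2}$ the relation $e_1^2=e_2$ together with $l_{e_1}^2=0$ gives $\alpha_2=0$.
\item $J^{s}(e_i;e_j,f_1,f_2)$ and $J^{s}(f_2;e_i,f_1,f_2)$, which link $\gamma$ with $\alpha$ and with the multiplication of $J_0$.
\end{itemize}
We also use nilpotency of $L_{f_2}$ and $L_{f_1+f_2}$ to exclude solutions where $f_1f_2$ acts non-nilpotently. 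The expected outcome for each $J_0$ is a finite list of constraints: $\gamma$ must lie in $\mathrm{Ann}(J_0)$, or be orthogonal to $\alpha$, or similar. This parallels the computations done for type $(1,4)$.

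For the isomorphism classes we apply Lemma \ref{iso14}. Here $S\in \mathrm{GL}_2$ must preserve the flag $\langle f_1\rangle$, by \eqref{isosimilar}. The vector $\alpha$ transforms as a linear functional on $J_0$ twisted by $T\in\mathrm{Aut}(J_0)$ and scaled by the diagonal entries of $S$. Since $\Gamma$ is a skew form on a two-dimensional space, \eqref{isocongruent} reduces to $\gamma\mapsto \det(S)^{-1}T\gamma$. We therefore classify pairs $(\alpha,\gamma)$, consisting of a functional and a vector, under $\mathrm{Aut}(J_0)\times$ (upper triangular $S$). We split according to whether $\alpha=0$ and to the position of $\alpha$ and $\gamma$ relative to $J_0^2$ and $\mathrm{Ann}(J_0)$, normalizing nonzero entries to $1$ by scaling. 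When $\alpha=0$, the classification is just of $\gamma$ up to $\mathrm{Aut}(J_0)$-orbits union $\{0\}$.

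The main obstacle is the case $J_0=J_{3,1}$ or $J_{3,3}$, where $\mathrm{Aut}(J_0)$ is large but the pair $(\alpha,\gamma)$ can still have an invariant: whether $\alpha(\gamma)=0$, and the relative position with respect to $J_0^2$. We must check that no continuous parameter survives, as is consistent with the paper's claim that families appear only in type $(2,3)$. To do this, we compute the orbits explicitly via stabilizers. Finally, to prove the representatives are pairwise non-isomorphic, we list for each one the isomorphism type of $J_0$, the graded type of $\mathrm{Ann}(J)$, associativity, nilindex and $\dim\mathrm{Aut}(J)$. These invariants separate the classes, and in any remaining tie we use the rank of $\alpha$ restricted to $\mathrm{Ann}(J_0)$.
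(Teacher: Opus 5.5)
Your overall route is the paper's: simultaneous triangularization, the parameters $(\alpha,\gamma)$ (the paper's $(\delta,\omega)$), the superidentity together with nilpotency of $L_{f_2}$, orbit analysis through Lemma \ref{iso14} with $\gamma\mapsto\det(S)^{-1}T\gamma$, and separation of classes by the invariants in Table \ref{table:caso32}. However, the key constraint you extract is false, and using it would make your list incomplete.

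Every $l_{e_i}$ is a strictly upper triangular $2\times 2$ matrix, so every product $l_al_b$ vanishes on $J_1$. Hence for $w,x,y\in J_0$ the identity $J^{s}(w;x,y,f_2)=0$ collapses to $l_{w(xy)}f_2=0$. Its only consequence is $\alpha(J_0J_0^{2})=0$. This says nothing for $J_{3,1},J_{3,2},J_{3,3}$, and gives only $\delta_3=0$ for $J_{3,4}$. The superidentity never forces $l_{a^2}$ to equal $l_a^2$; it gives relations such as $3l_{a^2}l_a=2l_a^3+l_{a^3}$. So $l_{e_1}^2=0$ does not yield $\alpha_2=0$ in $J_{3,2}$. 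Indeed, $(3,2)_{9}$ ($e_1^2=e_2$, $e_2f_2=f_1$) is a nilpotent Jordan superalgebra with $\alpha_2=1$. Imposing ``$\alpha|_{J_0^2}=0$'' would discard $(3,2)_{9}$, $(3,2)_{19}$ and $(3,2)_{27}$, the last two of which are even rigid. Your final list would then be strictly smaller than Table \ref{table:caso32}, and the completeness argument would rest on a false lemma.

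Relatedly, the two odd substitutions you single out carry no information in this type. $J^{s}(e_i;e_j,f_1,f_2)$ reduces to $(e_ie_j)\gamma-e_j(e_i\gamma)$, which vanishes for all four choices of $J_0$. $J^{s}(f_2;e_i,f_1,f_2)$ is identically zero, because $J_0f_1=0$ and $f_1^2=f_2^2=0$.

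The coupling between $\alpha$ and $\gamma$ comes from substitutions with odd $w$ and two even arguments:
\begin{itemize}
\item $J^{s}(f_2;e_i,e_j,f_2)=0$ gives $\alpha_j\,e_i\gamma+\alpha_i\,e_j\gamma=\alpha(e_ie_j)\gamma$;
\item $J^{s}(f_1;e_i,e_j,f_2)=0$ gives $\gamma J_0^{2}=0$, so $\omega_1=0$ for $J_{3,4}$ even when $\alpha=0$.
\end{itemize}
Add $\alpha(\gamma)=0$, which follows from $\operatorname{tr}(L_{f_2}^2)=0$. This is a forced condition for every algebra, not an orbit invariant as you suggest. Together these give exactly \eqref{eigenj31}, \eqref{sysJ32}, \eqref{sysJ33} and \eqref{sysJ34}.

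Your ``expected'' condition $\gamma\in\mathrm{Ann}(J_0)$ also fails when $\alpha=0$; see $(3,2)_{6}$, $(3,2)_{16}$, $(3,2)_{18}$ and $(3,2)_{25}$. With the correct systems in place, the rest of your plan goes through as in the paper.
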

\begin{small}
\begin{longtable}{|l|l|l|l|l|l|}
\caption{Nilpotent Jordan superalgebras of type $(3,2)$}\label{table:caso32}\\ \hline
$J$&\text{multiplication table}&\text{Obs} & \text{nilindex} &$\mathrm{Aut}(J)$& $\mathrm{Ann}(J) $ \\\hline
  $(3,2)_{1}$ & $ e_{i}e_{j}=e_{i}f_{k}= f_{k}f_{l}=0, \ i,j\in \{1,2,3\}, \ k,l \in\{1,2\}.$ & \text{A}&$2$&$13$&$(3,2)$ \\ \hline
  $ (3,2)_{2}$ & $f_1f_2=e_2 $.&\text{A}&$3$&$10$&$(3,0)$ \\ \hline
  $(3,2)_{3}$ & $e_1f_2=f_1$. &\text{A}&$3$&$9$&$(2,1)$ \\ \hline
  $(3,2)_{4}$ & $ e_1f_2=f_1, \ f_1f_2=e_2$. &\text{NA}&$4$&$7$&$(2,0)$ \\ \hline \hline

 $ (3,2)_{5} $&$e_1^2=e_2 $.&\text{A}&$3$&$9$&$(2,2)$ \\ \hline
$  (3,2)_{6}$&$ e_1^2=e_2, \ f_1f_2=e_1$. &\text{NA}&$5$&$6$&$(2,0)$ \\\hline
  $ (3,2)_{7}$ &$ e_1^2=e_2, \ f_1f_2=e_2$. & \text{A}&$3$&$8$&$(2,0)$\\ \hline
 $ (3,2)_{8}$ & $e_1^2=e_2, \ f_1f_2=e_3$. & \text{A}&$3$&$7$&$(2,0)$\\ \hline

  $  (3,2)_{9}$& $e_1^2=e_2, \ e_2f_2=f_1 $.& \text{NA}&$4$&$5$&$(1,1)$\\ \hline
 $ (3,2)_{10}$& $e_1^2=e_2, \ e_3f_2=f_1$. & \text{A}&$3$&$6$&$(1,1)$\\ \hline
 $ (3,2)_{11}$& $e_1^2=e_2, \ e_3f_2=f_1, \ f_1f_2=e_2 $.&\text{NA}&$4$&$5$&$(1,0)$ \\ \hline

 $ (3,2)_{12}$&$ e_1^2=e_2, \ e_1f_2=f_1$. & \text{A}&$3$&$7$&$(2,1)$ \\ \hline
$  (3,2)_{13}$&$ e_1^2=e_2, \ e_1f_2=f_1, \ f_1f_2=e_2$.&\text{NA}&$4$&$6$&$(2,0)$\\ \hline
 $  (3,2)_{14}$& $e_1^2=e_2, \ e_1f_2=f_1, f_1f_2=e_3 $.&\text{NA}&$4$&$5$&$(2,0)$ \\ \hline\hline

 $ (3,2)_{15}$ & $e_1e_2=e_3$.&\text{A}&$3$&$8$&$(1,2)$ \\\hline
$  (3,2)_{16}$& $e_1e_2=e_3, \ f_1f_2=e_1$. & \text{NA}&$4$&$6$&$(1,0)$\\ \hline
   $(3,2)_{17}$& $e_1e_2=e_3, \ f_1f_2=e_3 $.&\text{A}&$3$&$7$&$(1,0)$ \\ \hline
$  (3,2)_{18}$& $e_1e_2=e_3, \ f_1f_2=e_1+e_2 $.& \text{NA}&$5$&$5$&$(1,0)$\\  \hline

$  (3,2)_{19}$&$ e_1e_2=e_3, \ e_3f_2=f_1$. &\text{NA}&$4$&$4$&$(0,1)$ \\   \hline
$  (3,2)_{20}$& $e_1e_2=e_3, \ e_1f_2=f_1$.&\text{A}&$3$&$6$&$(1,1)$ \\ \hline
 $ (3,2)_{21}$ & $e_1e_2=e_3, \ e_1f_2=f_1, \ f_1f_2=e_3 $.& \text{NA}&$4$&$5$&$(1,0)$\\ \hline
   $(3,2)_{22}$ &  $e_1e_2=e_3, \ e_1f_2=e_2f_2=f_1 $.& \text{A}&$3$&$5$&$(1,1)$\\  \hline
 $  (3,2)_{23}$& $e_1e_2=e_3, \ e_1f_2=e_2f_2=f_1, \ f_1f_2=e_3$.&\text{NA}&$4$&$4$&$(1,0)$ \\ \hline\hline

 $ (3,2)_{24}$& $e_1^2=e_2, \ e_1e_2=e_3$.&\text{A}&$4$&$7$&$(1,2)$\\ \hline
  $(3,2)_{25}$&$ e_1^2=e_2, \ e_1e_2=e_3, \ f_1f_2=e_2$.&\text{NA}&$4$&$5$&$(1,0)$\\ \hline
 $ (3,2)_{26}$&$e_1^2=e_2, \ e_1e_2=e_3, \ f_1f_2=e_3 $.&\text{A}&$4$&$6$&$(1,0)$ \\ \hline

  $(3,2)_{27} $& $e_1^2=e_2, \ e_1e_2=e_3, \ e_2f_2=f_1$.&\text{NA} &$4$&$4$&$(1,1)$\\ \hline
 $ (3,2)_{28}$&$ e_1^2=e_2, \ e_1e_2=e_3, \ e_1f_2=f_1$. &\text{A}&$4$&$5$&$(1,1)$\\ \hline
  $(3,2)_{29}$& $e_1^2=e_2, \ e_1e_2=e_3, \ e_1f_2=f_1, \ f_1f_2=e_3 $.&\text{NA}&$4$&$4$&$(1,0)$\\ \hline
\end{longtable}
\end{small}

\begin{proof} 
Indeed, $J_{0}$ is a nilpotent Jordan algebra of dimension $3$ and we fix a basis $\{e_1,e_2,e_3\}$. Since $J$ is nilpotent, each linear operator in the algebra generated by $l_{e_1},l_{e_2},l_{e_3}$ is nilpotent. It follows from \cite[Theorem 1.3.1]{radjavi} that there exists a basis $\mathcal{B}_{1}=\{f_1,f_2\}$ of $J_{1}$ such that $l_{e_1},l_{e_2},l_{e_3}$ are simultaneously triangularizable. That is:
\[[l_{e_1}]_{\mathcal{B}_{1}}=\left(\begin{array}{cc}
0 & \delta_{1} \\
0 & 0  \\ 
\end{array}\right), \ [l_{e_2}]_{\mathcal{B}_{1}}=\left(\begin{array}{cc}
0 & \delta_{2} \\
0 & 0  \\ 
\end{array}\right), \ [l_{e_3}]_{\mathcal{B}_{1}}=\left(\begin{array}{cc}
0 & \delta_{3} \\
0 & 0  \\ 
\end{array}\right),\]
so we need to determine:
\begin{equation}\label{parameters32}
f_1f_2=\omega_{1} e_1+\omega_{2} e_2 +\omega_{3} e_3 \ \text{and } e_if_2=\delta_if_1, \ i=1,\dots 3. 
\end{equation}

We denote $\delta=(\delta_{1},\delta_{2},\delta_{3})$ and $\omega=(\omega_{1},\omega_{2},\omega_{3})$. Note that the nilpotency of $L_{f_{2}}$ implies that the trace of $[L_{f_{2}}^{2}]_\mathcal{B}$ must be zero, where $\mathcal{B}=\{e_1,e_2,e_3,f_1,f_2\}$.
This yields the following condition, independently of the choice of $J_{0}$:
\begin{equation}\label{eigenj31}
\delta_1\omega_1+\delta_2\omega_2+\delta_3\omega_3=0  . 
\end{equation}
Moreover, by Table \ref{table:algjor4} we have four non-isomorphic possibilities for $J_{0}$. We will analyze each case.

\underline{$J_{0}=J_{3,1}$:} The Jordan superidentity does not produce more restrictions. Considering the condition given in \eqref{eigenj31}, we divide into the following cases: 
 \begin{enumerate}[1)]
 \item $\delta=0$.
 \begin{enumerate}[a)]
     \item If $\omega=0$, then $J$ is isomorphic to $(3,2)_{1}$.
     \item If $\omega\neq 0$, then $J$ is isomorphic to $(3,2)_{2}$ via the change of basis:
     \begin{enumerate}[i)]
         \item $e_{2}^{\prime}\mapsto \omega_{1}e_{1}+\omega_{2}e_{2}+\omega_{3}e_{3}$, if $\omega_{2}\neq 0$.
         \item $e_{1}^{\prime}\mapsto e_{2}, \ e_{2}^{\prime}\mapsto \omega_{1}e_{1}+\omega_{3}e_{3}$, if $\omega_{2}=0$ and $\omega_{1}\neq0$.
         \item $e_{2}^{\prime}\mapsto\omega_{3}e_{3}, \ e_{3}^{\prime}\mapsto e_{2}$, if $\omega_{2}=\omega_{1}=0$ (consequently, $\omega_{3}\neq0$).
     \end{enumerate}
     \end{enumerate}
      \item $\delta\neq 0$. In this case, with a suitable change of basis, we can verify that $J$ is isomorphic to a superalgebra $J^{\prime}$ such that $\delta^{\prime}=(1,0,0)$ and $\omega^{\prime}=(\omega^{\prime}_{1},\omega^{\prime}_{2},\omega^{\prime}_{3})$, with $\omega_{i}^{\prime}\in\mathbb{C}$, for all $i=1,2,3$. Indeed:
      \begin{enumerate}[a)]
          \item If $\delta_{1}\neq0$, by changing $e_{1}^{\prime}\mapsto\delta_{1}^{-1}e_{1}, \ e_{2}^{\prime}\mapsto e_{2}-\delta_{2}\delta_{1}^{-1}e_{1}, \ e_{3}^{\prime}\mapsto e_{3}-\delta_{3}\delta_{1}^{-1}e_{1}$.
          \item If $\delta_{1}=0$ and $\delta_{2}\neq 0$, by changing $e_{1}^{\prime}\mapsto \delta_{2}^{-1}e_{2}, \ e_{2}^{\prime}\mapsto e_{1}-\delta_{1}\delta_{2}^{-1}e_{2}, \ e_{3}^{\prime}\mapsto e_{3}-\delta_{3}\delta_{2}^{-1}e_{2}$.
          \item If $\delta_{1}=\delta_{2}=0$, we may assume $\delta_{3}\neq 0$. In this case, the isomorphism is obtained via $e_{1}^{\prime}\mapsto \delta_{3}^{-1}e_{3}, \ e_{3}^{\prime}\mapsto e_{1}$.
      \end{enumerate}
 \end{enumerate}
  Thus, it remains to consider the following possibilities. If $\omega^{\prime}=0$, then $J$ is isomorphic to $(3,2)_{3}$. If $\omega^{\prime}\neq 0$, then from \eqref{eigenj31} we get $\omega_{1}^{\prime}=0$, while $\omega_{2}^{\prime}$ and $\omega_{3}^{\prime}$ remain free. By replacing $e_{2}^{\prime\prime}\mapsto \omega_{2}^{\prime}e_{2}^{\prime}+\omega_{3}^{\prime}e_{3}^{\prime}$, we have $J\cong(3,2)_{4}$.

\underline{$J_{0}=J_{3,2}$:} From $J^{s}(f_2;e_1,e_1,f_2)=J^{s}(f_2;e_1,e_3,f_2)=0$ together with the condition \eqref{eigenj31} we get the following system:
\begin{equation} \label{sysJ32}
\begin{alignedat}{2}
\delta_{2}\begin{pmatrix}\omega_{1}\\ \omega_{3}\end{pmatrix} &= 0, &\qquad \delta_{3}\,\omega_{1} &= 0, \\
2\delta_{1}\,\omega_{1}-\delta_{2}\,\omega_{2} &= 0, &\qquad \delta_{1}\,\omega_{1}+\delta_{2}\,\omega_{2}+\delta_{3}\,\omega_{3} &= 0. 
\end{alignedat}
\end{equation}

We consider the following possibilities:

\begin{enumerate}[1)]
 \item $\delta=0$. Hence $\omega_{i}$ are arbitrary for all $i\in\{1,2,3\}$. In this case, we have four possibilities:
  \begin{enumerate}[a)]
     \item If $\omega=0$, then $J$ is isomorphic to $(3,2)_{5}$.
     \item If $\omega_{1}\neq 0$, by changing $e_{1}^{\prime}\mapsto \omega_{1}e_{1}+\omega_{2}e_{2}+\omega_{3}e_{3}, \ e_{2}^{\prime}\mapsto \omega_{1}^{2}e_{2}$, we obtain $J\cong (3,2)_{6}$.
      \item If $\omega_{1}=\omega_{3}=0$, we may assume $\omega_{2}\neq 0$. By replacing $f_{2}^{\prime}\mapsto \omega_{2}^{-1}f_2$, we have $J\cong (3,2)_{7}$.
     \item If $\omega_{1}=0$ and $\omega_{3}\neq 0$, by replacing $e_{3}^{\prime}\mapsto \omega_{2}e_{2}+\omega_{3}e_{3}$, we have $J\cong (3,2)_{8}$.
    
 \end{enumerate}

\item $\delta\neq0$.
\begin{enumerate}[a)]
    \item Assume that $\delta_{2}\neq0$. Then from \eqref{sysJ32}, $\omega=0$. In this case, we have that $J$ is isomorphic to $(3,2)_{9}$ via the change of basis: 
    \begin{enumerate}[i)]
            \item $e_{1}^{\prime}\mapsto e_{1}-\delta_{1}\delta_{3}^{-1}e_{3}, \ e_{3}^{\prime}\mapsto \delta_{3}e_{2}-\delta_{2}e_{3}, \ f_{1}^{\prime}\mapsto \delta_{2}f_{1}$, if $\delta_{3}\neq 0$.
            \item $e_{1}^{\prime}\mapsto e_{1}-\delta_{1}\delta_{2}^{-1}e_{2}, \ f_{1}^{\prime}\mapsto \delta_{2}f_{1}$, if $\delta_{3}=0$.
        \end{enumerate}

    \item Assume $\delta_{2}=0$ and $\delta_{3}\neq 0$. Then from \eqref{sysJ32}, we get $\omega_{1}=\omega_{3}=0$, while $\delta_{1}$ and $\omega_{2}$ are arbitrary.
        \begin{enumerate}[i)]
            \item If $\omega_{2}=0$, by changing $e_{1}^{\prime}\mapsto e_{1}-\delta_{1}\delta_{3}^{-1}e_{3}, \ f_{1}^{\prime}\mapsto \delta_{3}f_{1}$, we obtain $J\cong(3,2)_{10}$.
            \item If $\omega_{2}\neq 0$,  by changing $e_{1}^{\prime}\mapsto e_{1}-\delta_{1}\delta_{3}^{-1}e_{3}, \ e_{3}^{\prime}\mapsto (\omega_{2}\delta_{3})^{-1}e_{3}, \ f_{1}^{\prime}\mapsto \omega_{2}^{-1}f_{1}$, we obtain $J\cong (3,2)_{11}$.
            \end{enumerate}

    \item If $\delta_{2}=\delta_{3}=0$ and $\delta_{1}\neq 0$. From \eqref{sysJ32}, we get $\omega_{1}=0$, whereas $\omega_{2}$ and $\omega_{3}$ remain free.
        \begin{enumerate}[i)]
            \item If $\omega_{2}=\omega_{3}=0$, by replacing $f_{1}^{\prime}\mapsto \delta_{1}f_{1}$, we have $J\cong(3,2)_{12}$.
            \item If $\omega_{2}\neq0$ and $\omega_{3}=0$, by changing $f_{1}^{\prime}\mapsto \sqrt{\delta_{1}\omega_{2}^{-1}}f_{1}, \ f_{2}^{\prime}\mapsto (\sqrt{\delta_{1}\omega_{2}})^{-1}f_{2}$, we obtain $J\cong (3,2)_{13}$.
            \item If $\omega_{3}\neq 0$, by changing $e_{1}^{\prime}\mapsto \delta_{1}^{-1}e_{1}, \ e_{2}^{\prime}\mapsto \delta_{1}^{-2}e_{2}, \ e_{3}^{\prime}\mapsto \omega_{2}e_{2}+\omega_{3}e_{3}$, we obtain $J\cong (3,2)_{14}$.
        \end{enumerate}
\end{enumerate}

 \end{enumerate}

\underline{$J_{0}=J_{3,3}$:} From $J^{s}(f_2;e_1,e_1,f_2)=J^{s}(f_2;e_1,e_2,f_2)=J^{s}(f_2;e_2,e_2,f_2)=J^{s}(f_2;e_2,e_3,f_2)=0$ and condition \eqref{eigenj31}, we deduce the following system:
\begin{equation} \label{sysJ33}
\begin{alignedat}{2}
\delta_{3}\begin{pmatrix}\omega_{1}\\ \omega_{2}\\ \omega_{3}\end{pmatrix}&=0, &\qquad \delta_{1}\omega_{2}&=0, \\
\delta_{2}\omega_{1}&=0, &\qquad \delta_{1}\omega_{1}+\delta_{2}\omega_{2}+\delta_{3}\omega_{3}&=0.
\end{alignedat}
\end{equation}

 We separate as follows:
\begin{enumerate}[1)]
\item $\delta=0$. 
\begin{enumerate}[a)]
\item If $\omega=0$, then $J$ is isomorphic to $(3,2)_{15}$.
   
    \item If $\omega_{1}\omega_{2}=0$, we have two possibilities:
    \begin{enumerate}[i)]
        \item If $\omega_{k}\neq0$ for some $k\in\{1,2\}$, by changing $e_{1}^{\prime}\mapsto \omega_{1}e_{1}+\omega_{2}e_{2}+\omega_{3}e_{3}, \ e_{2}^{\prime}\mapsto \omega_{2}e_{1}+\omega_{1}e_{2}, \ e_{3}^{\prime}\mapsto \omega_{k}^{2}e_{3}$ we obtain $J\cong (3,2)_{16}$.
        \item If $\omega_{1}=\omega_{2}=0$, then we may assume $\omega_{3}\neq 0$ and by replacing $f_{2}^{\prime}\mapsto \omega_{3}^{-1}f_{2}$, we have $J\cong(3,2)_{17}$.
    \end{enumerate}
     \item If $\omega_1\omega_2\neq0$, by changing $e_{1}^{\prime}\mapsto \omega_{1}e_{1}, \ e_{2}^{\prime}\mapsto\omega_{2}e_{2}+\omega_{3}e_{3}, \ e_{3}\mapsto \omega_{1}\omega_{2}e_{3}$, we obtain $J\cong (3,2)_{18}$. 
\end{enumerate}
\item $\delta\neq0$. 
\begin{enumerate}[a)]
    \item Assume that $\delta_{3}\neq 0$. Then from \eqref{sysJ33}, $\omega=0$. In this case, by changing $e_{1}^{\prime}\mapsto \delta_{3}^{-1}e_{1}-\delta_{1}\delta_{3}^{-2}e_3, \ e_{2}^{\prime}\mapsto e_{2}-\delta_{2}\delta_{3}^{-1}e_{3}, \ e_{3}^{\prime}\mapsto \delta_{3}^{-1}e_{3}$, we obtain $J\cong (3,2)_{19}$.
    \item Assume that $\delta_{3}=0$ and $\delta_{1}\neq 0$. Then from \eqref{sysJ33} we get $\omega_{1}=\omega_{2}=0$, while $\delta_{2}$ and $\omega_{3}$ are arbitrary. We have the following possibilities:
    \begin{enumerate}[i)]
        \item If $\delta_{2}=\omega_{3}=0$, by replacing $f_{1}^{\prime}\mapsto\delta_{1}f_{1}$, we have $J\cong (3,2)_{20}$.
         
        \item If $\delta_{2}=0$ and $\omega_{3}\neq0$, by changing $ f_{1}^{\prime}\mapsto\delta_{1}(\sqrt{\delta_{1}\omega_{3}})^{-1}f_{1}, \ f_{2}^{\prime}\mapsto(\sqrt{\delta_{1}\omega_{3}})^{-1}f_{2}$, we obtain $J\cong (3,2)_{21}$.  
         \item If $\delta_{2}\neq 0$ and $\omega_{3}=0$, by changing $e_{1}^{\prime}\mapsto \delta_{1}^{-1}e_{1}, \ e_{2}^{\prime}\mapsto \delta_{2}^{-1}e_{2}, \ e_{3}^{\prime}\mapsto (\delta_{1}\delta_{2})^{-1}e_{3}$, we obtain $J\cong(3,2)_{22}$.
        \item If $\delta_{2}\omega_{3}\neq0$, by changing $e_{1}^{\prime}\mapsto \delta_{2}\omega_{3}e_{1}, \ e_{2}^{\prime}\mapsto \delta_{1}\omega_{3}e_{2}, \ e_{3}^{\prime}\mapsto \delta_{1}\delta_{2}\omega_{3}^{2}e_{3}, \ f_{1}^{\prime}\mapsto \delta_{1}\delta_{2}\omega_{3}f_{1}$, we obtain $J\cong(3,2)_{23}$.
    \end{enumerate}
    \item Assume $\delta_{3}=\delta_{1}=0$ and $\delta_{2}\neq 0$. From \eqref{sysJ33}, we get $\omega_{1}=\omega_{2}=0$ and $\omega_{3}$ is arbitrary.
    \begin{enumerate}[i)]
        \item If $\omega_{3}=0$, by changing $e_{1}^{\prime}\mapsto e_{2}, \ e_{2}^{\prime}\mapsto e_{1}, \ f_{1}^{\prime}\mapsto\delta_{2}f_{1}$, we obtain $J\cong (3,2)_{20}$.
        
        \item If $\omega_{3}\neq 0$, by changing $e_{1}^{\prime}\mapsto e_{2}, \ e_{2}^{\prime}\mapsto \delta_{2}\omega_{3}e_{1}, \ e_{3}^{\prime}\mapsto \delta_{2}\omega_{3}e_{3}, \ f_{1}^{\prime}\mapsto\delta_{2}f_{1}$, we obtain $J\cong (3,2)_{21}$.   
    \end{enumerate}
\end{enumerate}
\end{enumerate}
\underline{$J_{0}=J_{3,4}$:} The conditions $J^{s}(w;e_{1},e_{1},f_{2})=0$, with $w=e_{1},f_{1},f_{2}$, $J^{s}(f_{2};x,e_{2},f_{2})=0$, with $x=e_{1},e_{2},e_{3}$, and $J^{s}(f_2;e_1,e_1,f_1)=J^{s}(f_2;e_1,e_3,f_2)=0$, together with the condition \eqref{eigenj31} lead us to the following system: 
\begin{equation} \label{sysJ34}
\begin{alignedat}{2}
\delta _3&=0, &\qquad \omega_1&=0,\\ 
     \delta_2 \omega_2&=0, &\qquad     2\delta_1\omega_2-\delta_2\omega_3&=0. 
      \end{alignedat}
\end{equation}

\begin{enumerate}[1)]
\item $\delta=0$.
    \begin{enumerate}[a)]
    \item If $\omega=0$, then $J$ is isomorphic to $(3,2)_{24}$.
    \item If $\omega_{2}\neq 0$, by changing $e_{1}^{\prime}\mapsto e_{1}+\frac{1}{2}\omega_{3}\omega_{2}^{-1}e_{2}, \ e_{2}^{\prime}\mapsto e_{2}+\omega_{3}\omega_{2}^{-1}e_{3}, \ f_{2}^{\prime}\mapsto \omega_{2}^{-1}f_{2}$, we obtain $J\cong(3,2)_{25}$.
    \item If $\omega_{2}=0$, then $\omega_{3}\neq 0$.  By replacing $f_{2}^{\prime}\mapsto\omega_{3}^{-1}f_{2}$, we have $J\cong(3,2)_{26}$.
    \end{enumerate}
    \item $\delta\neq 0$.
    \begin{enumerate}[a)]
      \item Assume that $\delta_2\neq 0$. From \eqref{sysJ34} we get $\omega=0$. By changing $e_{1}^{\prime}\mapsto e_{1}-\delta_{1}\delta_{2}^{-1}e_{2}, \ e_{2}^{\prime}\mapsto e_{2}-2\delta_{1}\delta_{2}^{-1}e_{3}, \ f_{1}^{\prime}\mapsto \delta_{2}f_{1}$, we obtain $J\cong(3,2)_{27}$.
        \item Assume that $\delta_2=0$ and $\delta_1\neq0$. From \eqref{sysJ34}, we obtain $\omega_{2}=0$ and $\omega_{3}$ is arbitrary. 
      \begin{enumerate}[i)]
          \item If $\omega_{3}=0$, by replacing $f_{1}^{\prime}\mapsto \delta_{1}f_{1}$, we have $J\cong(3,2)_{28}$.
          \item If $\omega_{3}\neq0$, by changing $f_{1}^{\prime}\mapsto \sqrt{\delta_{1}\omega_{3}^{-1}}f_{1}, \ f_{2}^{\prime}\mapsto (\sqrt{\delta_{1}\omega_{3}})^{-1}f_{2}$, we obtain $J\cong (3,2)_{29}$.
      \end{enumerate}

    \end{enumerate}
    
    \end{enumerate}

\end{proof}
 
\subsection{Type $(2,3)$}
\begin{theorem}
 Let $J=J_{0}\oplus J_{1}$ be a nilpotent Jordan superalgebra of type $(2,3)$. Then $J$ is isomorphic to one of the superalgebras described in Table \ref{table:caso23}.  
\end{theorem}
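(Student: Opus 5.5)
We follow the four-step strategy of this section, now with $J_{0}\in\{J_{2,1},J_{2,2}\}$ from Table \ref{table:algjor4}. Fix a basis $\{e_1,e_2\}$ of $J_0$.

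\textbf{Step 1: normal form for the action of $J_0$ on $J_1$.} Since every operator in the associative algebra generated by $l_{e_1},l_{e_2}$ is nilpotent, \cite[Theorem 1.3.1]{radjavi} gives a basis $\{f_1,f_2,f_3\}$ of $J_1$ in which $l_{e_1},l_{e_2}$ are simultaneously strictly upper triangular. The Jordan superidentity restricts these matrices further. For example, $J^{s}(e;e,e,f)=0$ should force $l_e^3=0$, exactly as the case $\delta=(1,1,1)$ was excluded in type $(1,4)$. When $J_0=J_{2,2}$ it should also relate $l_{e_2}$ to $l_{e_1}^2$.

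For $J_0=J_{2,1}$ we then use the action of $\mathrm{GL}_2(\mathbb{C})\times\mathrm{GL}_3(\mathbb{C})$, through \eqref{isosimilar} of Lemma \ref{iso14}, to normalize the pencil $\mathrm{span}\{l_{e_1},l_{e_2}\}$. This is a pair of commuting-or-not nilpotent $3\times 3$ upper triangular matrices up to simultaneous similarity and change of basis of the pencil. We can assume $l_{e_1}$ is in Jordan form and then reduce $l_{e_2}$ modulo its centralizer, in the spirit of Burde and Grunewald's $\Delta$-modules. This leaves a short list: trivial action, rank-one actions, the action $e_1f_2=f_1,\ e_1f_3=f_2$, and mixed configurations such as $e_1f_2=f_1,\ e_2f_3=f_1$ or $e_1f_2=f_1,\ e_2f_3=f_2$.

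For $J_0=J_{2,2}$, $T$ must lie in $\mathrm{Aut}(J_{2,2})$, so fewer normalizations are available. Here we work directly with the Jordan form of $l_{e_1}$ and the induced constraints on $l_{e_2}$.

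\textbf{Step 2: the odd products.} For each normal form, write $f_if_j=\omega^1_{ij}e_1+\omega^2_{ij}e_2$ for $i<j$, that is, a pair $(\Gamma^1,\Gamma^2)$ of skew forms on $\mathbb{C}^3$. Then impose three kinds of conditions:
\begin{itemize}
\item the Jordan superidentities $J^{s}(w;x,y,z)=0$ on homogeneous basis elements, in particular $J^{s}(f;e,e,f)$, $J^{s}(f;e,f,f)$ and $J^{s}(e;e,f,f)$;
\item nilpotency of $L_f$ for generic $f$, via trace conditions analogous to \eqref{eigenj31};
\item the analogous conditions for $L_{f_i+f_j}$.
\end{itemize}
Each case reduces to a small linear or bilinear system in the $\omega$'s, as in \eqref{sysJ32}.

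\textbf{Step 3: representatives.} Use the residual stabilizer of the action normal form, acting by \eqref{isocongruent}, to bring $(\Gamma^1,\Gamma^2)$ to canonical shape.

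\textbf{Main obstacle.} The hard case is $J_0=J_{2,1}$ with trivial action. There the classification is exactly that of pairs of skew-symmetric forms on $\mathbb{C}^3$ up to congruence together with $\mathrm{GL}_2$ acting on the pencil. We invoke the Jordan–Kronecker theorem \cite{Kronecker}: on an odd-dimensional space the pencil decomposes into Kronecker and Jordan blocks, and taking the $\mathrm{GL}_2$ action into account leaves finitely many cases.
\begin{itemize}
\item Zero forms.
\item One form of rank $2$.
\item The single $3$-dimensional Kronecker block $f_1f_2=e_1,\ f_2f_3=e_2$.
\item Degenerate pencils with a common kernel.
\end{itemize}

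The other delicate point is that infinite families should appear, as announced in the introduction for even part of dimension $2$. These presumably come from nontrivial actions combined with odd products whose invariants include a ratio $\alpha\in\mathbb{C}$ that cannot be normalized by the stabilizer. For such cases we must compute the stabilizer explicitly from Lemma \ref{iso14} to identify the parameter and when two values give isomorphic algebras, for example $\alpha\sim\alpha^{-1}$ or $\alpha\sim-\alpha$.

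\textbf{Step 4: non-isomorphism.} Distinguish the representatives by the invariants already recorded in the tables: $J_0$, the rank and shape of the action, the type of $\mathrm{Ann}(J)$, the nilindex, associativity, and $\dim\mathrm{Aut}(J)$. Within a family, the parameter itself serves as the invariant.
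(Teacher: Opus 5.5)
\textbf{The proposal has a genuine gap.} Your architecture is the paper's: simultaneous triangularization, Burde--Grunewald normal forms over $J_{2,1}$, the Jordan--Kronecker theorem for the trivial action, and Lemma~\ref{iso14} for representatives. Your treatment of the trivial action over $J_{2,1}$ is also correct. The pencil is zero, a line of rank-$2$ forms, or a single $3$-dimensional Kronecker block, giving $(2,3)_1$, $(2,3)_2$, $(2,3)_3$; your block $f_1f_2=e_1,\ f_2f_3=e_2$ becomes $(2,3)_3$ after swapping $f_1,f_2$ and rescaling.

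However, the theorem is an exhaustiveness claim, and everything beyond that subcase is left undone. The admissible actions are listed only ``such as'', no system for the odd products is written or solved, and the families are guessed. Nothing in the proposal shows that the output is exactly Table~\ref{table:caso23}.

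Moreover, your one concrete structural prediction for $J_{2,2}$ is wrong, and it is the input that organizes that half of the proof.

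\begin{enumerate}
\item The superidentity with three copies of $e_1$ and one odd $z$ gives $3l_{e_2}l_{e_1}=2l_{e_1}^3$ and $l_{e_1}l_{e_2}=l_{e_2}l_{e_1}$. Since $l_{e_1}^3=0$ automatically on a $3$-dimensional space, your remark that $J^{s}(e;e,e,f)=0$ forces $l_e^3=0$ is vacuous here. What you actually get is $l_{e_1}l_{e_2}=l_{e_2}l_{e_1}=0$.
\item Then $J^{s}(e_2;e_1,e_1,z)=0$ gives $l_{e_2}^2=2l_{e_1}l_{e_2}l_{e_1}=0$, so $\operatorname{rk} l_{e_2}\le 1$.
\item Nothing ties $l_{e_2}$ to $l_{e_1}^2$. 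The superalgebra $(2,3)_{32}$ has $l_{e_1}^2\neq 0=l_{e_2}$, $(2,3)_{35}$ has $l_{e_1}=0\neq l_{e_2}$, and $(2,3)_{43}^{\gamma}$ has $l_{e_1}^2=\gamma\, l_{e_2}$ with $\gamma$ arbitrary.
\item That ratio is a genuine modulus. The automorphisms $e_1\mapsto ae_1+be_2$, $e_2\mapsto a^2e_2$ of $J_{2,2}$ multiply both $l_{e_1}^2$ and $l_{e_2}$ by $a^2$, using the relations just derived. So one infinite family lives purely in the module structure, with no odd products, contrary to your guess. None of the identifications $\alpha\sim\alpha^{-1}$, $\alpha\sim-\alpha$ you anticipate occurs.
\item The same automorphisms explain why the paper puts $l_{e_2}$, not $l_{e_1}$, in Jordan form. $l_{e_2}$ is canonical up to a scalar, while the Jordan type of $l_{e_1}$ changes under $l_{e_1}\mapsto l_{e_1}+b\,l_{e_2}$. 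With $l_{e_2}=E_{12}$, the relations force $l_{e_1}=\delta_3E_{12}+\delta_5E_{13}+\delta_4E_{32}$. This is the source of $(2,3)_{35}$ through $(2,3)_{44}^{\phi}$.
\end{enumerate}

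In the opposite direction, over $J_{2,1}$ the superidentity does \emph{not} restrict the action at all. It only involves products of three strictly upper triangular $3\times 3$ matrices, which vanish. So every pencil occurs, including the four two-dimensional types $(2,3)_{11}$, $(2,3)_{13}$, $(2,3)_{15}$, $(2,3)_{19}$, and the constraints fall entirely on the odd products.
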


{\footnotesize
\begin{longtable}{|l|l|l|l|l|l|}
\caption{Nilpotent Jordan superalgebras of type $(2,3)$}\label{table:caso23}\\ \hline
$J$&\text{multiplication table}&\text{Obs} & \text{nilindex} &$\mathrm{Aut}(J)$& $\mathrm{Ann}(J) $ \\ \hline
$ (2,3)_{1} $ & $ e_{i}e_{j}=0, \ e_{i}f_{k}=0, \ f_{k}f_{l}=0, \ i,j\in \{1,2\}, \ k,l \in\{1,2,3\}.$ & $\text{A}$ & $2$ & $13$ & $ (2,3) $ \\ \hline
$ (2,3)_{2} $ & $ f_{1}f_{2}=e_{1} $. & $\text{A}$ & $3$ & $9$ & $ (2,1) $ \\ \hline
$ (2,3)_{3} $ & $ f_{1}f_{2}=e_{1}, \ f_{1}f_{3}=e_{2} $. & $\text{A}$ & $3$ & $7$ & $ (2,0) $ \\ \hline
$ (2,3)_{4} $ & $ e_{1}f_{2}=f_{1} $. & $\text{A}$ & $3$ & $8$ & $ (1,2) $ \\ \hline
$ (2,3)_{5} $ & $ e_{1}f_{2}=f_{1}, \ f_{1}f_{2}=e_{2} $. & $\text{NA}$ & $4$ & $6$ & $ (1,1) $ \\ \hline
$ (2,3)_{6} $ & $ e_{1}f_{2}=f_{1}, \ f_{2}f_{3}=e_{2} $. & $\text{A}$ & $3$ & $7$ & $ (1,1) $ \\\hline
$ (2,3)_{7} $ & $ e_{1}f_{2}=f_{1}, \ f_{1}f_{3}=e_{2} $. & $\text{NA}$ & $4$ & $5$ & $ (1,0) $ \\ \hline
$ (2,3)_{8} $ & $ e_{1}f_{2}=f_{1}, \ f_{2}f_{3}=e_{1} $. & $\text{NA}$ & $4$ & $6$ & $ (1,1) $ \\ \hline
$ (2,3)_{9} $ & $ e_{1}f_{2}=f_{1}, \ f_{1}f_{2}=e_{2}, \ f_{2}f_{3}=e_{1} $. & $\text{NA}$ & $5$ & $5$ & $ (1,0) $ \\ \hline
$ (2,3)_{10} $ & $ e_{1}f_{2}=f_{1}, \ f_{1}f_{3}=e_{2}, \ f_{2}f_{3}=e_{1} $. & $\text{NA}$ & $5$ & $4$ & $ (1,0) $ \\ \hline
$ (2,3)_{11} $ & $ e_{1}f_{2}=f_{1}, \ e_{2}f_{3}=f_{1} $. & $\text{A}$ & $3$ & $7$ & $ (0,1) $ \\ \hline
$ (2,3)_{12} $ & $ e_{1}f_{2}=f_{1}, \ e_{2}f_{3}=f_{1}, \ f_{2}f_{3}=e_{2} $. & $\text{NA}$ & $4$ & $5$ & $ (0,1) $ \\ \hline
$ (2,3)_{13} $ & $ e_{1}f_{2}=f_{1}, \ e_{2}f_{2}=f_{3} $. & $\text{A}$ & $3$ & $7$ & $ (0,2) $ \\ \hline
$ (2,3)_{14} $ & $ e_{1}f_{2}=f_{1}, \ e_{2}f_{2}=f_{3}, \ f_{1}f_{2}=e_{2} $. & $\text{NA}$ & $5$ & $5$ & $ (0,1) $ \\ \hline
$ (2,3)_{15} $ & $ e_{1}f_{2}=f_{1}, \ e_{2}f_{3}=f_{2} $. & $\text{NA}$ & $4$ & $4$ & $ (0,1) $ \\ \hline
$ (2,3)_{16} $ & $ e_{1}f_{2}=f_{1}, \ e_{1}f_{3}=f_{2} $. & $\text{NA}$ & $4$ & $6$ & $ (1,1) $ \\ \hline

$ (2,3)_{17} $ & $ e_{1}f_{2}=f_{1}, \ e_{1}f_{3}=f_{2}, \ f_{2}f_{3}=e_{2} $. & $\text{NA}$ & $4$ & $5$ & $ (1,1) $ \\ \hline

$ (2,3)_{18} $ & $ e_{1}f_{2}=f_{1}, \ e_{1}f_{3}=f_{2}, \ f_{1}f_{3}=e_{2} $. & $\text{NA}$ & $5$ & $4$ & $ (1,0) $ \\ \hline

$ (2,3)_{19} $ & $ e_{1}f_{2}=e_{2}f_{3}=f_{1}, \ e_{1}f_{3}=f_{2} $. & $\text{NA}$ & $4$ & $5$ & $ (0,1) $ \\ \hline
$ (2,3)_{20} $ & $ e_{1}f_{2}=e_{2}f_{3}=f_{1}, \ e_{1}f_{3}=f_{2}, \ f_{2}f_{3}=e_{2} $. & $\text{NA}$ & $5$ & $4$ & $ (0,1) $ \\ \hline\hline
$ (2,3)_{21} $ & $ e_{1}^{2}=e_{2} $. & $\text{A}$ & $3$ & $11$ & $ (1,3) $ \\ \hline
$ (2,3)_{22} $ & $ e_{1}^{2}=e_{2}, \ f_{1}f_{2}=e_{1} $. & $\text{NA}$ & $5$ & $7$ & $ (1,1) $ \\ \hline
$ (2,3)_{23} $ & $ e_{1}^{2}=e_{2}, \ f_{1}f_{2}=e_2 $. & $\text{A}$ & $3$ & $8$ & $ (1,1) $ \\ \hline
$ (2,3)_{24} $ & $ e_1^{2}=e_2, \ f_{1}f_{2}=e_1, \ f_{1}f_{3}=e_2  $. & $\text{NA}$ & $5$ & $5$ & $ (1,0) $ \\ \hline
$ (2,3)_{25} $ & $ e_{1}^{2}=e_{2}, \ e_{1}f_{2}=f_{1} $. & $\text{A}$ & $3$ & $7$ & $ (1,2) $ \\ \hline
$ (2,3)_{26} $ & $ e_{1}^{2}=e_{2}, \ e_{1}f_{2}=f_{1}, \ f_{2}f_{3}=e_{2} $. & $\text{A}$ & $3$ & $6$ & $ (1,1) $ \\ \hline
$ (2,3)_{27} $ & $ e_{1}^{2}=e_{2}, \ e_{1}f_{2}=f_{1}, \ f_{1}f_{2}=e_{2} $. & $\text{NA}$ & $4$ & $5$ & $ (1,1) $ \\ \hline
$ (2,3)_{28} $ & $ e_{1}^{2}=e_{2}, \ e_{1}f_{2}=f_{1}, \ f_{1}f_{3}=e_{2} $. & $\text{NA}$ & $4$ & $4$ & $ (1,0) $ \\ \hline
$ (2,3)_{29} $ & $ e_{1}^{2}=e_{2}, \ e_{1}f_{2}=f_{1}, \ f_{2}f_{3}=e_{1} $. & $\text{NA}$ & $5$ & $5$ & $ (1,1) $ \\ \hline
$ (2,3)_{30} $ & $ e_{1}^{2}=e_{2}, \ e_{1}f_{2}=f_{1}, \ f_{1}f_{2}=e_{2}, \ f_{2}f_{3}=e_{1} $. & $\text{NA}$ & $5$ & $4$ & $ (1,0) $ \\ \hline

$ (2,3)_{31}^{\lambda} $ & $ e_{1}^{2}=e_{2}, \ e_{1}f_{2}=f_{1}, \ f_{1}f_{3}=\lambda e_{2}, \ f_{2}f_{3}=e_{1}, \  \text{\footnotesize{$\lambda\in \mathbb{C}^{\ast}$}} $. & $\text{NA}$ & $5$ & $4$ & $ (1,0) $ \\ \hline
$ (2,3)_{32} $ & $ e_{1}^{2}=e_{2}, \ e_{1}f_{2}=f_{1}, \ e_{1}f_{3}=f_{2} $. & $\text{NA}$ & $4$ & $5$ & $ (1,1) $ \\ \hline
$ (2,3)_{33} $ & $ e_{1}^{2}=e_{2}, \ e_{1}f_{2}=f_{1}, \ e_{1}f_{3}=f_{2}, \ f_{1}f_3=e_{2} $. & $\text{NA}$ & $5$ & $3$ & $ (1,0) $ \\ \hline
$ (2,3)_{34} $ & $ e_{1}^{2}=e_{2}, \ e_{1}f_{2}=f_{1}, \ e_{1}f_{3}=f_{2}, \ f_{2}f_{3}=e_{2} $. & $\text{NA}$ & $4$ & $4$ & $ (1,1) $ \\ \hline
$ (2,3)_{35} $ & $ e_{1}^{2}=e_{2}, \ e_{2}f_{2}=f_1 $. & $\text{NA}$ & $4$ & $6$ & $ (0,2) $ \\ \hline
$ (2,3)_{36} $ & $ e_{1}^{2}=e_{2}, \ e_{2}f_{2}=f_1, \ f_{2}f_{3}=e_{2} $. & $\text{NA}$ & $4$ & $5$ & $ (0,1) $ \\ \hline
$ (2,3)_{37} $ & $ e_{1}^{2}=e_{2}, \ e_{2}f_{2}=f_{1}, \ f_{2}f_{3}=e_{1} $. & $\text{NA}$ & $6$ & $5$ & $ (0,1) $ \\ \hline

$ (2,3)_{38} $ & $ e_{1}^{2}=e_{2}, \ e_{1}f_{2}=e_{2}f_{2}=f_{1}, \ f_{2}f_{3}=e_{1} $. & $\text{NA}$ & $6$ & $4$ & $ (0,1) $ \\ \hline
$ (2,3)_{39} $ & $ e_{1}^{2}=e_{2}, \ e_{1}f_{3}= e_{2}f_{2}=f_{1} $. & $\text{NA}$ & $4$ & $5$ & $ (0,1) $ \\ \hline
$ (2,3)_{40} $ & $ e_{1}^{2}=e_{2}, \ e_{1}f_{3}=e_{2}f_{2}=f_1, \ f_{2}f_{3}=e_2 $. & $\text{NA}$ & $4$ & $4$ & $ (0,1) $ \\ \hline
$ (2,3)_{41} $ & $ e_{1}^{2}=e_{2}, \ e_{1}f_{3}=e_{2}f_{2}=f_{1}, \ f_{2}f_{3}=e_{1} $. & $\text{NA}$ & $6$ & $3$ & $ (0,1) $ \\ \hline 

$ (2,3)_{42} $ & $ e_{1}^{2}=e_{2}, \ e_{1}f_{2}=f_{3}, \ e_{2}f_{2}=f_{1} $. & $\text{NA}$ & $4$ & $5$ & $ (0,2) $ \\ \hline
$ (2,3)_{43}^{1} $ & $ e_{1}^{2}=e_2, \ e_{1}f_{2}=f_{3}, \ e_{1}f_{3}=f_{1}, \ e_{2}f_{2}=f_{1}$.  & $\text{A}$ & $4$ & $5$ & $ (0,1) $ \\ \hline
$ (2,3)_{43}^{\gamma} $ & $ e_{1}^{2}=e_2, \ e_{1}f_{2}=f_{3}, \ e_{1}f_{3}=\gamma f_{1}, \ e_{2}f_{2}=f_{1}, \text{\footnotesize{ $\gamma\in \mathbb{C}^{\ast} \ \backslash \ \{1\}$}} $. & $\text{NA}$ & $4$ & $5$ & $ (0,1) $ \\ \hline
$ (2,3)_{44}^{\phi} $ & $ e_1^{2}=e_2, \ e_{1}f_{2}=f_{3}, \ e_{1}f_{3}=\phi f_{1}, \ e_{2}f_{2}=f_{1}, \ f_{2}f_{3}=e_{2},\text{\footnotesize{ $\phi\in \mathbb{C}$}}$. & $\text{NA}$ & $5$ & $4$ & $ (0,1) $ \\ \hline

\end{longtable}
}
These superalgebras are pairwise non-isomorphic except for
\begin{itemize}
    \item $(2,3)_{31}^{\lambda_{1}}\cong(2,3)_{31}^{\lambda_{2}}$ if and only if $\lambda_{1}=\lambda_{2}$;
    \item $(2,3)_{43}^{\gamma_{1}}\cong (2,3)_{43}^{\gamma_{2}}$ if and only if $\gamma_{1}=\gamma_{2}$;
    \item $(2,3)_{44}^{\phi_{1}}\cong(2,3)_{44}^{\phi_{2}}$ if and only if $\phi_{1}=\phi_{2}$.
\end{itemize}
\begin{proof}
Let $J=J_{0}\oplus J_{1}$ be a nilpotent Jordan superalgebra of type $(2,3)$. There are two non-isomorphic possibilities for $J_{0}$, namely the nilpotent Jordan algebras $J_{2,1}$ and $J_{2,2}$ in Table \ref{table:algjor4}. We first consider the trivial case.

\underline{$J_{0}=J_{2,1}$}: Since fewer restrictions are imposed in this case, the strategies employed previously do not yield a suitable simultaneous representation for $l_{e_{1}}$ and $l_{e_{2}}$.
Therefore, we follow the strategy presented in \cite[Section 2]{FormBurde}. Since $l_{e_{1}}$ and $l_{e_{2}}$ are simultaneously triangularizable, we may choose a basis $\mathcal{B}_{1}$ of $J_{1}$ such that:

\begin{equation}\label{burde}[l_{e_{1}}]_{\mathcal{B}_{1}}=\left(\begin{array}{ccc}
0 &\lambda_1     &\lambda_{2}  \\
0     & 0 & \lambda_3\\
0&0&0\end{array}\right), \ \text{and } [l_{e_{2}}]_{\mathcal{B}_{1}}=\left(\begin{array}{ccc}
0 &\delta_{3}    &\delta_{4}  \\
0     & 0 & \delta_{5}\\
0&0&0\end{array}\right),\end{equation}

where $\lambda_{i}\in\{0,1\}$, $\delta_i \in \mathbb{C}$, and each row and each column of $[l_{e_{1}}]_{\mathcal{B}_{1}}$ there is at most one nonzero entry. We denote $f_{i}f_{j}=\omega_{ij}e_1+\theta_{ij}e_2$. We split our analysis by considering all possible forms of $[l_{e_{1}}]_{\mathcal{B}_{1}}$, according to \eqref{burde}:
\[
\begin{array}{ccccc}
\matitem{\begin{pmatrix}0&0&0\\0&0&0\\0&0&0\end{pmatrix}}, &
\matitem{\begin{pmatrix}0&1&0\\0&0&0\\0&0&0\end{pmatrix}}, &
\matitem{\begin{pmatrix}0&0&1\\0&0&0\\0&0&0\end{pmatrix}}, &
\matitem{\begin{pmatrix}0&0&0\\0&0&1\\0&0&0\end{pmatrix}}, &
\matitem{\begin{pmatrix}0&1&0\\0&0&1\\0&0&0\end{pmatrix}}.
\end{array}\]

Moreover, since $J_{2,1}$ has a trivial multiplication, we may interchange the roles of $e_{1}$ and $e_{2}$, if necessary, and assume that the ranks of the linear maps satisfy  $\operatorname{rk}(l_{e_2}) \leq \operatorname{rk}(l_{e_1})$.

\begin{enumerate}[1)]
\item $l_{e_1}=0$. We can assume $l_{e_2}=0$. In this situation, there are no restrictions on the parameters $\omega_{ij}$ and $\theta_{ij}$. Therefore, $(\omega_{ij})$ and $(\theta_{ij})$ are arbitrary skew-symmetric bilinear forms and by Lemma \ref{iso14} our problem is reduced to that of determining all possible pairs of such forms. By the Jordan–Kronecker Theorem \cite[Theorem1]{Kronecker} we can find a basis of $J_{1}$ such that $(\omega_{ij})$ and $(\theta_{ij})$ are simultaneously congruent to one of the following pairs $(\omega_{ij})^{\prime}$ and $(\theta_{ij})^{\prime}$, respectively. 

\begin{enumerate}[a)]
    \item $(\omega_{ij})^{\prime}=0$ and $(\theta_{ij})^{\prime}=0$. In this case, $J\cong(2,3)_{1}$.
    \item $(\omega_{ij})^{\prime}=\left(\begin{array}{ccc}
  0&1   & 0 \\
  -1 & 0 &0\\
  0&0&0
\end{array}\right)$ and $(\theta_{ij})^{\prime}=0$. Thus $J\cong(2,3)_{2}$.
\item $(\omega_{ij})^{\prime}=\left(\begin{array}{ccc}
  0&\lambda   & 0 \\
  -\lambda & 0 &0\\
  0&0&0
\end{array}\right)$, $\lambda\neq0$, and $(\theta_{ij})^{\prime}=\left(\begin{array}{ccc}
  0&1   & 0 \\
  -1 & 0 &0\\
  0&0&0
\end{array}\right)$. By replacing  $e_{1}^{\prime}\mapsto\lambda e_{1}+e_{2}$, we have $J\cong(2,3)_{2}$.
\item $(\omega_{ij})^{\prime}=0$ and $(\theta_{ij})^{\prime}=\left(\begin{array}{ccc}
  0&1   & 0 \\
  -1 & 0 &0\\
  0&0&0
\end{array}\right)$. By changing the roles of $e_{1}$ and $e_{2}$, we have $J\cong (2,3)_{2}$.
\item $(\omega_{ij})^{\prime}=\left(\begin{array}{ccc}
  0&1   & 0 \\
  -1 & 0 &0\\
  0&0&0
\end{array}\right)$ and $(\theta_{ij})^{\prime}=\left(\begin{array}{ccc}
  0&0   & 1 \\
  0 & 0 &0\\
  -1&0&0
\end{array}\right)$. Then $J\cong (2,3)_{3}$.
\end{enumerate}

\item \label{casej21rank1a12}   $[l_{e_{1}}]_{\mathcal{B}_{1}}=\left(\begin{array}{ccc}
0 &1     &0  \\
0     & 0 & 0\\
0&0&0\end{array}\right)$: Since we can assume that $\mathrm{rank}(l_{e_{2}})\leq1$, we must have $\delta_{3}\delta_{5}=0$
\begin{enumerate}[a)]
    \item \label{delta50} If $\delta_{5}=0$. If necessary, we may take a suitable change of basis in $J_{0}$ and we obtain $[l_{e_{2}}]_{\mathcal{B}_{1}}=\left(\begin{array}{ccc}
0 &0     &\delta_{4}  \\
0     & 0 & 0\\
0&0&0\end{array}\right)$. From $J^{s}(f_{2};e_{1},f_{2},f_{3})= J^{s}(f_{3};e_{2},f_{2},f_{3})=0$ and the nilpotency of $L_{f_{2}}$ and $L_{f_{2}+f_{3}}$, we get the following system:
\begin{equation}\label{sysJ21rank1}
\begin{alignedat}{2}
\omega_{12}&=0, &\qquad \omega_{13}&=0, \\
\delta_{4}\begin{pmatrix}\theta_{12}\\ \theta_{13}\end{pmatrix} &= 0.
\end{alignedat}    
\end{equation}

\begin{enumerate}[i)]
\item If $\delta_{4}=0$. Then $\omega_{23},\theta_{12},\theta_{13}$ and $\theta_{23}$ can assume arbitrary values in $\mathbb{C}$.
\begin{enumerate}[A)]
    \item If $(\omega_{23},\theta_{12},\theta_{13},\theta_{23})=0$, thus $J\cong (2,3)_{4}$.
    \item If $\omega_{23}=\theta_{13}=0$, we have two possibilities. If $\theta_{12}\neq0$, by changing $e_{2}^{\prime}\mapsto \theta_{12}e_{2}, f_{3}^{\prime}\mapsto \theta_{23}f_{1}+\theta_{12}f_{3}$ we obtain $J\cong(2,3)_{5}$. If  $\theta_{12}=0$, we may assume $\theta_{23}\neq 0$ and by replacing $e_{2}^{\prime}\mapsto \theta_{23}e_{2}$ we have $J\cong(2,3)_{6}$.
    
    \item If $\omega_{23}=0$ and $\theta_{13}\neq 0$, by changing $e_{2}^{\prime}\mapsto\theta_{13}e_{2}, \ f_{2}^{\prime}\mapsto-\theta_{23}\theta_{13}^{-1}f_{1}+f_{2}-\theta_{12}\theta_{13}^{-1}f_{3}$, we obtain $J\cong (2,3)_{7}$.
    \item If $\omega_{23}\neq0$ and $\theta_{13}=0$ we have two possibilities. If $\theta_{12}=0$, by changing $e_{1}^{\prime}\mapsto\omega_{23}e_{1}+\theta_{23}e_{2}, \ f_{1}^{\prime}\mapsto \omega_{23}f_{1}$ we obtain $J\cong(2,3)_{8}$. If $\theta_{12}\neq0$, by changing $e_{1}^{\prime}\mapsto \omega_{23}e_{1}+\theta_{23}e_{2}, \ e_{2}^{\prime}\mapsto \omega_{23}\theta_{12}e_{2}, \ f_{1}^{\prime}\mapsto\omega_{23}f_{1}$ we get $J\cong(2,3)_{9}$.
    \item If $\omega_{23}\neq0$ and $\theta_{13}\neq0$, by changing $e_{1}^{\prime}\mapsto \omega_{23}e_{1}+\theta_{23}e_{2}, \ e_{2}^{\prime}\mapsto\omega_{23}\theta_{13}e_{2}, \ f_{1}^{\prime}\mapsto\omega_{23}f_{1}, \ f_{2}^{\prime}\mapsto f_{2}-\theta_{12}\theta_{13}^{-1}f_{3}$ we obtain $J\cong (2,3)_{10}$.
\end{enumerate}
    \item If $\delta_{4}\neq0$. Then from \eqref{sysJ21rank1} we get $\theta_{12}=\theta_{13}=0$ and $\omega_{23},\theta_{23}\in\mathbb{C}$.
    \begin{enumerate}[A)]
    \item If  $(\omega_{23},\theta_{23})=0$, by replacing $f_{i}^{\prime}\mapsto\delta_{4}f_{i}$, with $i=1,2$, we have $J\cong(2,3)_{11}$.
     \item If $(\omega_{23},\theta_{23})\neq 0$, then $J$ is isomorphic to $(2,3)_{12}$ via the change of basis $e_{1}\mapsto-\omega_{23}\delta_{4}^{-1}e_{2}, \ e_{2}^{\prime}\mapsto -\omega_{23}e_{1}, \ f_{1}^{\prime}\mapsto-\omega_{23}f_{1}, \ f_{2}^{\prime}\mapsto f_{3}, \ f_{3}^{\prime}\mapsto f_{2}$, if $\theta_{23}= 0$, or $e_{1}^{\prime}\mapsto \delta_{4}\theta_{23}e_{1}, \ e_{2}^{\prime}\mapsto -\omega_{23}\delta_{4}\theta_{23}e_{1}-\delta_{4}\theta_{23}^{2}e_{2}, \ f_{1}^{\prime}\mapsto-\delta_{4}^{2}\theta_{23}^{2}f_{1}, \ f_{2}^{\prime}\mapsto -\delta_{4}\theta_{23}f_{2}+\omega_{23}f_{3}$, if $\theta_{23}\neq0$.
    \end{enumerate}
     
\end{enumerate}
   \item If $\delta_{5}\neq 0$. By replacing $e_{1}^{\prime}\mapsto e_{1}+\delta_{5}^{-1}e_{2}$, we obtain that $l_{e_{1}^{\prime}}$ has rank 2, and this case will be considered in \eqref{caseJ21rank2}.
\end{enumerate}
\item $[l_{e_{1}}]_{\mathcal{B}_{1}}=\left(\begin{array}{ccc}
0 &0     &1  \\
0     & 0 & 0\\
0&0&0\end{array}\right)$: As in the previous case, we must have $\delta_{3}\delta_{5}=0$.
\begin{enumerate}[a)]
    \item If $\delta_{5}=0$. Changing the roles of $f_{2}$ and $f_{3}$, we obtain the case studied in \eqref{delta50}.
    \item If $\delta_{5}\neq 0$. Taking the change of basis given by $e_{2}^{\prime}\mapsto -\delta_{4}\delta_{5}^{-1}e_{1}+\delta_{5}^{-1}e_{2}$, if necessary, we may suppose $[l_{e_{2}}]_{\mathcal{B}_{1}}=\left(\begin{array}{ccc}
0 &0     &0  \\
0     & 0 & 1\\
0&0&0\end{array}\right)$. From $J^{s}(f_{3};f_{1},f_{2},f_{3})=0$, $L_{f_3}^{4}(f_1)=(\omega_{13}^{2}+\omega_{23}\theta_{13})f_1$ and the nilpotency condition applied to the trace of $L_{f_{3}}^{2}$, $L_{f_2+f_3}^{2}$, $L_{f_1+f_3}^{2}$, we obtain the following system:
\begin{equation}
\begin{alignedat}{2}
\omega_{12}&=0, &\qquad \theta_{12}&=0,\\
\omega_{13}&=-\theta_{23}, &\qquad \omega_{13}^{2}+\omega_{23}\theta_{13}&=0. 
\end{alignedat}    
\end{equation}
\begin{enumerate}[i)]
    \item If $\theta_{13}=0$, then $\omega_{13}=\theta_{23}=0$. We split in two cases:
    \begin{enumerate}[A)]
       \item If $\omega_{23}=0$, by changing $f_{2}^{\prime}\mapsto f_{3}, \ f_{3}^{\prime}\mapsto f_{2}$, we obtain $J\cong (2,3)_{13}$.
        \item If $\omega_{23}\neq0$, by changing $e_{1}^{\prime}\mapsto e_{2}, \ e_{2}^{\prime}\mapsto\omega_{23}e_{1}, \ f_{1}^{\prime}\mapsto f_{2}, \ f_{2}^{\prime}\mapsto f_{3}, \ f_{3}^{\prime}\mapsto\omega_{23}f_{1}$, we get $J\cong(2,3)_{14}$.
    \end{enumerate}
    \item If $\theta_{13}\neq0$, then $\omega_{23}=-\omega_{13}^{2}\theta_{13}^{-1}$. Then $J$ is isomorphic to $J\cong(2,3)_{14}$ via the change of basis $e_{2}^{\prime}\mapsto \omega_{13}e_{1}+\theta_{13}e_{2}, \ f_{2}^{\prime}\mapsto f_{3}, \ f_{3}^{\prime}\mapsto \omega_{13}f_{1}+\theta_{13}f_{2}$.
\end{enumerate}
\end{enumerate}

    \item $[l_{e_{1}}]_{\mathcal{B}_{1}}=\left(\begin{array}{ccc}
0 &0     &0  \\
0     & 0 & 1\\
0&0&0\end{array}\right)$. A straightforward computation shows that, according to the values of $\delta_{3},\delta_{4}$ and $\delta_{5}$, we may perform a change of basis so that $l_{e_{1}}$ is similar to one of the forms (2), (3), or (5).

    \item \label{caseJ21rank2}$[l_{e_{1}}]_{\mathcal{B}_{1}}=\left(\begin{array}{ccc}
0 &1     &0  \\
0     & 0 & 1\\
0&0&0\end{array}\right)$. The Jordan superidentity in the basis elements $J^{s}(e_1;e_1,f_2,f_3)$, $J^{s}(f_3;e_1,f_2,f_3)$, $J^{s}(f_3,e_2,f_2,f_3)$ 
 provides the following conditions:

\begin{equation*}
\begin{alignedat}{2}
\omega_{12}&=0, &\qquad \theta_{12}&=0, \\
-2 \omega _{23}+\theta _{13} \delta _4-\theta _{23} \left(\delta _3+\delta _5\right)&=0, & \qquad \omega _{13}+\theta _{13} \delta _5&=0, \\
\left(\delta _3-\delta _5\right) \left(\theta _{23} \left(\delta _3-\delta _5\right)-\theta _{13} \delta _4\right)&=0.
\end{alignedat}
\end{equation*}
Under such conditions, we also obtain from the nilpotency of $L_{f_{3}}$ and $L_{f_{2}+f_{3}}$ that $\theta_{13}\delta_{4}=0$ and $\theta_{13}(\delta_{5}-\delta_{3})=0$, respectively, since $\mathrm{tr}(L_{f_{3}}^{2})=-3\theta _{13} \delta _4$ and $\mathrm{tr}(L_{f_{2}+f_{3}}^{2})=\theta _{13} \left(-2 \delta _3-3 \delta _4+2 \delta _5\right)$.

So we need to analyze all the possible solutions of the system:
\begin{equation}\label{sysJ21rank2}
\begin{alignedat}{2}
\omega_{12}&=0, &\qquad \theta_{12}&=0, \\
\omega _{23}+\theta _{23}\delta _5&=0, & \qquad \omega _{13}+\theta _{13} \delta _5&=0, \\
(\delta_{5}-\delta_{3})\begin{pmatrix}\theta_{13}\\ \theta_{23}\end{pmatrix} &= 0,& \qquad \theta_{13}\delta_{4}&=0.\\
\end{alignedat}
\end{equation}
\begin{enumerate}[a)]
\item If $\delta_3\neq \delta_{5}$. Then from \eqref{sysJ21rank2} we get $\theta_{13}=\theta_{23}=\omega_{13}=\omega_{23}=0$ and $\delta_{4}$ is arbitrary. In this case, $J$ is isomorphic to $(2,3)_{15}$ via the change of basis $e_{1}^{\prime}\mapsto-\delta_{5}e_{1}+e_{2}, \ e_{2}^{\prime}\mapsto-\delta_{3}e_{1}+e_{2}, \ f_{1}^{\prime}\mapsto(\delta_{3}-\delta_{5})f_{1}, \ f_{2}^{\prime}\mapsto -\delta_{4}(\delta_{3}-\delta_{5})^{-1}f_{1}+f_{2}, \ f_{3}^{\prime}\mapsto \delta_{4}(\delta_{3}-\delta_{5})^{-2}f_{2}-(\delta_{3}-\delta_{5})^{-1}f_{3}$.

\item If $\delta_{3}=\delta_{5}$ and $\delta_{4}=0$. Then from \eqref{sysJ21rank2} we get $\omega_{13}=-\theta_{13}\delta_{3}$ and $\omega_{23}=-\theta_{23}\delta_{3}$, while $\theta_{13},\theta_{23}$ and $\delta_{3}$ remain free.
\begin{enumerate}[i)]
    \item If $\theta_{13}=\theta_{23}=0$, by replacing $e_{2}^{\prime}\mapsto -\delta_{3}e_{1}+e_{2}$, we have $J\cong (2,3)_{16}$
    \item If $\theta_{13}=0$ and $\theta_{23}\neq 0$, by changing $e_{2}^{\prime}\mapsto-\theta_{23}\delta_{3}e_{1}+\theta_{23}e_{2}$, we obtain $J\cong(2,3)_{17}$.
    \item If $\theta_{13}\neq 0$, by changing  $e_{2}^{\prime}\mapsto -\theta_{13}\delta_{3}e_{1}+\theta_{13}e_{2}, \ f_{2}^{\prime}\mapsto-\theta_{23}\theta_{13}^{-1}f_{1}+f_{2}, \ f_{3}^{\prime}\mapsto -\theta_{23}\theta_{13}^{-1}f_{2}+f_{3}$, we get $J\cong(2,3)_{18}$.
\end{enumerate}
\item $\delta_{3}= \delta_{5}$ and $\delta_{4}\neq 0$. Then from \eqref{sysJ21rank2} we get $\theta_{13}=\omega_{13}=0$ and $\omega_{23}=-\theta_{23}\delta_{3}$, whereas $\theta_{23}$ and $\delta_{3}$ are arbitrary.
 \begin{enumerate}[i)]
  \item If $\theta_{23}=0$, by changing $e_{1}^{\prime}\mapsto\sqrt{\delta_{4}}e_{1}, \ e_{2}^{\prime}\mapsto -\delta_{3}e_{1}+e_{2}, \ f_{1}^{\prime}\mapsto\delta_{4}f_{1}, \ f_{2}^{\prime}\mapsto\sqrt{\delta_{4}}f_{2}$, we obtain $J\cong(2,3)_{19}$.
   \item If  $\theta_{23}\neq0$, by changing $e_{1}^{\prime}\mapsto \theta_{23}\delta_{4}e_{1}, \ e_{2}^{\prime}\mapsto-\theta_{23}^{2}\delta_{4}\delta_{3}e_{1}+\theta_{23}^{2}\delta_{4}e_{2}, \ f_{1}^{\prime}\mapsto \theta_{23}^{2}\delta_{4}^{2}f_1, \ f_{2}^{\prime}\mapsto \theta_{23}\delta_{4}f_{2}$, we get $J\cong(2,3)_{20}$. 
    \end{enumerate}
   
\end{enumerate}

\end{enumerate}

\underline{$J_{0}=J_{2,2}$:} The superidentity $J^{s}(e_1;e_1,e_1,z)=0$, $z\in J_{1}$, gives us $2l_{e_1}^{3}=3l_{e_1}l_{e_2}=3l_{e_2}l_{e_1}$. Since $l_{e_{1}}$ is nilpotent and $\mathrm{dim}\,J_{1}=3$, we have $l_{e_{1}}^{3}=0$. Hence 
\begin{equation}\label{operadorescaso23naotrivial}
l_{e_1}l_{e_2}=l_{e_2}l_{e_1}=0.    
\end{equation}

Let us assume that $\mathcal{B}_1=\{f_{1},f_{2},f_{3}\}$ is such that $[l_{e_{2}}]_{\mathcal{B}_1}$ is in Jordan normal form, that is, $[l_{e_{2}}]_{\mathcal{B}_{1}}=\left(\begin{array}{ccc}
0 &\delta_1     &0  \\
0     & 0 & \delta_2\\
0&0&0
\end{array}\right)$, where $\delta_{1},\delta_{2}\in\{0,1\}$. We keep $f_{i}f_{j}=\omega_{ij}e_1+\theta_{ij}e_2$. Up to matrix similarity, we have the following possibilities:

\begin{enumerate}[1)]
\item  $[l_{e_{2}}]_{\mathcal{B}_{1}}=\left(\begin{array}{ccc}
0 &0     &0  \\
0     & 0 & 0\\
0&0&0
\end{array}\right)$. In this case, with a suitable change of basis $\mathcal{B}_{1}^{\prime}$, if necessary, we can assume that $[l_{e_1}]_{\mathcal{B}^{\prime}_{1}}$ is in Jordan normal form. Up to matrix similarity, we have the following possibilities:
\begin{enumerate}[a)]
\item $[l_{e_1}]_{\mathcal{B}^{\prime}_{1}}=\left(\begin{array}{ccc}
   0&0 &0  \\
  0&0 &0 \\
  0&0&0
\end{array}\right)$: 

In this case, Jordan's superidentity, as well as the nilpotency of $L_{f}$, $f\in J_{1}$, do not provide conditions for the parameters. We use again the Jordan-Kronecker Theorem in order to obtain all non-isomorphic possibilities for this case. 

\begin{enumerate}[i)]
    \item $(\omega_{ij})^{\prime}=0$ and $(\theta_{ij})^{\prime}=0$. In this case, $J\cong(2,3)_{21}$.
    \item $(\omega_{ij})^{\prime}=\left(\begin{array}{ccc}
  0&1   & 0 \\
  -1 & 0 &0\\
  0&0&0
\end{array}\right)$ and $(\theta_{ij})^{\prime}=0$. Thus $J\cong(2,3)_{22}$.
\item  $(\omega_{ij})^{\prime}=\left(\begin{array}{ccc}
  0&\lambda   & 0 \\
  -\lambda & 0 &0\\
  0&0&0
\end{array}\right)$, $\lambda\neq0$, and $(\theta_{ij})^{\prime}=\left(\begin{array}{ccc}
  0&1   & 0 \\
  -1 & 0 &0\\
  0&0&0
\end{array}\right)$. By changing $e_{1}^{\prime}\mapsto \lambda e_{1}+e_{2}, \ e_{2}^{\prime}\mapsto \lambda^{2}e_{2}$, we obtain $J\cong(2,3)_{22}$.
\item $(\omega_{ij})^{\prime}=0$ and $(\theta_{ij})^{\prime}=\left(\begin{array}{ccc}
  0&1   & 0 \\
  -1 & 0 &0\\
  0&0&0
\end{array}\right)$. Thus $J\cong (2,3)_{23}$.
\item $(\omega_{ij})^{\prime}=\left(\begin{array}{ccc}
  0&1   & 0 \\
  -1 & 0 &0\\
  0&0&0
\end{array}\right)$ and $(\theta_{ij})^{\prime}=\left(\begin{array}{ccc}
  0&0   & 1 \\
  0 & 0 &0\\
  -1&0&0
\end{array}\right)$. Then $J\cong (2,3)_{24}$.
\end{enumerate}
\item $[l_{e_1}]_{\mathcal{B}_{1}^{\prime}}=\left(\begin{array}{ccc}
   0&1 &0  \\
  0&0 &0 \\
  0&0&0
\end{array}\right)$: Computing the Jordan superidentity for the basis elements $J^{s}(f_2;e_1,e_1,f_2)$ and $J^{s}(f_3;e_1,e_1,f_2)$ gives the equations  $\omega_{12}=0$ and $\omega_{13}=0$. The nilpotency of $l_f$, $f\in J_{1}$, does not impose further restrictions. Thus, the parameters $\omega_{23}$, $\theta_{12}$, $\theta_{13}$, and $\theta_{23}$ remain free, and we separate into the following cases:
\begin{enumerate}[i)]
    \item $\omega_{23}=\theta_{13}=0$:
    \begin{enumerate}[A)]
        \item If $\theta_{12}=0$ and $\theta_{23}=0$, then $J\cong (2,3)_{25}$. If $\theta_{12}=0$ and $\theta_{23}\neq0$, we have $J\cong(2,3)_{26}$ by replacing $f_{3}^{\prime}\mapsto\theta_{23}^{-1}f_{3}$.
        \item If $\theta_{12}\neq0$, by changing $e_{1}^{\prime}\mapsto\theta_{12}e_{1}, \ e_{2}^{\prime}\mapsto\theta_{12}^{2}e_{2}, \ f_{1}^{\prime}\mapsto\theta_{12}f_{1}, \ f_{3}^{\prime}\mapsto\theta_{23}f_{1}+\theta_{12}f_{3}$, we obtain $J\cong (2,3)_{27}$.
    \end{enumerate}
    \item  $\omega_{23}=0$ and $\theta_{13}\neq0$. In this case, we have that $J$ is isomorphic to $(2,3)_{28}$ via the change of basis $f_{2}^{\prime}\mapsto -\theta_{23}\theta_{13}^{-1}f_{1}+f_{2}-\theta_{12}\theta_{13}^{-1}f_{3}, \ f_{3}^{\prime}\mapsto \theta_{13}^{-1}f_{3}$.
    \item $\omega_{23}\neq0$ and $\theta_{13}=0$:
    \begin{enumerate}[A)]
        \item If $\theta_{12}=0$, by changing $e_{1}^{\prime}\mapsto \omega_{23}e_{1}+\theta_{23}e_{2}, \ e_{2}^{\prime}\mapsto \omega_{23}^{2}e_{2}, \ f_{1}^{\prime}\mapsto \omega_{23}f_{1}$, we obtain $J\cong(2,3)_{29}$.
        \item If $\theta_{12}\neq 0$, by changing $e_{1}^{\prime}\mapsto \theta_{12}e_{1}+\theta_{12}\theta_{23}\omega_{23}^{-1}e_{2}, \ e_{2}^{\prime}\mapsto \theta_{12}^{2}e_{2}, \ f_{1}^{\prime}\mapsto\theta_{12}f_{1},\ f_{3}^{\prime}\mapsto\theta_{12}\omega_{23}^{-1}f_{3}$, we get $J\cong(2,3)_{30}$.
    \end{enumerate}
    \item $\omega_{23}\neq 0$ and $\theta_{13}\neq 0$. By taking $\lambda=\theta_{13}\omega_{23}^{-1}$ and applying the change of basis $e_{1}^{\prime}\mapsto \omega_{23}e_{1}+\theta_{23}e_{2}, \ e_{2}^{\prime}\mapsto \omega_{23}^{2}e_{2}, \ f_{1}^{\prime}\mapsto\omega_{23}f_{1}, \ f_{2}^{\prime}\mapsto f_{2}-\theta_{12}\theta_{13}^{-1}f_{3}$, we obtain $J\cong(2,3)_{31}^{\lambda}$.
\end{enumerate}
    \item $[l_{e_1}]_{\mathcal{B}_{1}^{\prime}}=\left(\begin{array}{ccc}
   0&1 &0  \\
  0&0 &1 \\
  0&0&0
\end{array}\right)$: From $J^{s}(e_1;e_1,f_2,f_3)=J^{s}(f_3;e_1,e_1,f_2)=J^{s}(f_3,e_1,e_1,f_3)=0$, we obtain the following system:
\begin{equation} \label{sysJ232}
\begin{alignedat}{2}
\omega_{12}&=0, &\qquad \theta_{12}&=0, \\
\omega_{13}&=0, &\qquad \omega_{23}&=0.
\end{alignedat}
\end{equation}
As in the previous case, the nilpotency of $l_f$, $f\in J_{1}$, does not impose further restrictions. Thus, the parameters $\theta_{13}$ and $\theta_{23}$ remain free and we separate into the following cases:
\begin{enumerate}[i)]
    \item If $\theta_{13}=\theta_{23}=0$. Thus $J\cong(2,3)_{32}$.
        \item If $\theta_{13}\neq 0$, by changing $f_{1}^{\prime}\mapsto (\sqrt{\theta_{13}})^{-1}f_{1}, \ f_{2}^{\prime}\mapsto -\theta_{23}\theta_{13}^{-3/2}f_{1}+(\sqrt{\theta_{13}})^{-1}f_{2}, \ f_{3}^{\prime}\mapsto -\theta_{23}\theta_{13}^{-3/2}f_{2}+(\sqrt{\theta_{13}})^{-1}f_{3}$, we obtain $J\cong (2,3)_{33}$.
    \item If $\theta_{13}= 0$ and  $\theta_{23}\neq0$, by replacing $f_{i}^{\prime}\mapsto (\sqrt{\theta_{23}})^{-1}f_{i}$, with $i=1,2,3$, we have $J\cong(2,3)_{34}$.

\end{enumerate}

\end{enumerate}

    \item $[l_{e_{2}}]_{\mathcal{B}_1}=\left(\begin{array}{ccc}
0 &1     &0  \\
0     & 0 &0\\
0&0&0
\end{array}\right)$. By \eqref{operadorescaso23naotrivial} and the nilpotency of $l_{e_{1}}$ we have that $[l_{e_1}]_{\mathcal{B}_1}=\left(\begin{array}{ccc}
0 &\delta_3     &\delta_5  \\
0&0&0\\
0     &  \delta_4&0
\end{array}\right)$. 
Verifying the Jordan superidentity for the basis elements $J^{s}(f_2;e_1,e_2,f_2)$, $
J^{s}(f_3;e_1,e_2,f_3)$, $J^{s}(f_3;e_1,e_1,f_2)$, $J^{s}(f_2;e_1,e_1,f_3)$, $J^{s}(f_2;e_1,e_1,f_2)$, we obtain the following system:
\begin{equation} \label{sysJ231}
\begin{alignedat}{2}
\omega_{12}&=0, &\qquad \omega_{13}&=0, \\
\theta _{13} &=0, &\qquad \theta _{12}&=-2 \omega _{23} \delta _4.\\
\end{alignedat}
\end{equation}
Under such conditions, we can verify that the nilpotency of $L_{f_{2}}$ implies: \[\theta_{12}=-2 \omega_{23} \delta _4=0,\] 
since $L_{f_{2}}^{5}(e_{2})=4\omega_{23}^{2}\delta_{4}^{2}f_{1}$, and no further conditions are imposed on the parameters $\delta_{3},\delta_{4},\delta_{5},\omega_{23}$ and $\theta_{23}$. We have the following possibilities:
\begin{enumerate}[a)]
    \item $\omega_{23}=\delta_{4}=0$ and $ \delta_5=0$:
    \begin{enumerate}[i)]
        \item If $\theta_{23}=0$, by replacing $e_{1}^{\prime}\mapsto e_{1}-\delta_{3}e_{2}$, we have $J\cong(2,3)_{35}$.
        \item If $\theta_{23}\neq0$, by changing $e_{1}^{\prime}\mapsto e_{1}-\delta_{3}e_{2}, \ f_{3}^{\prime}\mapsto \theta_{23}^{-1}f_{3}$, we obtain $J\cong (2,3)_{36}$.
    \end{enumerate}
        \item $\omega_{23}\neq0$, $\delta_{4}=0$ and $\delta_{5}=0$:
    \begin{enumerate}[i)]
        \item If $\theta_{23}=-\omega_{23}\delta_{3}$, by changing $e_{1}^{\prime}\mapsto \omega_{23}e_{1}+\theta_{23}e_{2}, \ e_{2}^{\prime}\mapsto \omega_{23}^{2}e_{2}, \ f_{1}^{\prime}\mapsto \omega_{23}^{2}f_{1}$, we obtain $J\cong(2,3)_{37}$.
        \item If $\theta_{23}\neq -\omega_{23}\delta_{3}$, then $k=\theta_{23}+\omega_{23}\delta_{3}\neq0$ and by changing $e_{1}^{\prime}\mapsto k\omega_{23}^{-1}e_{1}+k\theta_{23}\omega_{23}^{-2}e_{2}, \ e_{2}^{\prime}\mapsto k^{2}\omega_{23}^{-2}e_{2}, \ f_{1}^{\prime}\mapsto k^{2}\omega_{23}^{-2}f_{1}, \ f_{3}^{\prime}\mapsto k\omega_{23}^{-2}f_{3}$, we get $J\cong(2,3)_{38}$.
\end{enumerate}
    \item $\omega_{23}=\delta_{4}=0$ and $\delta_5\neq0$:
    \begin{enumerate}[i)]
        \item If $\theta_{23}=0$, by changing $f_{2}^{\prime}\mapsto f_{2}-\delta_{3}\delta_{5}^{-1}f_{3}, \ f_{3}^{\prime}\mapsto\delta_{5}^{-1}f_{3}$, we obtain $J\cong(2,3)_{39}$.
        \item If $\theta_{23}\neq0$, by changing $e_{1}^{\prime}\mapsto e_{1}-\delta_{3}e_{2}, \ f_{i}^{\prime}\mapsto\sqrt{\delta_{5}\theta_{23}^{-1}}f_{i}$, with $i=1,2$, and $f_{3}^{\prime}\mapsto (\sqrt{\delta_{5}\theta_{23}})^{-1}f_{3}$, we get $J\cong(2,3)_{40}$.
    \end{enumerate}
    \item $\omega_{23}\neq0$, $\delta_{4}=0$ and $\delta_{5}\neq0$: In this case, we obtain $J\cong(2,3)_{41}$ via the change of basis $e_{1}^{\prime}\mapsto e_{1}+\theta_{23}\omega_{23}e_{2}, \ f_{1}^{\prime}\mapsto \sqrt{\delta_{5}\omega_{23}^{-1}}f_{1}, \ f_{2}^{\prime}\mapsto\sqrt{\delta_{5}\omega_{23}^{-1}}f_{2}-k\delta_{5}^{-1/2}\omega_{23}^{-3/2}f_{3}, \ f_{3}^{\prime}\mapsto (\sqrt{\omega_{23}\delta_{5}})^{-1}f_{3}$, with $k=\theta_{23}+\omega_{23}\delta_{3}$.
    \item $\omega_{23}=0,\delta_4\neq0$ and $\delta_5=0$:
    \begin{enumerate}[i)]
        \item If $\theta_{23}=0$, by replacing $f_{3}^{\prime}\mapsto \delta_{3}f_{1}+\delta_{4}f_{3}$, we have $J\cong(2,3)_{42}$.
        \item If $\theta_{23}\neq0$, by changing $e_{1}^{\prime}\mapsto \theta_{23}\delta_{4}e_{1}, \ e_{2}^{\prime}\mapsto\theta_{23}^{2}\delta_{4}^{2}e_{2}, \ f_{1}^{\prime}\mapsto \theta_{23}^{2}\delta_{4}^{2}f_{1}, \ f_{3}^{\prime}\mapsto \theta_{23}\delta_{4}(\delta_{3}f_{1}+\delta_{4}f_{3})$, we obtain $J\cong(2,3)_{44}^{0}$.
    \end{enumerate}
    \item $\omega_{23}=0,\delta_4\neq0$ and $\delta_5\neq0$:
    \begin{enumerate}[i)]
        \item If $\theta_{23}=0$, by taking $\gamma=\delta_{4}\delta_{5}$ and replacing $f_{3}^{\prime}\mapsto \delta_{3}f_{1}-\delta_{4}f_{3}$, we have $J\cong(2,3)_{43}^{\gamma}$.
        \item If $\theta_{23}\neq 0$, by taking $\phi=\delta_4\delta_5$ and applying the change of basis $f_{i}^{\prime}\mapsto(\sqrt{\theta_{23}\delta_{4}})^{-1}f_{i}$, with $i=1,2$, and $f_{3}^{\prime}\mapsto \delta_{3}(\sqrt{\theta_{23}\delta_{4}})^{-1}f_{1}+\sqrt{\delta_{4}\theta_{23}^{-1}}f_{3}$, we obtain $J\cong(2,3)_{44}^{\phi}$.
    \end{enumerate}

\end{enumerate}
    \item $[l_{e_{2}}]_{\mathcal{B}_{1}}=\left(\begin{array}{ccc}
0 &1     &0  \\
0     & 0 & 1\\
0&0&0
\end{array}\right)$. By \eqref{operadorescaso23naotrivial}, we have $[l_{e_1}]_{\mathcal{B}_{1}}=\delta_{3} E_{13}$, $\delta_{3}\in \mathbb{C}$. But in this case, $J^{s}(e_2;e_1,e_1,f_3)=f_1$. Therefore, $l_{e_{2}}$ cannot assume this Jordan normal form.
\end{enumerate}

Finally, it remains to verify that $J=(2,3)_{i}^{k_{1}}$ and $J^{\prime}=(2,3)_{i}^{k_{2}}$ are isomorphic if and only if $k_{1}=k_{2}$, for $i\in\{31,43,44\}$. In fact, assume that there exists an isomorphism of superalgebras $\varphi: J\to J^{\prime}$. With respect to the homogeneous
basis $\mathcal{B}=\{e_{1},e_{2},f_{1},f_{2},f_{3}\}$, we can write:
\[[\varphi]_{\mathcal{B}}=\left(\begin{array}{cc}
  T   & 0 \\
  0   & S
\end{array}\right), \, T=(t_{ij})\in \mathrm{GL}_{2}(\mathbb{C}), \, S=(s_{ij})\in \mathrm{GL}_{3}(\mathbb{C}).\]

Note that $T$ represents an automorphism of $J_{2,2}$, then $t_{12}=0$ and $t_{22}=t_{11}^{2}\neq0$. First, consider the case $i=31$. From $S(e_{1}f_{2})=T(e_{1})S(f_{2})$, comparing the coefficients with respect to the basis, we get $s_{21}=s_{31}=0$ and $s_{11}=t_{11}s_{22}\neq0$. From $S(e_{1}f_{3})=T(e_{1})S(f_{3})$, we obtain $s_{23}=0$. Applying these relations recursively and using $T(f_{2}f_{3})=S(f_{2})S(f_{3})$ and  $T(f_{1}f_{3})=S(f_{1})S(f_{3})$ we get $t_{11}=s_{22}s_{33}$ and $k_{1}t_{11}^{2}=s_{11}s_{33}k_{2}$, respectively. Then $t_{11}^{2}k_{1}=t_{11}^{2}k_{2}$, which implies $k_{1}=k_{2}$. For $i=43$ and $i=44$, a similar argument applies, using the conditions $S(e_{1}f_{2})=T(e_{2})S(f_{2})$, $S(e_{1}f_{2})=T(e_{1})S(f_{2})$ and $S(e_{1}f_{3})=T(e_{1})S(f_{3})$. In this way, we can also conclude $k_{1}=k_{2}$. 
\end{proof}

We remark that, by analyzing the invariants presented in Table \ref{table:caso14} and Table \ref{table:caso32}, and considering that the tables are divided according to the even part, it is straightforward to check that the superalgebras listed in them are pairwise non-isomorphic. The same applies to Table \ref{table:caso23}, except for the  following pairs of superalgebras whose last four columns in Table \ref{table:caso23} coincide.
\begin{enumerate}[1)]
    \item $(2,3)_{5}\not\cong(2,3)_{8}$, $(2,3)_{10}\not\cong(2,3)_{18}$, $(2,3)_{12}\not\cong(2,3)_{19}$, $(2,3)_{27}\not\cong (2,3)_{32}$ and $(2,3)_{39}\not\cong(2,3)_{43}^{\gamma}$: 
    it is simple to verify by observing the graded dimensions of the second and third powers of each superalgebra.
\item $(2,3)_{30}\not\cong(2,3)_{31}^{\lambda}$: in this case, we can apply a similar argument used to verify the isomorphisms in the families above, and conclude that there is no $\lambda\in \mathbb{C}^{\ast}$ for which the isomorphism occurs.   
\end{enumerate}

 \section{The geometric classification}\label{classgeo}
 For each type $(m,n)$ of five-dimensional nilpotent Jordan superalgebras determined in the previous section, we now describe all degenerations between  superalgebras in the corresponding variety $\mathcal{NJS}^{(m,n)}$ and justify all the cases of non-degeneration. As a consequence, we determine its irreducible components.

 We remark that the relation $J\rightarrow J^{\prime}$ induces a partial order on $\mathcal{NJS}^{(m,n)}$. To determine the associated Hasse diagram, we first apply item \ref{item:aut}) of Lemma \ref{invariants}. We therefore omit from the tables of non-degenerations the cases that follow immediately from the dimensions of the corresponding automorphism groups. In the tables of degenerations, we list only those that cannot be obtained by transitivity.
 
 In order to prove degenerations, we follow the method employed in \cite{4JorsupI} to determine deformations. Let $J\in \mathcal{NJS}^{(m,n)}$ and consider a family of parametrized matrices $g(t)\in\mathrm{Mat}_{m}(\mathbb{C}(t))\times\mathrm{Mat}_{n}(\mathbb{C}(t))$, where $\mathbb{C}(t)$ denotes the field of fractions of the polynomial ring $\mathbb{C}[t]$, such that $g(t)\in G$ for any $t\in \mathbb{C}^{\ast}$. Let $J_{t}=(c_{ij}^{k}(t),\rho_{ij}^{k}(t),\Gamma_{ij}^{k}(t))$ be the image of $J$ under the action of $g(t)$. If for $t=0$ we obtain $J^{\prime}=(c_{ij}^{k}(0),\rho_{ij}^{k}(0),\Gamma_{ij}^{k}(0))$, then $J\rightarrow J^{\prime}$. For instance, any superalgebra degenerates to the trivial one by taking $g(t)=t^{-1}(I_{m}\times I_{n})$, where $I_{k}$ denotes the identity in $\mathrm{GL}_{k}(\mathbb{C})$. In Tables \ref{table:deg14}, \ref{table:deg32} and \ref{table:degcaso23} , when we prove that $J\rightarrow J^{\prime}$, we describe the parametrized change of basis $\{E_{1}^{t},\dots,E_{m}^{t},F_{1}^{t},\dots,F_{n}^{t}\}$ induced by the action of $g(t)$.

 



Applying Lemma \ref{invariants}, Lemma \ref{newinvariants} and Remark \ref{nondegalg}, we were able to describe almost all non-degenerations, except for one, namely $(2,3)_{30}\nrightarrow (2,3)_{7}$. In this case, we exhibit a closed subset $\mathcal{R}$ of $\mathcal{NJS}^{(2,3)}$ defined by polynomial equations in the structure constants such that $O((2,3)_{30})$ intersects $\mathcal{R}$, whereas $O((2,3)_{7})$ does not.  We conclude the non-degeneration by applying the following result.
\begin{proposition}\cite[Proposition 1.17]{Borel}
Let $G$ be a reductive algebraic group over $\mathbb{C}$ with Borel subgroup $B$, and let $X$ be an algebraic set on which $G$ acts rationally. For $x\in X$, we have $\overline{G\cdot x} =G\cdot\overline{B\cdot x}$.
\end{proposition}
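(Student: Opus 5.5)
The statement is the classical theorem of Borel theory, $\overline{G\cdot x}=G\cdot\overline{B\cdot x}$. I would prove it via the completeness of the flag variety $G/B$. Since $G\cdot\overline{B\cdot x}$ is $G$-stable and contains $x$, and $\overline{G\cdot x}$ is closed, $G$-stable and contains $\overline{B\cdot x}$, the inclusion $G\cdot\overline{B\cdot x}\subseteq\overline{G\cdot x}$ is immediate. So the whole content is that $G\cdot\overline{B\cdot x}$ is Zariski closed in $X$. Once that is shown, it is a closed set containing $G\cdot x$, which gives the reverse inclusion.

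To prove closedness, set $Y=\overline{B\cdot x}$. This is a closed $B$-stable subset of $X$, because the closure of a $B$-stable set is $B$-stable, each $b\in B$ acting as a homeomorphism. Consider the closed subset
\[
Z=\{(gB,y)\in G/B\times X \mid g^{-1}y\in Y\}.
\]
The condition depends only on the coset $gB$ because $Y$ is $B$-stable. To see that $Z$ is closed, I would pull it back along the quotient map $G\times X\to G/B\times X$. There it becomes $\{(g,y)\mid g^{-1}y\in Y\}$, which is closed as the preimage of $Y$ under the morphism $(g,y)\mapsto g^{-1}y$. The quotient map $G\times X\to G/B\times X$ is open and surjective, being a locally trivial $B$-bundle, so closedness descends to $Z$. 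The projection of $Z$ onto the second factor $X$ is exactly $\{y\mid y\in gY \text{ for some } g\}=G\cdot Y$.

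The key input is that $G/B$ is a projective, hence complete, variety. This holds because $B$ is a Borel subgroup, that is, a parabolic subgroup; over $\mathbb{C}$ this is the standard Borel fixed point/flag variety theorem, which I would cite rather than reprove. By completeness, the projection $G/B\times X\to X$ is a closed map. Hence $G\cdot Y$, the image of the closed set $Z$, is closed in $X$, and the proof is finished.

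The main obstacle is the descent step: checking that $Z$ is closed in $G/B\times X$. This needs either openness of the quotient map $G\to G/B$, which holds for quotients by closed subgroups, or an equivalent description of $Z$ as the image of the closed set $\{(g,y)\mid g^{-1}y\in Y\}$ under the $B$-action on $G\times X$. I would take the first route, citing the standard facts on quotient morphisms. Reductivity of $G$ is not actually needed beyond the existence of $B$ with $G/B$ complete. In our application $G=\mathrm{GL}_m(\mathbb{C})\times\mathrm{GL}_n(\mathbb{C})$, and $B$ is the product of the upper triangular subgroups.
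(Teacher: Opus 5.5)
Your proof is correct. The paper gives no proof of this proposition; it quotes it from Borel. Your argument is the standard proof behind that citation: show $G\cdot Y$ is closed for a closed $B$-stable $Y$ by projecting the closed set $Z\subseteq G/B\times X$ along the complete factor $G/B$.

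The descent step is handled correctly. Local triviality of $G\to G/B$ is exactly what makes $G\times X\to G/B\times X$ open for the Zariski topology on the product, and openness of $G\to G/B$ alone would not give this. As you note, the argument works verbatim with any parabolic subgroup in place of $B$ and uses no reductivity.

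One small caution concerns your closing aside about the application. There, the closed set $\mathcal{R}$ (given by $J_0\cdot\langle f_2,f_3\rangle=0$ and $f_2f_3=0$) must be stable under the Borel subgroup you use. With the basis ordered $f_1,f_2,f_3$, it is stable under the lower triangular subgroup of $\mathrm{GL}_3(\mathbb{C})$, which preserves $\langle f_2,f_3\rangle$, but not under the upper triangular one. This does not affect the proposition, since all Borel subgroups are conjugate; it only determines which $B$ to take when applying it.
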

For brevity, in what follows, we denote $(m,n)_i\rightarrow(m,n)_{j}$ and $(m,n)_{k}\nrightarrow(m,n)_{l}$ simply by $i\rightarrow j$ and $k\nrightarrow l$, respectively. Moreover, $k_{1}, \ldots, k_{r} \nrightarrow l_{1}, \ldots, l_{s}$  means that $k_{p} \nrightarrow l_{q}$ for all $1\leq p\leq r$ and $1\leq q\leq s$. In each case, the type $(m,n)$ is indicated in the corresponding table. When we apply item \ref{item:PI}) of Lemma \ref{invariants} to justify a non-degeneration, we specify a polynomial $p(x_{1},\dots,x_{m},y_{1},\dots,y_{n})\in \mathbb{C}\{ X\cup Y\}$, where $X$ and $Y$ denote the sets of even and odd variables, respectively, such that $p$ is a graded identity for $J$, but not for $J^{\prime}$.

\subsection{Varieties $\mathcal{NJS}^{(0,5)}$, $\mathcal{NJS}^{(5,0)}$ and $\mathcal{NJS}^{(4,1)}$}
First, we observe that the variety $\mathcal{NJS}^{(0,5)}$ consists of a single point, so there are no degenerations to consider. On the other hand, $\mathcal{NJS}^{(5,0)}$ corresponds to the variety of five-dimensional nilpotent Jordan algebras, which are geometrically classified in \cite{Geo5Jor}. 

For the variety $\mathcal{NJS}^{(4,1)}$, note that the action of $G$ is completely determined by the action of $\mathrm{GL}_{4}(\mathbb{C})$ on the even part, since all products involving the odd part are zero. In particular, $\mathrm{GL}_{1}(\mathbb{C})$ acts trivially. Therefore,  $J,J^{\prime}\in \mathcal{NJS}^{(4,1)}$ lie in the same orbit if and only if their corresponding even parts lie in the same orbit in the variety  of four-dimensional nilpotent Jordan algebras, $\mathcal{J}\mathrm{orN}_{4}$, which was determined in \cite{Anc}, with some corrections pointed out in \cite{Degnilp}. Consequently, $J\rightarrow J^{\prime}$ if and only if $J_{0}\rightarrow J_{0}^{\prime}$, and the Hasse diagram of $\mathcal{NJS}^{(4,1)}$ coincides with that of $\mathcal{J}\mathrm{orN}_{4}$.
\begin{figure}[H]
\centering
\includegraphics[scale=0.2]{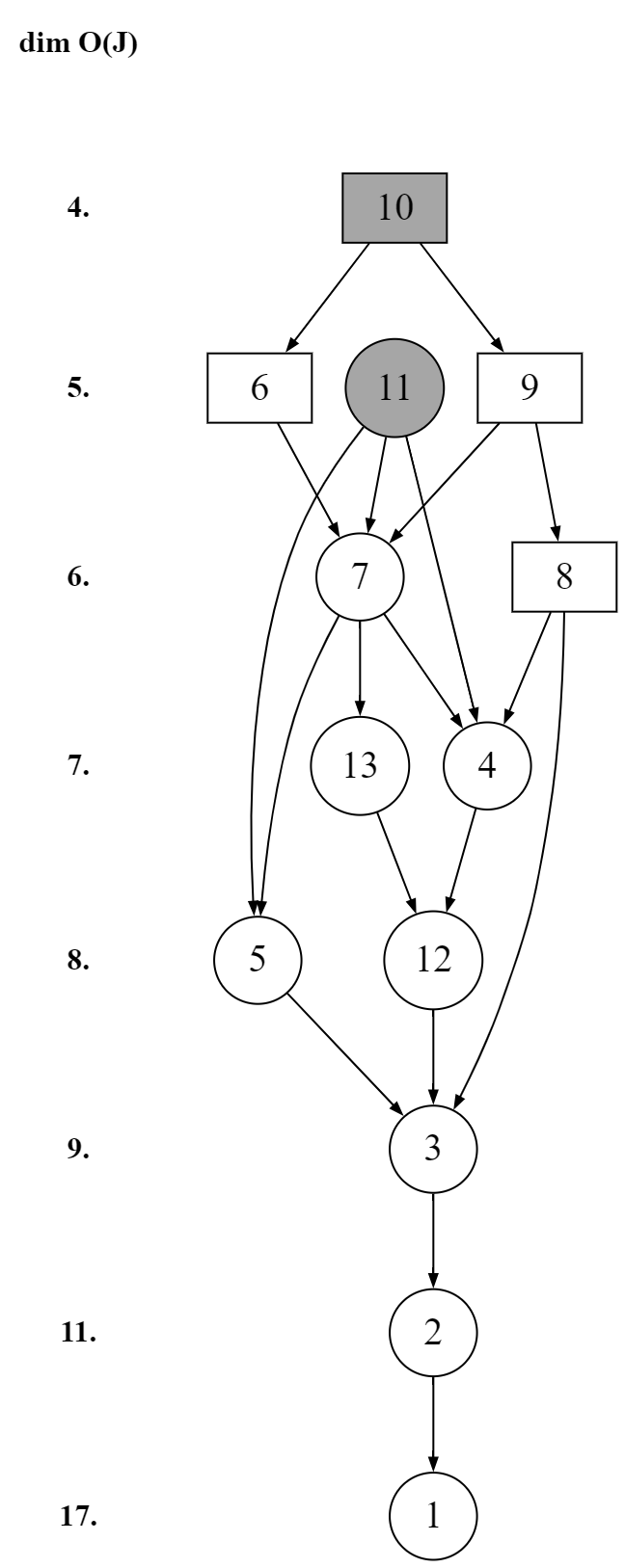}
\caption{Hasse diagram of degenerations for nilpotent Jordan superalgebras of type $(4,1)$.}
\end{figure} 
For the Hasse diagrams, we use the following notation: the gray color indicates a single superalgebra (or a parametrized family of superalgebras) such that the closure of its orbit (or the union of the closures of their orbits) determines an irreducible component of the variety, and a circle represents an associative superalgebra. For abreviation, we write $i$ instead of $(m,n)_{i}$.
\subsection{Variety $\mathcal{NJS}^{(1,4)}$}

\begin{theorem}\label{theo:irrecom14}
The variety $\mathcal{NJS}^{(1,4)}$ has four irreducible components given by 
\[\mathcal{C}_{1}=\overline{O((1,4)_{3})}, \ \mathcal{C}_{2}=\overline{O((1,4)_{6})}, \ \mathcal{C}_{3}=\overline{O((1,4)_{7})}, \ \mathcal{C}_{4}=\overline{O((1,4)_{9})}.\]
In particular, the rigid superalgebras are exactly $(1,4)_{i}$, with $i\in\{3,6,7,9\}$.
\end{theorem}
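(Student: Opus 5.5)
The variety $\mathcal{NJS}^{(1,4)}$ consists of the finitely many orbits of Table \ref{table:caso14}, so it is the union of their closures. It therefore suffices to establish two facts. First, every superalgebra of type $(1,4)$ lies in the orbit closure of one of $(1,4)_{3},(1,4)_{6},(1,4)_{7},(1,4)_{9}$. Second, none of these four lies in the orbit closure of any other superalgebra of the table. Given both, the four closures are irreducible, not contained in one another, and cover the variety, so they are exactly the irreducible components. Each of the four orbits is then open in the variety: its complement is the finite union of the closures of the orbits other than itself, which is closed, since none of those closures contains it. This gives the rigidity statement. Conversely, any other orbit lies in the closure of one of these four and so cannot be open.

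For the first part, I will exhibit explicit parametrized changes of basis $E_1^t,F_1^t,\dots,F_4^t$, and use transitivity, going down the Hasse diagram ordered by $\dim\mathrm{Aut}$. The natural candidates are the following.
\begin{itemize}
\item $(1,4)_6\to(1,4)_5$: take $F_4^t=t^{-1}f_2+f_4$, or a similar perturbation that merges the products $f_2f_3$ and $f_3f_4$.
\item $(1,4)_6\to(1,4)_3$: the product $e_1f_2=f_1$ can be scaled to $0$, for instance via $F_1^t=t^{-1}f_1$ keeping $f_3f_4=e_1$ and adjusting $f_1f_2$. I would rather obtain $(1,4)_3\to(1,4)_2$ and $(1,4)_6\to(1,4)_4$ first.
\item $(1,4)_5\to(1,4)_4$ and $(1,4)_5\to(1,4)_2$: kill either $\Gamma$ or $l_{e_1}$ by rescaling $f_3$ or $e_1$.
\item $(1,4)_9\to(1,4)_8$, $(1,4)_9\to(1,4)_5$, and $(1,4)_7\to(1,4)_8$: the last via $F_2^t=f_2$, $F_3^t=tf_3$, $F_4^t=f_4$, so that the chain $f_3\mapsto f_2\mapsto f_1$ splits.
\item $(1,4)_8\to(1,4)_4$.
\end{itemize}
Each candidate is checked by computing the structure constants of $J_t$ and letting $t\to0$. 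Every superalgebra other than the four claimed generic ones must be reached in this way.

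For the second part, I will use Lemma \ref{invariants} and Lemma \ref{newinvariants}. The automorphism dimensions are $11,8,7,8$ for $(1,4)_{3},(1,4)_{6},(1,4)_{7},(1,4)_{9}$. By item \ref{item:aut}), $(1,4)_3$ can only be a degeneration of $(1,4)_{5},\dots,(1,4)_{9}$. These all have nonzero $l_{e_1}$, while $\mathrm{Ann}((1,4)_3)$ has type $(1,0)$ and the candidates have annihilators of types $(0,1)$ and $(0,2)$. The odd part of the annihilator may only grow, but here it would have to drop to $0$, so Lemma \ref{newinvariants} excludes all of them.

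For $(1,4)_6$ (dimension $8$), the only candidate is $(1,4)_7$. It is excluded because $(1,4)_7$ has $\Gamma=0$, and a degeneration preserves the vanishing of $\Gamma$: apply $a(J)\to a(J')$, or use the polynomial identity $y_1y_2=0$ from item \ref{item:PI}).

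For $(1,4)_9$ (dimension $8$), the candidate $(1,4)_7$ is excluded in the same way. It also cannot arise from $(1,4)_6$, since the two have equal automorphism dimension.

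For $(1,4)_7$ (dimension $7$), nothing in the table has a smaller automorphism group, so it is rigid automatically.

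I expect the main obstacle to be finding the degenerations into the lower superalgebras, particularly $(1,4)_5$ and $(1,4)_8$. I have to be sure each lies in the closure of a generic one, and the scalings must respect the skew-symmetry of $\Gamma$ together with the Jordan normal form of $l_{e_1}$. If a direct scaling fails, I would first try the perturbation $F_i^t=f_i+t^{-1}f_j$ combined with compensating rescaling of $e_1$.
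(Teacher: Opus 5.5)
Your overall plan is the paper's: cover $\mathcal{NJS}^{(1,4)}$ by degenerations out of $(1,4)_3,(1,4)_6,(1,4)_7,(1,4)_9$, and separate these four by invariants. The separation half is correct and complete. Ruling out degenerations onto $(1,4)_3$ by the odd part of the annihilator (Lemma \ref{newinvariants}) is a valid substitute for the paper's $a(J)\nrightarrow a(J')$. Excluding $7\to6$ and $7\to9$ via $\Gamma=0$ is exactly the paper's argument. Your openness argument for the rigidity claim is sound.

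The gap is in the covering half, which is precisely what must be verified, and several of your explicit candidates fail.
\begin{itemize}
\item $(1,4)_6\to(1,4)_3$ is impossible. The rank of the skew form $\Gamma$ (that is, of $a(J)$) cannot rise from $2$ to $4$. Your own annihilator argument (odd annihilator of dimension $1$ versus $0$) also forbids it, so the proposal is internally inconsistent. Rescaling $F_1^t=t^{-1}f_1$ only yields $6\to2$.
\item For $6\to5$, your choice $F_4^t=t^{-1}f_2+f_4$ gives $E_1F_4^t=t^{-1}F_1^t$, which diverges, and the surviving odd product is still $F_3F_4$. You must perturb $f_2$ instead, e.g.\ $\{e_1,f_1,f_2-f_4,f_3,tf_4\}$.
\item For $7\to8$, the change $F_3^t=tf_3$ only kills $e_1f_3=f_2$ and lands in $(1,4)_4$. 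To move the Jordan type of $l_{e_1}$ from $(3,1)$ to $(2,2)$, use $\{e_1,f_1,f_2+f_4,tf_2,tf_3\}$.
\end{itemize}
Add $3\to2$ via $\{e_1,f_1,f_2,tf_3,f_4\}$, $9\to8$ via $\{e_1,f_1,f_2,tf_3,tf_4\}$ and $8\to4$ via $\{e_1,f_1,f_2,t^{-1}f_3,f_4\}$. Then every one of $(1,4)_1,(1,4)_2,(1,4)_4,(1,4)_5,(1,4)_8$ is reached and your argument goes through.

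Incidentally, your candidate $(1,4)_9\to(1,4)_5$ is genuinely true, realized by $\{te_1,tf_1,f_2,tf_4,f_3\}$. The paper's table asserts $9\nrightarrow5$ on the grounds of annihilator dimensions, but both annihilators have type $(0,2)$, so that entry is wrong. This does not affect the components, since $(1,4)_5\in\overline{O((1,4)_6)}$ anyway.
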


\begin{proof}
By analyzing Table \ref{table:deg14}, we observe that every $J\in\mathcal{NJS}^{(1,4)}$ belongs to some $\mathcal{C}_{i}$, with $i=1,\dots,4$. Moreover, by Table \ref{table:ndegcaso14} and the dimensions of the corresponding automorphism groups, there are no degenerations between the superalgebras $(1,4)_{3}$, $(1,4)_{6}$, $(1,4)_{7}$ and $(1,4)_{9}$. Hence, $\mathcal{C}_{i}\not\subseteq\mathcal{C}_{j}$, whenever $i\neq j$. Since the closure of a single orbit is irreducible, we conclude that $\mathcal{C}_{1}$, $\mathcal{C}_{2}$,$\mathcal{C}_{3}$ and $\mathcal{C}_{4}$ are precisely the irreducible components of $\mathcal{NJS}^{(1,4)}$. Therefore, the superalgebras $(1,4)_{i}$, with $i\in\{3,6,7,9\}$, are rigid in the variety.
\end{proof}

{\footnotesize
\begin{longtable}{|l|l|l|l|}
 \caption{Degenerations between nilpotent Jordan superalgebras of type $(1,4)$.}  \label{table:deg14}\\ \hline
 $ J \rightarrow J^{\prime} $ & {\text{Change of Basis}}& $ J \rightarrow J^{\prime} $&{\text{Change of Basis}}\\ \hline
 $ 3 \rightarrow 2 $ & $\{e_{1},\,f_1,\,f_2,\,tf_3,\,f_4 \} $  &$8 \rightarrow 4 $ & $\{e_{1},\,f_1,\,f_2,\,t^{-1}f_3,\,f_4\}$ \\ \hline
 $ 5 \rightarrow 2 $ & $\{e_{1},\,tf_2,\,t^{-1}f_3,\,f_1,\,f_4 \}$ &$ 9 \rightarrow 2 $ & $ \{te_{1},\,-f_4,\,tf_2,\,f_3,\,tf_1\,\}$ \\ \hline
 $ 5 \rightarrow 4 $ & $\{e_{1},\,f_1,\,f_2,\,tf_3,\,f_4\}$ &$9 \rightarrow 4$ & $\{e_{1},\,f_1,\,f_2,\,tf_3,\,tf_4\} $ \\ \hline
$ 6 \rightarrow 5 $ & $\{e_{1},\,f_1,\,f_2-f_4,\,f_3,\,tf_4\}$ &$9 \rightarrow8$ & $\{e_{1},\,f_1,\,f_2,\,tf_3,\,tf_4\} $ \\ \hline
$ 7 \rightarrow 8 $ & $\{e_{1},\,f_1,\,f_2+f_4,\,tf_2,\,tf_3\}$ \\ \cline{1-2}
\end{longtable}
}

{\footnotesize
\begin{longtable}{|l|l|}
\caption{Non-degenerations between nilpotent Jordan superalgebras of type $(1,4)$.}\label{table:ndegcaso14}\\\hline
 $ J\nrightarrow J^{\prime} $  & \text{Reason} \\ \hline
 $ 4 \nrightarrow 2;\,\,\, 5,6,9 \nrightarrow 3;\,\,\, 7 \nrightarrow 2,3,5,6, 9;$  &\multirow{2}{*}{ $ a(J) \nrightarrow a(J^{\prime}) $} \\ 
$8\nrightarrow2,3.$&\\ \hline
$ 6 \nrightarrow 8. $ & $ F(J) \nrightarrow F(J^{\prime}) $ \\ \hline

$ 9 \nrightarrow 5. $ & $ \mathrm{dim}(\mathrm{Ann}(J))_{1} > \mathrm{dim}(\mathrm{Ann}(J^{\prime}))_{1} $ \\ \hline
\end{longtable}
}

Figure \ref{fig:Hasse23} summarizes the information contained in Table \ref{table:deg14} and Table \ref{table:ndegcaso14}, and gives a picture of the facts presented in Theorem \ref{theo:irrecom14}.

\begin{figure}[H]
\centering
\includegraphics[scale=0.2]{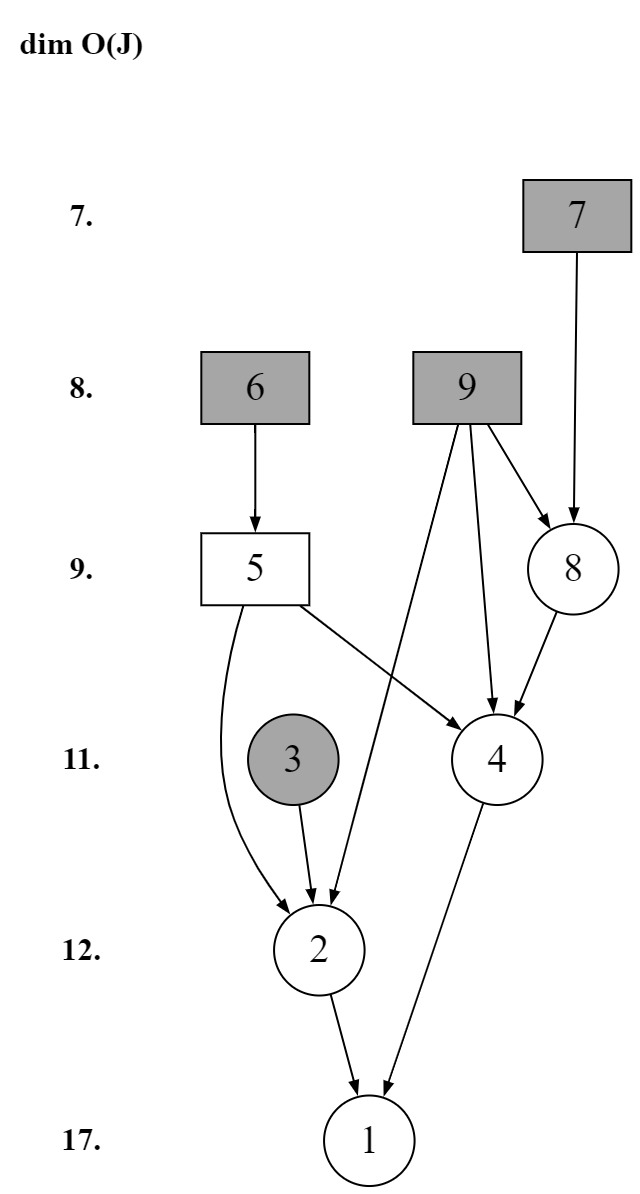}
\caption{Hasse diagram of degenerations for nilpotent Jordan superalgebras of type $(1,4)$.}
\end{figure} 
\subsection{Variety $\mathcal{NJS}^{(3,2)}$}
Using Tables \ref{table:deg32} and \ref{table:ndeg32}, we can argue as in the proof of Theorem \ref{theo:irrecom14} to obtain the following result, as can be seen in Figure \ref{fig:hasse32}.

\begin{theorem}
The variety $\mathcal{NJS}^{(3,2)}$ has six irreducible components given by
\[\mathcal{C}_{1}=\overline{O((3,2)_{18})}, \ \mathcal{C}_{2}=\overline{O((3,2)_{19})}, \ \mathcal{C}_{3}=\overline{O((3,2)_{23})}, \]\[ \mathcal{C}_{4}=\overline{O((3,2)_{25})}, \mathcal{C}_{5}=\overline{O((3,2)_{27})}, \ \mathcal{C}_{6}=\overline{O((3,2)_{29})}.\]
In particular, the rigid superalgebras are exactly $(3,2)_{i}$, with $i\in\{18,19,23,25,27,29\}$.    
\end{theorem}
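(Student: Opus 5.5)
We follow the scheme of the proof of Theorem \ref{theo:irrecom14}. The argument has three parts: show the six proposed orbit closures cover $\mathcal{NJS}^{(3,2)}$, show no two of them are comparable, and conclude via irreducibility of orbit closures.

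\textbf{Covering.} Since $\mathcal{NJS}^{(3,2)}$ is the finite union of the $29$ orbits of Table \ref{table:caso32}, it suffices to show that every $(3,2)_i$ is reached by a chain of degenerations from one of $(3,2)_{18}, (3,2)_{19}, (3,2)_{23}, (3,2)_{25}, (3,2)_{27}, (3,2)_{29}$. Each needed degeneration $J\rightarrow J^{\prime}$ will be given by an explicit parametrized basis $\{E_1^t,E_2^t,E_3^t,F_1^t,F_2^t\}$, and these are collected in Table \ref{table:deg32}.

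\textbf{Choosing the degenerations.} The natural guides are the following.
\begin{enumerate}[i)]
\item Rescalings with powers of $t$ kill a single product, for instance the odd product $f_1f_2$ or the action $e_if_2=f_1$. This reduces $23\to 22$, $29\to 28$ and similar cases.
\item Degenerations of the even part $J_{0}$, namely $J_{3,4}\to J_{3,2}, J_{3,3}$ and $J_{3,3}\to J_{3,2}$, combined with compatible changes in $J_1$. Typical targets are $25\to 7,8,13,14$ and $18\to 6,16,17$.
\item Lowering the orbit dimension step by step, using the $\mathrm{Aut}$ column, until every superalgebra with $\dim\mathrm{Aut}\ge 6$ is hit.
\end{enumerate}
Concretely, each $g(t)$ is chosen so that the transformed structure constants are polynomial in $t$ and specialize at $t=0$ to those of the target.

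\textbf{Incomparability of the six candidates.} We must show that no candidate degenerates to another. The automorphism dimensions are $5,4,4,5,4,5$ for $18,19,23,25,27,29$. By Lemma \ref{invariants} i), a degeneration $J\rightarrow J^{\prime}$ requires $\dim\mathrm{Aut}(J)<\dim\mathrm{Aut}(J^{\prime})$, so the only pairs to exclude are those going from $\{19,23,27\}$ to $\{18,25,29\}$. We use the invariants as follows.
\begin{enumerate}[i)]
\item Lemma \ref{invariants} ii) compares even parts. $J_{3,3}\nrightarrow J_{3,4}$ since $\dim\mathrm{Aut}(J_{3,3})>\dim\mathrm{Aut}(J_{3,4})$ (equivalently, $J_{3,4}$ has nilindex $4$ while $J_{3,3}$ has nilindex $3$). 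This settles $19,23\nrightarrow 25,29$.
\item Lemma \ref{invariants} v) compares nilindices. $(3,2)_{27}$ has nilindex $4$ and $(3,2)_{18}$ has nilindex $5$, so the nilindex test does not exclude $27\to 18$ directly. This pair needs another invariant, considered in the next paragraph.
\item For $19\nrightarrow 18$ and $23\nrightarrow 18$, both have nilindex $4<5$, so the nilindex criterion of Lemma \ref{invariants} v) applies in the right direction and settles them.
\item For $27\nrightarrow 25,29$, both $(3,2)_{27}$ and the targets have even part $J_{3,4}$. We compare $a(J)$ and $F(J)$ (Lemma \ref{invariants} iii)--iv)), since $a((3,2)_{27})=0$ while the targets have nonzero odd product. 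We also compare the graded annihilator and associative center dimensions of Lemma \ref{newinvariants}: $\mathrm{Ann}$ has type $(1,1)$ for $27$ versus $(1,0)$ for $25$ and $29$, which goes the right way for item i) of that lemma.
\end{enumerate}

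\textbf{Main obstacle.} We expect the hardest part to be twofold. First, finding explicit degenerations into the low-symmetry targets such as $(3,2)_{22}$, $(3,2)_{21}$ and $(3,2)_{11}$, where a naive rescaling fails. For these we would first try a base change that mixes $e_3$ into $e_1,e_2$ with factors of $t$. Second, verifying that the case $27\to 18$ (and any pair where the graded invariants coincide) is excluded. For that pair we would first check $a(J)\nrightarrow a(J^{\prime})$: here $a((3,2)_{27})$ is trivial while $a((3,2)_{18})$ is not, and since the trivial structure degenerates to nothing else, this gives $27\nrightarrow 18$.
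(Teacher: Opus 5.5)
Your overall scheme is the paper's: cover $\mathcal{NJS}^{(3,2)}$ by chains from Table \ref{table:deg32}, show the six candidates are pairwise incomparable, and conclude by irreducibility of orbit closures. The incomparability step, however, has a genuine hole.

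You misread $\dim\mathrm{Aut}((3,2)_{29})$: Table \ref{table:caso32} gives $4$, not $5$. So the pairs not already excluded by Lemma \ref{invariants} i) are those from $\{19,23,27,29\}$ to $\{18,25\}$. The pairs $19,23,27\to 29$ that you work on are automatic, since the orbit dimensions are equal. The pairs $29\to 18$ and $29\to 25$ are never treated. The first is immediate from the nilindex ($4<5$). If $29\to 25$ held, however, you would have $\mathcal{C}_4\subseteq\mathcal{C}_6$. Your argument therefore does not establish that $\mathcal{C}_4$ is a component, nor that $(3,2)_{25}$ is rigid.

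Moreover, $29\nrightarrow 25$ is exactly the pair that none of the invariants you invoke can separate:
\begin{enumerate}[i)]
\item both have even part $J_{3,4}$;
\item $a((3,2)_{29})\cong a((3,2)_{25})$, since each has one nonzero odd product into an even part with zero multiplication;
\item $F((3,2)_{29})=(3,2)_{28}\to(3,2)_{24}=F((3,2)_{25})$;
\item both have annihilator of type $(1,0)$ and nilindex $4$;
\item the graded dimensions of the powers are compatible with a degeneration ($J^2$ of type $(2,1)$ versus $(2,0)$, $J^3$ of type $(1,0)$ in both).
\end{enumerate}
The paper closes this case with the graded identity $p(x_1,y_1,y_2)=(y_1y_2)x_1$. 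It holds in $(3,2)_{29}$ because $J_1J_1=\mathrm{span}(e_3)\subseteq\mathrm{Ann}$, but fails in $(3,2)_{25}$ since $(f_1f_2)e_1=e_2e_1=e_3$.

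Alternatively, the graded associative center of Lemma \ref{newinvariants} ii) also works. In $(3,2)_{29}$ one checks $f_1\in\mathrm{Z}$, so $\dim(\mathrm{Z}((3,2)_{29}))_1=1$. In $(3,2)_{25}$, $(\mu f_1+\nu f_2,f_2,e_1)=\mu e_3$ and $(\mu f_1+\nu f_2,f_1,e_1)=-\nu e_3$, which force $(\mathrm{Z}((3,2)_{25}))_1=0$.

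There are also some smaller slips.
\begin{enumerate}[i)]
\item In your item ii), the nilindex test does exclude $27\to 18$, because a degeneration cannot raise the nilindex and $4<5$. This is exactly your own reasoning in item iii).
\item Among your covering heuristics, $25\to 13$ and $25\to 14$ are false. $(3,2)_{14}$ and $(3,2)_{25}$ both have $\dim\mathrm{Aut}=5$, and $F((3,2)_{25})=(3,2)_{24}\nrightarrow(3,2)_{12}=F((3,2)_{13})$. In Table \ref{table:deg32}, $13$ is reached via $11,14,21\to 13$ and $14$ via $29\to 14$.
\item The covering must also reach the non-candidates with $\dim\mathrm{Aut}=5$ ($9,11,14,21,22,28$), not only those with $\dim\mathrm{Aut}\ge 6$.
\end{enumerate}
Since your covering ultimately cites Table \ref{table:deg32}, as the paper does, these slips do not break it.
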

{\footnotesize
\begin{longtable}{|l|l|l|l|}
\caption{Degenerations between nilpotent Jordan superalgebras of type $(3,2)$.}\label{table:deg32}\\ \hline
$ J \rightarrow J^{\prime} $ & \text{Change of Basis}&$ J \rightarrow J^{\prime} $&\text{Change of Basis}\\ \hline
$ 4 \rightarrow 2$ & $\{te_1,\,e_2,\,e_3,\,f_1,\,f_2\} $ &$ 20 \rightarrow 12 $ & $\{e_1+e_2,\,2e_3,\,te_2,\,f_1,\,f_2\} $ \\ \hline
$ 4 \rightarrow 3 $ & $\{e_1,\,e_2,\,e_3,\,tf_1,\,tf_2\} $ &$ 20 \rightarrow 15 $ & $\{e_1,\,e_2,\,e_3,\,f_1,\,tf_2\}$ \\ \hline

$6 \rightarrow 8$ & $\{e_1+e_3,\,e_2,\,te_1,\,f_1,\,tf_2\}$ &$ 21 \rightarrow 13 $ & $\{e_1+{\scriptstyle\frac{1}{2}}e_2,\,e_3, \,te_2,\,f_1,\,f_2\}$ \\ \hline
$ 7\rightarrow 2 $ & $\{te_1,\,e_2,\,e_3,\,f_1,\,f_2 \} $ &$ 21 \rightarrow 17 $ & $\{e_1,\,te_2,\,te_3,\,f_1,\,tf_2 \}$ \\ \hline
$ 7 \rightarrow 5 $ & $\{e_1,\,e_2,\,e_3,\,tf_1,\,f_2\}$ &$ 21 \rightarrow 20 $ & $\{e_1,\,e_2,\,e_3,\, tf_1,\,tf_2\}$ \\ \hline

$ 8 \rightarrow 7 $ & $\{e_1,\,e_2,\,e_3-t^{-1}e_2,\,tf_1,\,f_2 \}$ &$ 22 \rightarrow 10 $ &$\{e_1-e_2,\,-2e_3,\,-2te_2,\,-2tf_1,\,f_2\}$ \\ \hline
$ 9 \rightarrow 10 $ &$\{e_1,\,e_2,\,t^{-1}e_2+e_3,\,f_1,\,tf_2\}$ &$ 22 \rightarrow 20 $ & $\{e_1,\,te_2,\,te_3,\,f_1,\,f_2\}$ \\ \hline
$ 10 \rightarrow 12 $ & $\{e_1+e_3,\,e_2,\,te_3,\,f_1,\,f_2\} $ &$ 23 \rightarrow 11 $ & $\{t(e_1-e_2),\,-2t^{2}e_3,\,-2t^{2}e_2, \,2t^{2}f_1,\,-f_2 \}$ \\ \hline
$ 11 \rightarrow 10 $ & $\{e_1,\,e_2,\,e_3,\,tf_1,\,tf_2\}$ &$ 23 \rightarrow 21 $ & $\{te_2,\,t^{2}e_1,\,t^{3}e_3,\,t^{2}f_1,\,tf_2 \}$ \\ \hline
$ 11 \rightarrow 13$ & $\{te_1+e_3,\,t^{2}e_2,\,te_3,\,tf_1,\,tf_2 \}$ &$ 23 \rightarrow 22 $ & $\{e_1,\,e_2, \,e_3,\,tf_1,\,tf_2\}$ \\ \hline
$ 12 \rightarrow 3 $ & $\{te_1,\,e_2,\,e_3,\,tf_1,\,f_2\}$ &$ 24 \rightarrow 15 $ & $\{te_1,\,e_2,\,te_3,\,f_1,\,f_2\}$ \\ \hline
$12 \rightarrow 5 $ & $\{e_1,\,e_2,\,e_3,\,f_1,\,tf_2\}$ &$ 25 \rightarrow 16 $ & $\{t(e_1+e_2),\,t(e_2+2e_3),\,t^{2}e_3,\,tf_1,\,f_2\}$ \\ \hline
$ 13 \rightarrow 4 $ & $\{te_1,\,te_2,\,e_3,\,tf_1,\,f_2\}$ &$ 25 \rightarrow 26$ & $\{te_1-{\scriptstyle\frac{1}{2}}te_2,\,t^{2}(e_2-e_3),\,t^{3}e_3,\,f_1,\,t^{3}f_2\}$ \\ \hline
$ 13 \rightarrow 7 $ & $\{te_1,\,t^{2}e_2,\,e_3,\,tf_1,\,tf_2\} $ &$ 26 \rightarrow 8 $ & $\{te_1,\,t^{2}e_2,\,e_3,\,f_1,\,f_2\}$ \\ \hline 
$13\rightarrow12$&$\{e_{1},\, e_{2},\, e_{3},\, f_{1},\, tf_{2}\}$&$ 26 \rightarrow 17 $ & $\{te_1,\,t^{2}e_2,\,t^{3}e_3,\,tf_1,\,t^{2}f_2 \}$ \\ \hline
$ 14 \rightarrow 8 $ & $\{te_1,\,t^{2}e_2,\,e_3,\,f_1,\,f_2 \}$ &$ 26 \rightarrow 24 $ & $\{e_1,\,e_2,\,e_3,\,tf_1,\,f_2\}$ \\ \hline
$ 14 \rightarrow 13 $ & $\{e_1,\,e_2, \,e_3-t^{-1}e_2,\,f_1, \,tf_2\}$ &$ 27 \rightarrow 9 $ & $\{te_1,\,t^{2}e_2,\,e_3,\,t^{2}f_1,\,f_2\}$ \\ \hline
$ 15 \rightarrow 5 $ & $\{e_1+e_2,\,2e_3,\,2te_2,\,f_1,\,f_2\}$ &$ 27 \rightarrow 22 $ & $\{t(e_1+e_2),\,t(e_2+2e_3),\,t^{2}e_3,\,tf_1,\,f_2\}$ \\ \hline
$ 16 \rightarrow 8 $ & $\{e_1+e_2,\,2e_3,\,-2te_2,\,-2tf_1,\,f_2\}$ &$ 27 \rightarrow 28 $ & $\{t(e_1+e_2),\,t^{2}(e_2+2e_3),\,t^{3}e_3,\,tf_1,\,f_2 \}$ \\ \hline
$ 16 \rightarrow 17 $ & $\{te_1-e_3,\,e_2,\,te_3,\,f_1,\,t^{2}f_2\}$ &$ 28 \rightarrow 20 $ & $\{te_1,\,e_2,\,te_3,\,tf_1,\,f_2\}$ \\ \hline

$ 17 \rightarrow 7 $ & $\{e_1+e_2,\,2e_3,\,2te_2,\,2f_1,\,f_2\}$ & $ 28 \rightarrow 24 $ & $\{e_1,\,e_2,\,e_3,\,f_1,\,tf_2 \}$ \\ \hline

$ 17\rightarrow 15 $ & $\{e_1,\,e_2,\,e_3,\,tf_1,\,f_2\}$ &$ 29 \rightarrow 14$ & $\{te_1,\,t^{2}e_2,\,te_3,\,tf_1,\,f_2 \}$ \\ \hline
$ 18 \rightarrow 6 $ & $\{e_1+e_2,\,2e_3,\,2te_2,\,f_1,\,f_2 \}$ &$ 29 \rightarrow 21 $ & $\{te_1,\,e_2,\,te_3,\,tf_1,\,f_2 \}$ \\ \hline
$ 18 \rightarrow 16 $ & $\{te_1,\,e_2,\,te_3,\,f_1,\,tf_2\}$ &$ 29\rightarrow 26 $ & $\{e_1,\,e_2,\,e_3,\,t^{-1}f_1,\,tf_2\} $ \\ \hline
$ 19 \rightarrow 9 $ & $\{e_1+e_2,\,2e_3,\,2te_2,\,2f_1,\,f_2\}$&$ 29 \rightarrow 28 $ & $\{e_1,\,e_2,\,e_3, \,tf_1,\,tf_2\}$ \\ \hline
$ 19 \rightarrow 22 $ & $\{e_2+e_3,\,te_1+e_3,\,te_3,\,f_1,\,f_2\}$\\ \cline{1-2}
\end{longtable}
}
{\footnotesize
 \begin{longtable}{|l|l|}
 \caption{Non-degenerations between nilpotent Jordan superalgebras of type $(3,2)$.}\label{table:ndeg32}\\ \hline
$ J \nrightarrow J^{\prime} $ & $ \text{Reason} $ \\ \hline
$ 3,5 \nrightarrow 2; \,\,\,\, 9 \nrightarrow 2,4,6,7,8,13;$ & \multirow{6}{*}{$ a(J) \nrightarrow a(J^{\prime}) $} \\
$10\nrightarrow2,4,7,8; \,\,\, \,12,24\nrightarrow2,7;$&\\
$19 \nrightarrow 2,4,6,7,8,11,13,14,16,17,18,21; $ & \\ 
$20\nrightarrow2,4,7,8,17; \,\,\,\, 22\nrightarrow 2,4,6,7,8,13,16,17;$&\\
$ 27 \nrightarrow 2,4,6,7,8,11,13,14,16,17,18,21,25,26; $ & \\ 
$ 28 \nrightarrow 2,4,6,7,8,13,16,17,26. $ & \\ \hline
$4 \nrightarrow 5,7,15;\,\,\, \, 6,10,13 \nrightarrow  15,17,24; $ & \multirow{4}{*}{$ J_{0} \nrightarrow J_{0}^{\prime} $} \\ 
$ 8,12 \nrightarrow15; \,\,\, \,9,11,14 \nrightarrow 15,16,17,20; $&\\
$16,20\nrightarrow 24;\,\,\, \,  18,21,22\nrightarrow24,26;$&\\
$19,23 \nrightarrow 24,25,26,28.$ &  
\\ \hline
$6,16,26 \nrightarrow 3,4,12;\,\,\,\, 7,8,17 \nrightarrow 3; \,\,\,\, 14\nrightarrow10;$ & \multirow{3}{*}{$F(J) \nrightarrow F(J^{\prime}) $}  \\ 
$18,25\nrightarrow 3,4,10,12,13,20; \,\,\,\, 21\nrightarrow10;$&\\
$23 \nrightarrow 9;\,\,\,\, 29 \nrightarrow 9,10,20,22. $ & \\ 
 \hline
 $11 \nrightarrow 6,8;\,\,\,\, 13\nrightarrow8;\,\,\,\, 14,25\nrightarrow 6; $ & \multirow{2}{*}{$\mathrm{dim}(J^{2})_{0} < \mathrm{dim}({J^{\prime}}^{2})_{0} $} \\
 $21\nrightarrow 6,8,16;\,\,\,\, 23 \nrightarrow 6,8,14,16,18.$&\\\hline
$ 15 \nrightarrow 2, 3; \,\,\,\, 24 \nrightarrow 3.$ &$ \mathrm{dim}(\mathrm{Ann}(J))_{1} > \mathrm{dim}(\mathrm{Ann}(J^{\prime}))_{1} $  \\  \hline

$ 29 \nrightarrow 6,18.$ & $ \mathrm{nilindex}(J) < \mathrm{nilindex}(J^{\prime}) $ \\ \hline

$ 29 \nrightarrow 16,25.$ & $ p(x_1,y_1,y_2)=(y_{1}y_{2})x_{1} $ \\ \hline
\multirow{3}{*}{$28 \nrightarrow 10. $ }& $ J \nrightarrow J^{\prime}$ as algebras (see \cite{Degnilcomasso5}, where  \\ &
$28$ and $10$ corresponds to $A_{10}$ and \\
& $A_{15}$, respectively.) \\\hline

\end{longtable} 
}

\begin{figure}[H]
\centering
\includegraphics[scale=0.2]{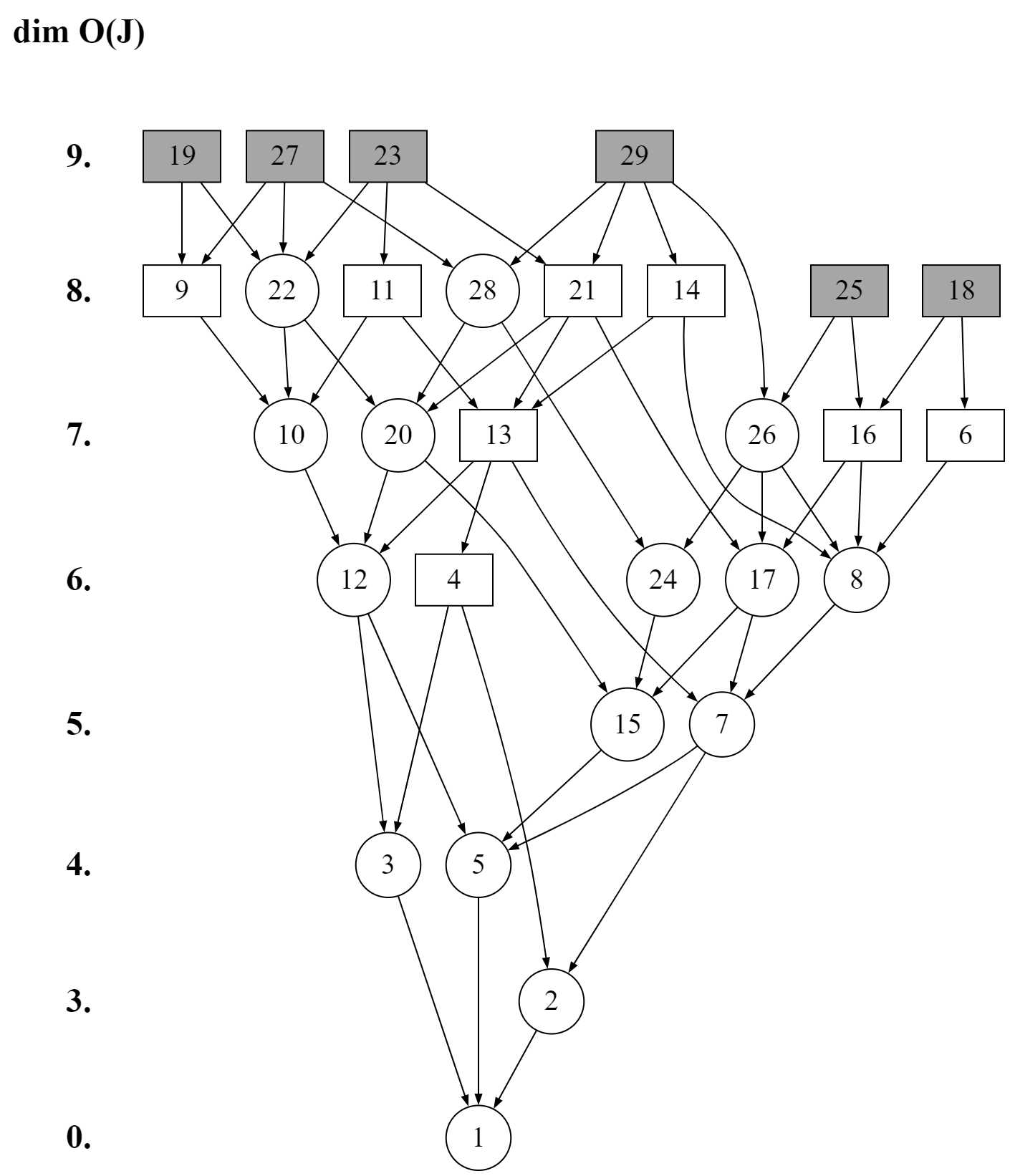}
\caption{Hasse diagram of degenerations for nilpotent Jordan superalgebras of type $(3,2)$.}
\label{fig:hasse32}
\end{figure}
\subsection{Variety $\mathcal{NJS}^{(2,3)}$}

When there are only finitely many orbits, as in the previous cases, the Hasse diagram provides all the necessary information about the variety under consideration. In particular, the rigid superalgebras and the irreducible components can be easily identified. This is no longer the case for $\mathcal{NJS}^{(2,3)}$, since it contains families of infinitely many non-isomorphic superalgebras. In order to determine the irreducible components of this variety, it is also necessary to consider the notion of degeneration involving families of superalgebras, according to \cite{Geo5Jor}. First, to stablish the notation, given a family of superalgebras $(2,3)_{r}^{\alpha}$, where $\alpha\in I\subseteq\mathbb{C}$, we denote by ${\mathcal{N}}_{r}^{\#}=\bigcup\nolimits_{\alpha \in I}O((2,3)_{r}^{\alpha})$ the union of the orbits of superalgebras in the family $(2,3)_{r}^{\alpha}$.

Let $(2,3)_{j}$ and $(2,3)_{l}$ be single superalgebras. We write $(2,3)_{j}\rightarrow (2,3)_{r}^{\alpha}$ if $(2,3)_{r}^{\alpha}\in\overline{O((2,3)_{j})}$ for every $\alpha\in I$. On the other hand, we write $(2,3)_{r}^{\alpha}\rightarrow (2,3)_{l}$ if $(2,3)_{l}\in \overline{{\mathcal{N}}_{r}^{\#}}$. More generally, if $(2,3)_{s}^{\beta}$, $\beta\in K\subseteq \mathbb{C}$, is another family of superalgebras, then $(2,3)_{r}^{\alpha}\rightarrow (2,3)_{s}^{\beta}$ if $(2,3)_{s}^{\beta}\in \overline{{\mathcal{N}}_{r}^{\#}} $ for every $\beta\in K$. To prove such degenerations, it is enough to exhibit, for any fixed $\beta_{0}\in K$ (or $(2,3)_{l}$), a pair $(g(t),\alpha(t))$, with $g(t)$ defined as above and $\alpha(t)\in I$ such that the change of basis of  $(2,3)_{r}^{\alpha(t)}$ induced by $g(t)$ describes a curve $\omega(t)$ in $\mathcal{NJS}^{(2,3)}$ that lies transversely to the orbits of $(2,3)_{r}^{\alpha}$, in the sense that $\omega(t)\subset {\mathcal{N}}_{r}^{\#}$ when $t\neq0$, and intersects the orbit of $(2,3)_{s}^{\beta_{0}}$ (or the orbit of $(2,3)_{l}$) at $t=0$. 

For example, let us show that $(2,3)_{31}^{\lambda}\rightarrow (2,3)_{10}$. Let $\omega(t):\{e_1,\,t^{-1}e_2,\,f_1,\,f_2,\,f_3\}$ be a change of basis of $(2,3)_{31}^{\lambda(t)}$, with $\lambda(t)=t^{-1}$. So $\omega$ defines a curve in $\mathcal{NJS}^{(2,3)}$ such that  $\omega(t)\in O((2,3))_{31}^{\lambda(t)}$ for any $t\neq0$, that is $\omega(t)\subset {\mathcal{N}}_{31}^{\#}$, and intersects $O((2,3)_{10})$ at $t=0$. Now, we verify that $(2,3)_{44}^{\gamma_{0}}\rightarrow(2,3)_{43}^{\gamma}$. Indeed, for any fixed $\gamma_{0}\in\mathbb{C}^{\ast} \ \backslash \ \{1\}$, consider the change of basis $\omega(t)=\{e_{1},\,e_{2},\,tf_{1},\,tf_{2},\,tf_{3}\}$ of $(2,3)_{44}^{\phi(t)}$, where $\phi(t)=\gamma_{0}$. In this case, $\alpha(t)\in O((2,3)_{44}^{\gamma_{0}})\subset {\mathcal{N}}_{44}^{\#}$ when $t\neq0$ and intersect $O((2,3)_{43}^{\gamma_{0}})$ at $t=0$, consequently $(2,3)_{43}^{\gamma_{0}}\in \overline{{\mathcal{N}}_{44}^{\#}}$. 


\begin{theorem}
The variety $\mathcal{NJS}^{(2,3)}$ has five irreducible components, two of them are given by  $\mathcal{C}_{1}=\overline{\bigcup\nolimits_{\lambda} \, O((2,3)_{31}^{\lambda})}$, $\lambda\in \mathbb{C}^{\ast}$, and $\mathcal{C}_{2}=\overline{\bigcup\nolimits_{\phi} \, O((2,3)_{44}^{\phi})}$, $\phi \in \mathbb{C}$. The others are given by 
\[\mathcal{C}_{3}=\overline{O((2,3)_{15})}, \ \mathcal{C}_{4}=\overline{O((2,3)_{33})}, \ \mathcal{C}_{5}=\overline{O((2,3)_{41})}.\]
In particular, the rigid superalgebras are exactly $(2,3)_{i}$, with $i\in\{15,33,41\}$.  
\end{theorem}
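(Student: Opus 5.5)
The plan mirrors the proof of Theorem \ref{theo:irrecom14}, adapted to the two one-parameter families.

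\textbf{Step 1: covering.} I would first show that $\mathcal{NJS}^{(2,3)}=\mathcal{C}_{1}\cup\cdots\cup\mathcal{C}_{5}$. By the algebraic classification, every point of the variety lies in the orbit of some superalgebra in Table \ref{table:caso23}. So it suffices to exhibit, for each $(2,3)_{j}$ and each member of a family, a chain of degenerations ending in $(2,3)_{15}$, $(2,3)_{33}$, $(2,3)_{41}$, or in one of the families ${\mathcal{N}}_{31}^{\#}$, ${\mathcal{N}}_{44}^{\#}$. Single degenerations are given by parametrized changes of basis $\{E_i^t,F_j^t\}$, as in Tables \ref{table:deg14} and \ref{table:deg32}. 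Degenerations out of a family use curves $(g(t),\alpha(t))$ transverse to the orbits, as in the examples $(2,3)_{31}^{\lambda}\rightarrow(2,3)_{10}$ and $(2,3)_{44}^{\phi}\rightarrow(2,3)_{43}^{\gamma}$. The $(2,3)_{43}^{\gamma}$ family, including $\gamma=1$, must also land in $\overline{{\mathcal{N}}_{44}^{\#}}$; the same scaling $F_j=tf_j$ works for every $\phi$. I expect $(2,3)_{30}$ to lie in $\mathcal{C}_{1}$ via a limit $\lambda(t)\to 0$ in $(2,3)_{31}^{\lambda}$.

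\textbf{Step 2: irreducibility and the count.} Each $\mathcal{C}_{3},\mathcal{C}_{4},\mathcal{C}_{5}$ is the closure of a single orbit, so it is irreducible. Each of $\mathcal{C}_{1},\mathcal{C}_{2}$ is the closure of the image of the irreducible variety $G\times\mathbb{C}^{\ast}$ (respectively $G\times\mathbb{C}$) under a morphism, so it is irreducible as well.

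\textbf{Step 3: no inclusions.} Next I would show $\mathcal{C}_{i}\not\subseteq\mathcal{C}_{j}$ for $i\neq j$. A dimension count helps here. Each family member has $\dim\mathrm{Aut}=4$, so $\dim\mathcal{C}_{1}=\dim\mathcal{C}_{2}=13-4+1=10$, while $\dim\mathcal{C}_{3}=9$, $\dim\mathcal{C}_{4}=10$ and $\dim\mathcal{C}_{5}=10$. It then suffices to show the following:
\begin{itemize}
\item $(2,3)_{15}$ lies in no other component;
\item $(2,3)_{33}$ and $(2,3)_{41}$ lie in no other component;
\item a generic member of each family does not lie in the closure of another component. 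Since the families have the same dimension as $\mathcal{C}_{4}$ and $\mathcal{C}_{5}$, equal dimension plus distinct orbits prevents containment once the generic member is excluded.
\end{itemize}

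\textbf{Invariants for Step 3.} I would use Lemmas \ref{invariants} and \ref{newinvariants} and Remark \ref{nondegalg}. The nilindex separates $(2,3)_{41}$ (nilindex $6$) from the families and $(2,3)_{33}$ (nilindex $5$). The even part separates $(2,3)_{15}$, which is based on $J_{2,1}$, from everything based on $J_{2,2}$, by item \ref{item:J0}); note that $J_{2,2}\rightarrow J_{2,1}$ but not conversely. $\mathrm{Ann}$, $a(J)$ and $F(J)$ separate $(2,3)_{33}$ from the families. For closures of families, these invariants remain valid because they are semicontinuous and constant on ${\mathcal{N}}_r^{\#}$.

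\textbf{Main obstacle.} The hardest part is the covering step, specifically checking that every non-family superalgebra (notably $(2,3)_{30}$, $(2,3)_{7}$ and the $\gamma=1$ associative member) degenerates into some component. This requires finding explicit curves, and where a curve through a family is needed, choosing $\alpha(t)$ carefully.

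\textbf{Rigidity.} Finally, $(2,3)_{i}$ for $i\in\{15,33,41\}$ are rigid, because their orbits are open in components that meet no other component generically. The family members are not rigid, since each orbit has dimension $9<10$.
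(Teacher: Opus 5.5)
Your dimension count is a useful simplification that the paper does not use, and it is correct. $\mathcal{C}_1,\mathcal{C}_2,\mathcal{C}_4,\mathcal{C}_5$ are irreducible of dimension $10$. By Chevalley, each contains a dense open subset lying in $\mathcal{N}_{31}^{\#}$, $\mathcal{N}_{44}^{\#}$, $O((2,3)_{33})$, $O((2,3)_{41})$ respectively, and these four sets are pairwise disjoint. So none of these four components contains another, and the $9$-dimensional $\mathcal{C}_3$ contains none of them.

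The only content left in Step 3 is therefore $(2,3)_{15}\notin\mathcal{C}_j$ for $j\neq 3$, and this is exactly where your argument fails. You appeal to item ii) of Lemma \ref{invariants}, but, as you note yourself, $J_{2,2}\rightarrow J_{2,1}$. Item ii) therefore only forbids $J_{2,1}$-based superalgebras from degenerating to $J_{2,2}$-based ones, which is the irrelevant direction. It does not stop $(2,3)_{33}$, $(2,3)_{41}$ or the families, all based on $J_{2,2}$, from degenerating to $(2,3)_{15}$. Such degenerations from $J_{2,2}$-based to $J_{2,1}$-based superalgebras really occur: $(2,3)_{31}^{\lambda}\rightarrow(2,3)_{10}$, $(2,3)_{43}^{\gamma}\rightarrow(2,3)_{19}$, $(2,3)_{44}^{\phi}\rightarrow(2,3)_{20}$. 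As written, you have not shown that $\mathcal{C}_3$ is a component, that there are five components, or that $(2,3)_{15}$ is rigid.

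You need invariants that point the right way.
\begin{itemize}
\item For $\mathcal{C}_1$ and $\mathcal{C}_5$: $\dim(J^2)_1=1$ for $(2,3)_{31}^{\lambda}$ and $(2,3)_{41}$, against $2$ for $(2,3)_{15}$. Since $\dim(J^2)_1\le 1$ is a closed condition, this passes to the closure of the family.
\item For $\mathcal{C}_4$: $\dim(\mathrm{Ann}(J))_0=1$ for $(2,3)_{33}$, against $0$ for $(2,3)_{15}$.
\item For $\mathcal{C}_2$, the delicate case: a single $(2,3)_{44}^{\phi}$ cannot degenerate to $(2,3)_{15}$ by the automorphism count ($4$ versus $4$). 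However, $\mathcal{N}_{44}^{\#}$ is $10$-dimensional, so this says nothing about its closure. Powers, annihilator, nilindex, $a(J)$ and the even part all fail to separate here.
\end{itemize}
The paper handles $\mathcal{C}_2$ with the graded associative center of Lemma \ref{newinvariants}. One checks that $e_2\in(\mathrm{Z}((2,3)_{44}^{\phi}))_0$ for every $\phi$. On the other hand $(\mathrm{Z}((2,3)_{15}))_0=0$, since $(ae_1+be_2,e_2,f_3)=-af_1$ and $(e_1,ae_1+be_2,f_3)=-bf_1$. Because $\dim(\mathrm{Z}(J))_0\ge 1$ is a closed condition, $(2,3)_{15}\notin\mathcal{C}_2$.

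Separately, your remark that the invariants are constant on $\mathcal{N}_r^{\#}$ is false for $F$ on the second family: $F((2,3)_{44}^{\phi})=(2,3)_{43}^{\phi}$ varies with $\phi$. Restrict that remark to the numerical invariants.
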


\begin{proof}
By analyzing Table \ref{table:degcaso23}, we conclude that every $J\in\mathcal{NJS}^{(2,3)}$ belongs to $\mathcal{C}_{i}$ for some $i=1,\dots,5$. This fact can also be easily seen in Figure \ref{fig:Hasse23}. Now, we need to verify that each $\mathcal{C}_{i}$ corresponds to an irreducible component of $\mathcal{NJS}^{(2,3)}$. 

It is immediate that $\mathcal{C}_{3}$, $\mathcal{C}_{4}$ and $\mathcal{C}_{5}$ are irreducible. Note that $\mathcal{C}_{1}$ is the product of two irreducible varieties, $\mathcal{O}((2,3)_{31}^{\lambda}$ and $\mathbb{C}^{\ast}$, as $\mathcal{C}_{2}$ is the product of $\mathcal{O}((2,3)_{44}^{\phi}$ and $\mathbb{C}$. Therefore $\mathcal{C}_{1}$ and $\mathcal{C}_{2}$ are also irreducible. From Table \ref{table:ndeg23}, we conclude that $\mathcal{C}_{i}\not\subset \mathcal{C}_{j}$, whenever $i\neq j$. Therefore, $\mathcal{C}_{1},\dots,\mathcal{C}_{5}$ are the irreducible components of $\mathcal{NJS}^{(2,3)}$ and the rigid superalgebras are precisely $(2,3)_{15}$, $(2,3)_{33}$ and $(2,3)_{41}$.
\end{proof}
{\footnotesize

\begin{longtable}{|l|l|l|l|}
 \caption{Degenerations between nilpotent Jordan superalgebras of type $(2,3)$.}\label{table:degcaso23}\\
\hline   $J\rightarrow J^{\prime}$   & {Change of Basis}& $J\rightarrow J^{\prime}$   & {Change of Basis}\\ \hline
    ${3}\rightarrow{2}$&$\{e_1,\,e_2,\,f_1,\,f_2,\,tf_3\}$&  ${29}\rightarrow{8}$&$\{te_1,\,e_2,\,tf_1,\,f_2,\,tf_3\}$ \\ \hline
    
    ${5}\rightarrow{6}$&$\{e_1,\,e_2,\,tf_1,\,tf_2,\,\,f_3-t^{-1}f_1\}$& ${29}\rightarrow{22}$ &$\{te_1,\,t^{2}e_2,\,-tf_3,\,f_2,\,f_1\}$ \\ \hline
     ${6}\rightarrow{2}$&$\{-e_2,\,te_1,\,f_3,\,f_2,\,f_1\}$& ${29}\rightarrow{26}$& $\{e_1-t^{-1}e_2,\,e_2,\,f_1,\,f_2,\,tf_3\}$ \\ \hline
${6}\rightarrow{4}$&$\{e_1,\,e_2,\,f_1,\,f_2,\,tf_3\}$& ${30}\rightarrow{9}$ & $\{te_1,\,te_2,\,tf_1,\,f_2,\,tf_3\}$  \\ \hline
${7}\rightarrow{5}$& $\{e_1,\,e_2,\,f_1,\,f_2+f_3,\,tf_3\}$& ${30}\rightarrow{24}$& $\{-t^{-1}e_{1},\,t^{-2}e_{2},\,f_{2},\,-t^{-1}f_{3},\,-t^{-2}f_{1}\}$ \\ \hline
 ${8}\rightarrow{6}$&$\{e_1-t^{-1}e_2,\,e_2,\,f_1,\,f_2,\,tf_3\}$ & ${30}\rightarrow{27}$&$\{e_1,\,e_2,\,f_1,\,f_2,\,tf_3\}$\\ \hline
  ${9}\rightarrow{3}$& $\{te_1,\,-e_2,\,f_2,\,tf_3,\,f_1\}$ & ${30}\rightarrow{29}$&$\{e_1,\,e_2,\,tf_1,\,tf_2,\,t^{-1}f_3\}$ \\ \hline
   ${9}\rightarrow{5}$& $\{e_1,\,e_2,\,f_1,\,f_2,\,tf_3\}$ & ${31}^{\lambda}\rightarrow{10}$
&$\{e_1,\,t^{-1}e_2,\,f_1,\,f_2,\,f_3\},\, \lambda(t)=t^{-1}$\\ \hline
   ${9}\rightarrow{8}$& $\{e_1,\,t^{-1}e_2,\,f_1,\,f_2,\,f_3\}$ &${31}^{\lambda}\rightarrow{28}$&$\{e_1,\,e_2,\,tf_1,\,tf_2,\,f_3\}, \, \lambda(t)=t^{-1}$\\ \hline
   ${10}\rightarrow{7}$& $\{e_1,\,te_2,\,f_1,\,f_2,\,tf_3\}$&${31}^{\lambda}\rightarrow{30}$&$\{e_1,\,e_2,\,t^{-1}f_1,\,t^{-1}f_2+f_3,\,tf_3\},\, \lambda(t)=t$ \\ \hline
   ${10}\rightarrow{9}$&$\{te_1,\,te_2,\,tf_1,\,f_2+f_3,\,tf_3\}$ & ${32}\rightarrow{16}$&$\{te_1,\,e_2,\,t^{2}f_1,\,tf_2,f_3\}$  \\ \hline
   ${11}\rightarrow{4}$& $\{e_1,\,e_2,\,f_1,\,f_2,\,tf_3\}$ & ${32}\rightarrow{25}$&$\{e_1,\,e_2,\,f_1,\,f_2,\,tf_3\}$\\ \hline
   ${12}\rightarrow{8}$&$\{te_2,\,t^{2}e_1,\,tf_1,f_1+f_3,\,-tf_2\}$  & ${33}\rightarrow{18}$ & $\{te_1,\,e_2,\,tf_1,\,f_2,\,t^{-1}f_3\}$ \\ \hline
 ${12}\rightarrow{11}$&$\{e_1,\,e_2,\,tf_1,\,tf_2,\,tf_3\}$&${33}\rightarrow{28}$& $\{te_1,\,t^{2}e_2,\,tf_1,\,f_2,\,tf_3\}$ \\ \hline
 ${13}\rightarrow{4}$& $\{e_1,\,te_2,\,f_1,\,f_2,\,f_3\}$&${33}\rightarrow{34}$&$\{e_1,\,e_2,\,tf_1,\,tf_2+t^{-1}f_1,\,tf_3+t^{-1}f_2\}$ \\ \hline
 ${14}\rightarrow{5}$& $\{e_1,\,t^{2}e_2,\,tf_1,\,tf_2,\,f_3\}$ &${34}\rightarrow{17}$&$\{te_1,\,te_2,\,t^{2}f_1,\,tf_2,\,f_3\}$ \\ \hline
  ${14}\rightarrow{8}$&$\{-e_2,\,te_1,\,-f_3,\,f_2,\,f_1\}$& ${34}\rightarrow{27}$&$\{e_1,\,e_2,\,f_2,\,f_3,\,t^{-1}f_1\}$ \\ \hline
  ${14}\rightarrow{13}$&$\{e_1,\,e_2,\,tf_1,\,tf_2,\,tf_3\}$ &
   ${34}\rightarrow{32}$&$\{e_1,\,e_2,\,tf_1,\,tf_2,\,tf_3\}$ \\ \hline
  ${15}\rightarrow{19}$& $\{e_1+e_2,\,te_2,\,f_1,\,-t^{-1}f_1+f_2,\,-t^{-1}f_2+f_3\}$  &${35}\rightarrow{25}$&$\{e_1+t^{-1}e_2,\,e_2,f_1,\,tf_2,\,f_3\}$\\ \hline
  ${16}\rightarrow{4}$& $\{e_1,\,e_2,\,f_1,\,f_2,\,tf_3\}$&${36}\rightarrow{8}$&$\{e_2,\,te_1,\,f_1,\,f_2,\,f_3\}$ \\ \hline
      ${17}\rightarrow{5}$& $\{e_1,\,t^{2}e_2,\,tf_2,\,tf_3,\,f_1\}$&${36}\rightarrow{26}$& $\{e_1+t^{-1}e_2,\,e_2,\,f_1,\,tf_2,\,t^{-1}f_3\}$\\ \hline
     ${17}\rightarrow{16}$&$\{e_1,\,e_2,\,tf_1,tf_2,\,tf_3\}$& ${36}\rightarrow{35}$&$\{e_1,\,e_2,\,f_1,\,f_2,\,tf_3\}$ \\ \hline
      ${18}\rightarrow{7}$ & $\{te_1,\,te_2,\,tf_1,\,f_2,\,f_3\}$ & ${37}\rightarrow{22}$&$\{e_1,\,e_2,\,-t^{-1}f_3,\,tf_2,\,f_1\}$ \\ \hline 
 ${18}\rightarrow{17}$&$\{e_1,\,te_2,\,tf_1,\,f_1+tf_2,\,(1+t^{2})f_2+tf_3\}$ &${37}\rightarrow{26}$ & $\{e_1-t^{-1}e_2,\,e_2,\,-f_1,\,tf_2,\,f_3\}$ \\ \hline  

 ${19}\rightarrow{11}$&$\{e_1,\,t^{-1}e_2,\,tf_1,\,tf_2,\,t^{2}f_3\}$&${37}\rightarrow{35}$&$\{e_1,\,e_2,\,f_1,\,f_2,\,tf_3\}$ \\ \hline
 ${19}\rightarrow{13}$& $\{te_2,\,te_1,\,tf_1,\,f_3,\,tf_2\}$& ${38}\rightarrow{29}$& $\{te_1,\,t^{2}e_2,\,tf_1,\,f_2,\,tf_3\}$ \\ \hline
   ${19}\rightarrow{16}$&$\{e_1,\,te_2,\,f_1,\,f_2,\,f_3\}$&${38}\rightarrow{36}$& $\{(2t)^{-1}(e_1-\tfrac{1}{2}e_2),\,(2t)^{-2}e_2,\,(2t)^{-2}f_1,\,f_2,\,(2t)^{-1}f_3\}$ \\ \hline
    ${20}\rightarrow{12}$&$\{t^{2}e_1,\,t^{3}e_2,\,t^{4}f_1,\,t^{2}f_2,\,tf_3\}$ & ${38}\rightarrow{37}$ & $\{t^{-1}e_1,\,t^{-2}e_2,\,t^{-2}f_1,\,f_2,\,t^{-1}f_3\}$ \\ \hline
     ${20}\rightarrow{14}$& $\{te_1,\,te_2,\,tf_2,\,f_3,\,tf_1\}$ & ${39}\rightarrow{11}$&$\{te_1,\,te_2,\,tf_1,\,f_3,\,f_2\}$ \\ \hline
    ${20}\rightarrow{17}$&$\{e_1,\,t^{2}e_2,\,tf_1,\,tf_2,\,tf_3\}$ &${39}\rightarrow{35}$&$\{e_1,\,e_2,\,f_1,\,f_2,\,tf_3\}$ \\ \hline
   
     ${20}\rightarrow{19}$&$\{e_1,\,e_2,\,tf_1,\,tf_2,\,tf_3\}$& ${40}\rightarrow{12}$&$\{-t^{2}e_1,\,-te_{2},\,-t^{2}f_1,\,f_3,\,tf_2\}$\\  \hline
 ${22}\rightarrow{23}$&$\{e_1-t^{-1}e_2,\,e_2,\,tf_1,\,f_2,\,f_3\}$ &${40}\rightarrow{36}$& $\{e_1,\,e_2,\,t^{-1}f_1,\,t^{-1}f_2,\,tf_3\}$ \\ \hline
 ${23}\rightarrow{2}$& $\{e_2,\,te_1,\,f_1,\,f_2,\,f_3\}$&${40}\rightarrow{39}$&$\{e_1,\,e_2,\,tf_1,\,tf_2,\,tf_3\}$ \\ \hline
${23}\rightarrow{21}$& $\{e_1,\,e_2,\,tf_1,\,f_2,\,f_3\}$ & ${41}\rightarrow{38}$&$\{e_1, \,e_2,\,t^{-1}f_{1},\,t^{-1}(f_{2}+f_{3}),\,tf_{3}\}$\\ \hline
${24}\rightarrow{3}$&$\{te_{1},\,e_{2},\,f_{1},\,tf_{2},\,f_{3}\}$ &${41}\rightarrow{40}$& $\{t^{1/2}e_1-e_2,\,te_2,\,t^{3/2}f_1,\,t^{1/2}f_2+f_3,\,tf_3 \}$ \\ \hline 
${24}\rightarrow{22}$ & $\{e_{1},\,e_{2},\,f_{1},\,f_{2},\,tf_{3}\}$& ${42}\rightarrow{13}$&$\{e_2,\,te_1,\,f_1,\,f_2,\,tf_3\}$\\ \hline
${25}\rightarrow{4}$&$\{te_1,\,e_2,\,tf_1,\,f_2,\,f_3\}$& ${42}\rightarrow{35}$&$\{e_1,\,e_2,\,f_1,\,f_2,\,t^{-1}f_3\}$ \\ \hline 
${25}\rightarrow{21}$&$\{e_1,\,e_2,\,f_1,\,tf_2,\,f_3\}$ &${43}^{\gamma}\rightarrow{19}$&$\{e_{1},\,t^{-1}e_{2},\,t^{-1}f_{1},\,f_{3},\,f_{2}\},\, \gamma(t)=t^{-1}$ \\\hline
 ${26}\rightarrow{6}$&$\{te_1,\,e_2,\,tf_1,\,f_2,\,f_3\}$&
${43}^{\gamma}\rightarrow{32}$ & $\{e_{1},\,e_{2},\,t^{-1}f_{1},\,f_{3},\,f_{2}\},\, \gamma(t)=t^{-1}$ \\\hline
${26}\rightarrow{23}$& $\{e_1,\,e_2,\,tf_2,\,t^{-1}f_3,\,f_1\}$ &${43}^{\gamma}\rightarrow{39}$& $\{e_{1},\,e_{2},\,f_{1},\,f_{2},\,t^{-1}f_{3}\},\, \gamma(t)=t$\\\hline

 ${26}\rightarrow{25}$& $\{e_1,\,e_2,\,f_1,\,f_2,\,tf_3\}$&${43}^{\gamma}\rightarrow{42}$ & $\{e_{1},\,e_{2},\,f_{1},\,f_{2},\,f_{3}\},\,\gamma(t)=t$\\\hline
 
    ${27}\rightarrow{5}$&$\{te_1,\,te_2,\,tf_1,\,f_2,\,f_3\}$&${44}^{\phi}\rightarrow {20}$& $\{-te_{1},\,te_{2},\,tf_{1},\,-tf_{3},\,f_{2}\},\, \phi(t)=t^{-1}$ \\\hline  ${27}\rightarrow{26}$&$\{e_1,\,e_2,\,tf_1,\,tf_2,\,t^{-1}f_1+f_3\}$ &${44}^{\phi}\rightarrow {34}$& $\{-e_{1},\,e_{2},\,t^{-1}f_{1},\,f_{3},\,f_{2}\},\, \phi(t)=t^{-1}$\\\hline

     ${28}\rightarrow{7}$&$\{te_1,\,te_2,\,tf_1,\,f_2,\,f_3\}$&${44}^{\phi}\rightarrow {40}$ & $\{te_{1},\,t^{2}e_{2},\,t^{3}f_{1},\,tf_{2},\,tf_{3}\},\, \phi(t)=t$ \\\hline
      ${28}\rightarrow{27}$& $\{e_1,\,e_2,\,f_1,\,f_2+f_3,\,tf_3\}$&${44}^{\phi}\rightarrow {43}^{\gamma}$ & $\{e_{1},\,e_{2},\,tf_{1},\,tf_{2},\,tf_{3}\}$, $\phi(t)=\gamma_{0}$, $\gamma_{0}\in\mathbb{C}^{\ast} \ \backslash \ \{1\}$ \\ \hline
    

 \end{longtable} 
}
{\footnotesize
\begin{longtable}{|l|l|}
 \caption{Non-degenerations between nilpotent Jordan superalgebras of type $(2,3)$.}\label{table:ndeg23}\\
\hline  $J\nrightarrow J^{\prime} $  & Reason\\ \hline 

$3,21\nrightarrow4;\,\,\,\, 5,8,17,26\nrightarrow11,13;\,\,\,\, 7,9,36,37\nrightarrow11,13,16;$&\multirow{11}{*}{$F(J)\nrightarrow F(J^{\prime})$} \\ 
$10\nrightarrow 11-14,16,17,19;\,\,\,\, 14\nrightarrow11,16;$& \\
$18\nrightarrow 11-14,19;$&\\
$24\nrightarrow 4,5,6,8,11,13,16,25,26,35;$& \\
$27,29\nrightarrow11,13,16,35;$&\\
$28,30\nrightarrow11-14,16,17,19,32,35,36,37,39,42;$&\\
$33\nrightarrow11,12,15,19,20,35-40,42;$&  \\  
$34\nrightarrow 11-14,19,39,42;$&\\
$38\nrightarrow11-14,16,17,19,32,39,42;$ &  \\ 
$40\nrightarrow13,16,17,19,32,42;$&\\
$41\nrightarrow13-20,32,34,42.$&\\  

  \hline
$3,6,11,13\nrightarrow21,23;\,\,\,\, 4\nrightarrow21;$&\multirow{4}{*}{$J_{0}\nrightarrow J_{0}^{\prime} $}\\
$5,8,16\nrightarrow21,22,23,25;$&\\
$7,9,12,14,17,19\nrightarrow 21,22,23,25,26,35;$&\\
$10,15,18,20 \nrightarrow 21-27,29,32,35,36,37,39,42.$&\\
\hline
$4,11,13\nrightarrow2;$&\multirow{11}{*}{$a(J)\nrightarrow a(J^{\prime})$}\\
$5,7,8,12,14,17,26,27,29,36,37\nrightarrow3;$&\\
$15\nrightarrow2,3,5-9,12,14,17;$& \\ 
$16\nrightarrow2,3,6;\,\,\,\, 18,20\nrightarrow 3,9;$&\\
$19\nrightarrow 2,3,5,6,8;\,\,\,\, 25\nrightarrow2,23;$&\\
$28,34,38,40\nrightarrow 3,9,24;$&\\
$32,39,42\nrightarrow2,3,5,6,8,22,23,26;$&\\
$33,41\nrightarrow3,9,10,24,30;$& \\  
$35\nrightarrow2,3,6,22,23.$&\\
\hline 
$7,17,18,27,28,33,34,37\nrightarrow8.$&\multirow{1}{*}{$p(y_1,y_2,y_3)=(y_1,y_2)y_3$}\\ 
\hline
$12,29,36,37\nrightarrow 5;\,\,\,\, 38,40\nrightarrow5,7,27;$&\multirow{2}{*}{$p(x_1,y_1,y_2)=(x_{1}y_{1})y_{2}$}\\
$41\nrightarrow5,7,27,28.$&\\
\hline
$12,39\nrightarrow13,16;\,\,\,\, 31^{\lambda}\nrightarrow14-20,32,33,34,42,{43}^{\gamma};$&\multirow{3}{*}{$\mathrm{dim}(J^{2})_{1}< \mathrm{dim}({J^{\prime}}^{2})_{1}$}\\
$35\nrightarrow13;\,\,\,\, 41\nrightarrow{43}^{\gamma},44^{\phi}.$&\\
\hline
$16\nrightarrow 11,13;\,\,\,\, {31}^{\lambda}\nrightarrow11,12,13,35-41,44^{\phi};$ & \multirow{2}{*}{$\mathrm{dim}(\mathrm{Ann}(J))_{0}> \mathrm{dim}(\mathrm{Ann}(J^{\prime}))_{0}$}\\
$32\nrightarrow11,13,35;\,\,\,\, 33\nrightarrow13,14,{43}^{\gamma},44^{\phi};\,\,\,\,34\nrightarrow35.$&\\
\hline
$20,34\nrightarrow7;\,\,\,\, 35\nrightarrow11;\,\,\,\, 41\nrightarrow 31^{\lambda};\,\,\,\,42\nrightarrow11,16;$& \multirow{2}{*}{$\mathrm{dim}(\mathrm{Ann}(J))_{1}> \mathrm{dim}(\mathrm{Ann}(J^{\prime}))_{1}$}\\ 
${44}^{\phi}\nrightarrow 3,7,9,10,18,24,28,30,31^{\lambda},33.$& \\
\hline
$26,27\nrightarrow22;\,\,\,\, 28,33\nrightarrow22,29;\,\,\,\, 34\nrightarrow36;$&\multirow{2}{*}{$ \mathrm{nilindex}(J) < \mathrm{nilindex}(J^{\prime}) $}\\
${43}^{\gamma}\nrightarrow9,14,22,24,29,37; \,\,\,\, {44}^{\phi}\nrightarrow 38,41.$&\\
\hline

$30\nrightarrow7.$ & $\mathcal{R} = \left\{ 
\begin{array}{l}
\rho_{1j}^{k}=\rho_{2j}^{k}=\Gamma_{23}^{l}=0,\\
j\in\{2,3\}, \ k\in\{1,2,3\} \ \text{and} \ l\in \{1,2\}
\end{array}
\right\}$\\ \hline

$33\nrightarrow29,31^{\lambda};\,\,\,\, 34,40,44^{\phi}\nrightarrow22,29,37;\,\,\,\, 36\nrightarrow22.$&$\mathrm{dim}(J^{2})_{0}< \mathrm{dim}({J^{\prime}}^{2})_{0}$ \\ \hline
   
${43}^{\gamma}\nrightarrow 2,3,5-8,12,16,17,23,27,36.$& $J$ is a Jordan algebra, while $J^{\prime}$ is not. \\ \hline
${44}^{\phi}\nrightarrow15.$&$\mathrm{dim}(\mathrm{Z}(J))_{0}\leq \mathrm{dim}(\mathrm{Z}(J^{\prime}))_{0}$\\ \hline

\end{longtable}
    
}

\begin{figure}[H]
\centering
\includegraphics[scale=0.2]{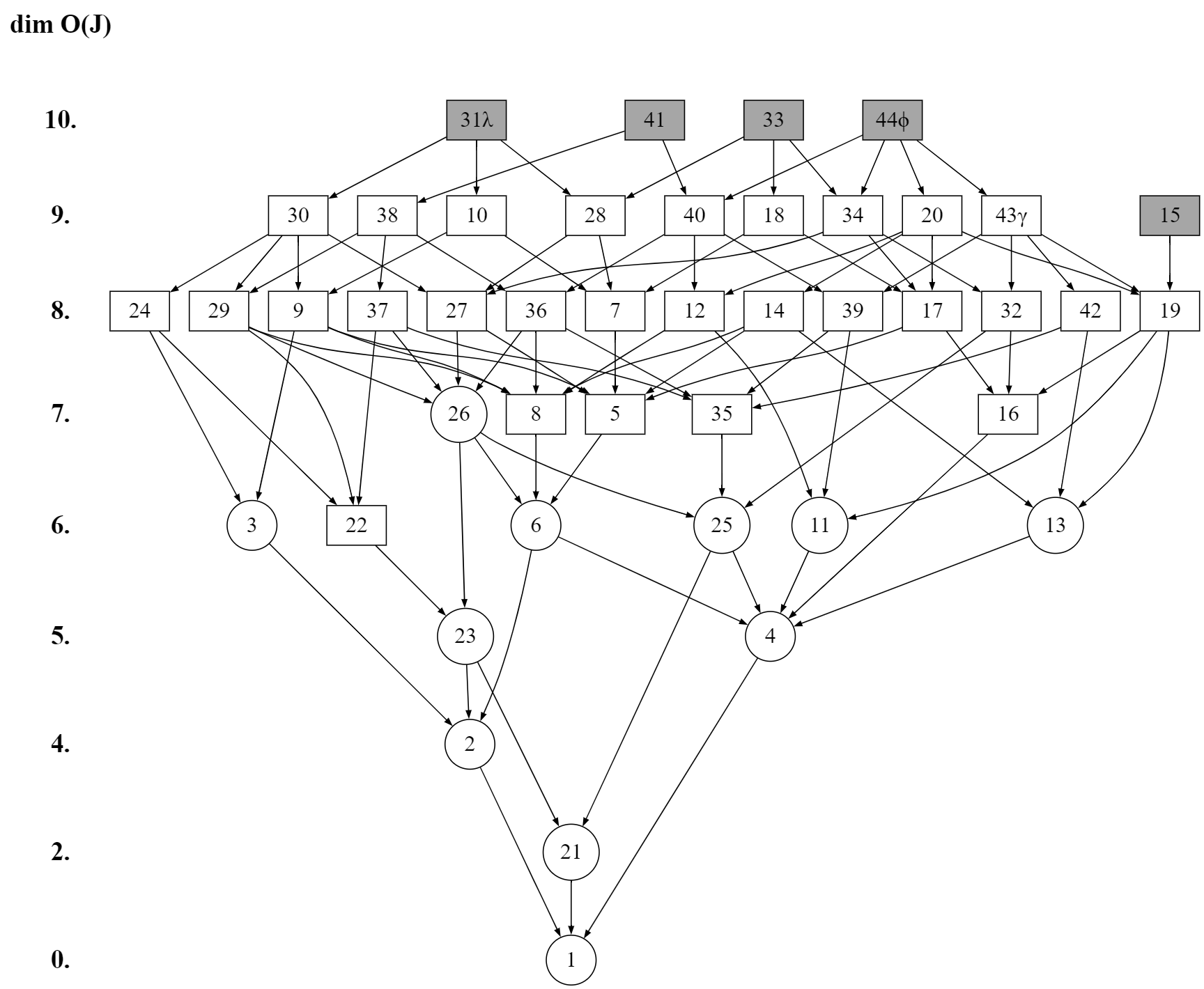}
\caption{Hasse diagram of degenerations for nilpotent Jordan superalgebras of type $(2,3)$.}
\label{fig:Hasse23}
\end{figure}

\end{document}